\newcommand{\E}[1]{\ensuremath{\mathbb{E} \! \left[#1 \right]}}
\newcommand{\Prob}[1]{\ensuremath{\mathbb{P} \! \left(#1 \right)}}
\newcommand{\I}[1]{\ensuremath{\mathbbm{1}_{ \{ #1 \} }}}
\newcommand{\R}{\ensuremath{\mathbb{R}}}
\newcommand{\N}{\ensuremath{\mathbb{N}}}
\newcommand{\fl}[1]{\ensuremath{\lfloor #1 \rfloor}}
\renewcommand{\subset}{\subseteq}
\newcommand{\convdist}{\ensuremath{\stackrel{d}{\rightarrow}}}
\newcommand{\equidist}{\ensuremath{\stackrel{d}{=}}}
\newcommand{\argmax}{\ensuremath{\operatornamewithlimits{argmax}}}
\newcommand{\Sfl}{\ensuremath{\mathcal{S}^{\downarrow}_1}}
\newcommand{\Sflfin}{\ensuremath{\mathcal{S}^{\downarrow}}}
\newcommand{\convlaw}{\ensuremath{\stackrel{d}{\rightarrow}}}
\renewcommand{\epsilon}{\varepsilon}
\newtheorem{theorem}{Theorem}[section]
\newtheorem{definition}[theorem]{Definition}
\newtheorem{proposition}[theorem]{Proposition}
\newtheorem{corollary}[theorem]{Corollary}
\newtheorem{lemma}[theorem]{Lemma}
\renewenvironment{proof}[1][\proofname]{\par
  \pushQED{\qed}%
  \normalfont \topsep6\p@\@plus6\p@\relax
  \trivlist
  \item[\hskip\labelsep
        \bf
    #1\@addpunct{.}]\ignorespaces
}{%
  \popQED\endtrivlist\@endpefalse
}
\numberwithin{equation}{section}
\date{}
\begin{document}

\title{Behavior near the extinction time in self-similar
  fragmentations I: the stable case} \author{Christina Goldschmidt
  \thanks{Department of Statistics, University of Oxford;
    \texttt{goldschm@stats.ox.ac.uk}} \and B\'en\'edicte Haas
  \thanks{Universit\'e Paris-Dauphine, Ceremade, F-75016 Paris, France;
    \texttt{haas@ceremade.dauphine.fr}}}

\maketitle


\vspace{-0.5cm}
\begin{abstract}
  \noindent The stable fragmentation with index of self-similarity
  $\alpha \in [-1/2,0)$ is derived by looking at the masses of the
  subtrees formed by discarding the parts of a $(1 +
  \alpha)^{-1}$--stable continuum random tree below height $t$, for $t
  \geq 0$.  We give a detailed limiting description of the
  distribution of such a fragmentation, $(F(t), t \geq 0)$, as it
  approaches its time of extinction, $\zeta$.  In particular, we show
  that $t^{1/\alpha}F((\zeta - t)^+)$ converges in distribution as $t
  \to 0$ to a non-trivial limit.  In order to prove this, we go
  further and describe the limiting behavior of (a) an excursion of
  the stable height process (conditioned to have length 1) as it
  approaches its maximum; (b) the collection of open intervals where
  the excursion is above a certain level and (c) the ranked sequence
  of lengths of these intervals.  Our principal tool is excursion
  theory.  We also consider the last fragment to disappear and show
  that, with the same time and space scalings, it has a limiting
  distribution given in terms of a certain size-biased version of the
  law of $\zeta$.

  In addition, we prove that the logarithms of the sizes of the
  largest fragment and last fragment to disappear, at time $(\zeta -
  t)^+$, rescaled by $\log(t)$, converge almost surely to the constant
  $-1/\alpha$ as $t \to 0$.
\end{abstract}

\renewcommand{\abstractname}{R\'esum\'e}
\begin{abstract}
  \noindent La fragmentation stable d'incice $\alpha \in [-1/2,0)$ est
  construite \`a partir des masses des sous-arbres de l'arbre continu
  al\'eatoire stable d'indice $(1+\alpha)^{-1}$ obtenus en ne gardant
  que les feuilles situ\'ees \`a une hauteur sup\'erieure \`a $t$, pour
  $t \geq 0$. Nous donnons une description d\'etaill\'ee du
  comportement asymptotique d'une telle fragmentation, $(F(t), t \geq
  0)$, au voisinage de son point d'extinction, $\zeta$. En
  particulier, nous montrons que $t^{1/\alpha} F((\zeta-t)^+)$
  converge en loi lorsque $t \rightarrow 0$ vers une limite non
  triviale. Pour obtenir ce r\'esultat, nous allons plus loin et
  d\'ecrivons le comportement asymptotique en loi, apr\`es
  normalisation, (a) d'une excursion du processus de hauteur stable
  (conditionn\'ee \`a avoir une longueur $1$) au voisinage de son
  maximum; (b) des intervalles ouverts o\`u l'excursion est au-dessus
  d'un certain niveau; et (c) de la suite d\'ecroissante des longueurs
  de ces intervalles. Notre outil principal est la th\'eorie des
  excursions. Nous nous int\'eressons \'egalement au dernier fragment
  \`a dispara\^itre et montrons, qu'avec les m\^emes normalisations en
  temps et espace, la masse de ce fragment a une distribution limite
  contruite \`a partir d'une certaine version biais\'ee de $\zeta$.
 
  Enfin, nous montrons que les logarithmes des masses du plus gros
  fragment et du dernier fragment \`a dispara\^itre, au temps
  $(\zeta-t)^+$, divis\'es par $\log(t)$, convergent presque
  s\^urement vers la constante $-1/\alpha$ lorsque $t \rightarrow 0$.
\end{abstract}

\noindent \emph{AMS subject classifications: 60G18, 60G52, 60J25 
\newline Keywords: stable L\'evy processes, height processes,
self-similar fragmentations, extinction time, scaling limits.}


\section{Introduction}

The subject of this paper is a class of random fragmentation processes
which were introduced by Bertoin~\cite{BertoinSSF}, called the
self-similar fragmentations.  In fact, we will find it convenient to
have two slightly different notions of a fragmentation process.  By an
\emph{interval fragmentation}, we mean a process $(O(t), t \geq 0)$
taking values in the space of open subsets of $(0,1)$ such that $O(t)
\subset O(s)$ whenever $0 \leq s \leq t$.  We refer to a connected
interval component of $O(t)$ as a \emph{block}.  Let $F(t) = (F_1(t),
F_2(t), \ldots)$ be an ordered list of the lengths of the blocks of
$O(t)$.  Then $F(t)$ takes values in the space 
\[
\Sfl = \left\{\mathbf{s} = (s_1, s_2, \ldots): s_1 \geq s_2 \geq \ldots
\geq 0, \sum_{i=1}^{\infty} s_i \leq 1\right\}. 
\]
We call the process $(F(t), t \geq 0)$ a \emph{ranked fragmentation}.
A ranked fragmentation is called \emph{self-similar with index $\alpha
\in \R$} if it is a time-homogeneous Markov process which satisfies
certain \emph{branching} and \emph{self-similarity} properties.
Roughly speaking, these mean that every block should split into a
collection of sub-blocks whose relative lengths always have the same
distribution, but at a rate which is proportional to the length of the
original block raised to the power $\alpha$.  (Rigorous definitions
will be given in Section~\ref{sec:preliminaries}.)  Clearly the sign
of $\alpha$ has a significant effect on the behavior of the process.
If $\alpha > 0$ then larger blocks split faster than smaller ones,
which tends to act to balance out block sizes.  On the other hand, if
$\alpha < 0$ then it is the smaller blocks which split faster.
Indeed, small blocks split faster and faster until they are reduced to
\emph{dust}, that is blocks of size 0.

The asymptotic behavior of self-similar fragmentations has been
studied quite extensively.  In one sense, it is trivial, in that $F(t)
\to 0$ a.s.\ as $t \to \infty$, whatever the value of the index
$\alpha$ (provided the process is not trivially constant, i.e.\ equal
to its initial value for all times $t$).  For $\alpha \geq 0$,
rescaled versions of the empirical measures of the lengths of the
blocks have law of large numbers-type behavior (see
Bertoin~\cite{BertoinAB} and Bertoin and
Rouault~\cite{Bertoin/Rouault}).  For $\alpha < 0$, however, the
situation is completely different.  Here, there exists an almost
surely finite random time $\zeta$, called the \emph{extinction time},
when the state is entirely reduced to dust.  The manner in which mass
is lost has been studied in detail in \cite{HaasLossMass} and
\cite{HaasRegularity}.

The purpose of this article is to investigate the following more
detailed question when $\alpha$ is negative: \textbf{how does
  the process $F((\zeta-t)^+)$ behave as $t \to 0$ ?}  We provide a
detailed answer for a particularly nice one-parameter family of
self-similar fragmentations with negative index, called the
\emph{stable fragmentations}.

The simplest of the stable fragmentations is the \emph{Brownian
  fragmentation}, which was first introduced and studied by
Bertoin~\cite{BertoinSSF}.  Suppose that $(\mathbf{e}(x), 0 \leq x \leq 1)$ is
a standard Brownian excursion.  Consider, for $t \geq 0$ the sets
\[
O(t) = \{x \in [0,1]: \mathbf{e}(x) > t\}
\]
and let $F(t) \in \Sfl$ be the lengths of the interval components of
$O(t)$ in decreasing order.  Then it can be shown that $(F(t), t \geq
0)$ is a self-similar fragmentation with index $-1/2$.
Miermont~\cite{Miermont} generalized this construction by replacing
the Brownian excursion with an excursion of the height process
associated with the stable tree of index $\beta \in (1,2)$, introduced
and studied by Duquesne, Le Gall and Le Jan
\cite{Duquesne/LeGall,LeGall/LeJan}.  The corresponding process is a
self-similar fragmentation of index $\alpha = 1/\beta - 1$.

The behavior near the extinction time in the Brownian fragmentation
can be obtained via a decomposition of the excursion at its maximum.
We discuss this case in Section~\ref{sec:Brownian}.  Abraham and
Delmas~\cite{Abraham/Delmas} have recently proved a generalized
Williams' decomposition for the excursions which code stable trees.
This provides us with the necessary tools to give a complete
description of the behavior of the stable fragmentations near their
extinction time, which is detailed in Section~\ref{sec:stable}.  In
every case, we obtain that
\[
t^{1/\alpha}F((\zeta-t)^{+}) \convlaw F_{\infty} \text{ as }t \rightarrow 0,
\]
where $F_{\infty}$ is a random limit which takes values in the
set of non-increasing non-negative sequences with finite sum. The
limit $F_{\infty}$ is constructed from a self-similar function
$H_{\infty}$ on $\R$, which itself arises when looking at the scaling
behavior of the excursion in the neighborhood of its maximum.  See
Theorems \ref{thm:stablefragfonc} and \ref{thm:stablefrag} and
Corollary \ref{corostablerank} for precise statements.

In Corollary \ref{lastfrag}, we also consider the process of the
\emph{last fragment}, that is the size of the (as it turns out unique)
block which is the last to disappear.  We call this size $F_{\ast}(t)$ and prove that, scaled as
before, $F_{\ast}((\zeta-t)^+)$ also has a limit in distribution as $t \to 0$ which,
remarkably, can be expressed in terms of a certain size-biased version
of the distribution of $\zeta$.

Sections~\ref{sec:technicalbackground}, \ref{sec:convheightprocess},
\ref{sec:proof} and \ref{seclastfrag} are devoted to the proofs of
these results.

Finally, in Section~\ref{SectionLog}, we consider the logarithms of
the largest and last fragments and show that
\[
\frac{\log F_1((\zeta - t)^+)}{\log(t)} \to -1/\alpha \quad
\text{and} \quad \frac{\log F_{\ast}((\zeta - t)^+)}{\log(t)} \to -1/\alpha
\]
almost surely as $t \to 0$.  In fact, these results hold for a more
general class of self-similar fragmentations with negative index.

We will investigate the limiting behavior of $F((\zeta -t)^{+})$ as $t
\to 0$ for general self-similar fragmentations with negative index
$\alpha$ in future work, starting in \cite{Goldschmidt/Haas2}.  In
general, as indicated by the results for the logarithms of $F_1$ and
$F_{\ast}$ above, the natural conjecture is that $t^{1/\alpha}$ is the
correct re-scaling for non-trivial limiting behavior.  However, since
the excursion theory tools we use here are not available in general,
we are led to develop other methods of analysis.


\section{Self-similar fragmentations}
\label{sec:preliminaries}

Define $\mathcal{O}_{(0,1)}$ to be the set of open subsets of $(0,1)$.
We begin with a rather intuitive notion of a fragmentation process.

\begin{definition}[Interval fragmentation]
  An interval fragmentation is a process $(O(t), t \geq 0)$ taking
  values in $\mathcal{O}_{(0,1)}$ such that $O(t) \subset O(s)$ whenever $0
  \leq s \leq t$.
\end{definition}

In this paper we will be dealing with interval fragmentations which
derive from excursions.  Here, an \emph{excursion} is a continuous
function $f:[0,1] \to \R^+$ such that $f(0) = f(1) = 0$ and $f(x) > 0$
for all $x \in (0,1)$.  The associated interval fragmentation, $(O(t),
t \geq 0)$ is defined as follows: for each $t \geq 0$,
\[
O(t) = \{x \in [0,1]: f(x) > t\}.
\]
An example is given in Figure~\ref{fig:schematic}.

\begin{figure}
\begin{center}
\psfrag{t}{$t$}
\psfrag{0}{0}
\psfrag{1}{1}
\includegraphics{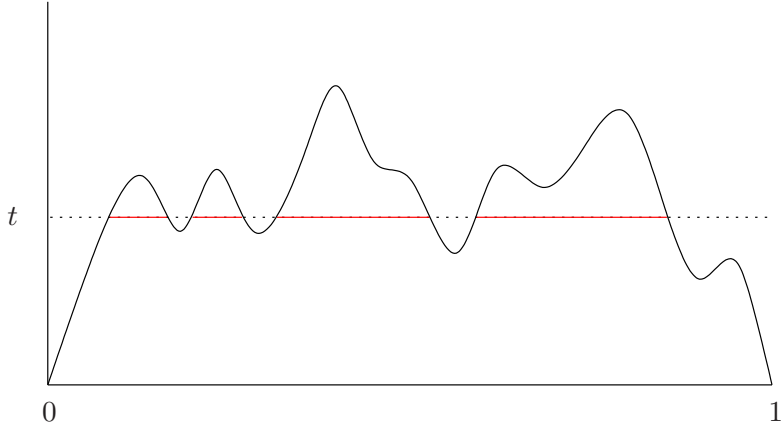}
\end{center}
\caption{An interval fragmentation derived from a continuous
  excursion: the set $O(t)$ is represented by the solid lines at level
  $t$.}
\label{fig:schematic}
\end{figure}

We need to introduce a second notion of a fragmentation process.  We
endow the space $\Sfl$ with the topology of pointwise convergence.

\begin{definition}[Ranked self-similar fragmentation]
  A ranked self-similar fragmentation $(F(t), t \geq 0)$ with index
  $\alpha \in \R$ is a c\`adl\`ag Markov process taking values in
  $\Sfl$ such that
\begin{itemize}
\item $(F(t), t \geq 0)$ is continuous in probability;
\item $F(0) = (1,0,0, \ldots)$;
\item Conditionally on $F(t) = (x_1, x_2, \ldots)$, $F(t+s)$ has the
  law of the decreasing rearrangement of the sequences $x_i
  F^{(i)}(x_i^{\alpha} s)$, $i \geq 1$, where $F^{(1)}, F^{(2)},
  \ldots$ are independent copies of the original process $F$.
\end{itemize}
\end{definition}

Let $r: \mathcal{O}_{(0,1)} \to \Sfl$ be the function which to an open
set $O \subset (0,1)$ associates the ranked sequence of the lengths of
its interval components.  We say that an interval fragmentation is
\emph{self-similar} if it possesses branching and self-similarity
properties which entail that $(r(O(t)), t \geq 0)$ is a ranked
self-similar fragmentation.  See \cite{Basdevant,BertoinSSF,BertoinBook} for
this and further background material.

Bertoin~\cite{BertoinSSF} has proved that a ranked self-similar
fragmentation can be characterized by three parameters, $(\alpha, \nu,
c)$.  Here, $\alpha \in \R$; $\nu$ is a measure on $\Sfl$ such that
$\nu((1,0,\ldots)) = 0$ and $\int_{\Sfl} (1 - s_1) \nu(\mathrm{d}\mathbf{s}) <
\infty$; and $c \in \R^+$.  The parameter $\alpha$ is the \emph{index
  of self-similarity}.  The measure $\nu$ is the \emph{dislocation
  measure}, which describes the way in which fragments suddenly
dislocate; heuristically, a block of mass $m$ splits at rate
$m^{\alpha} \nu(\mathrm{d}\mathbf{s})$ into blocks of masses $(ms_1, ms_2,
\ldots)$.  The real number $c$ is the \emph{erosion coefficient},
which describes the rate at which blocks continuously melt.  Note that
$\nu$ may be an infinite measure, in which case the times at which
dislocations occur form a dense subset of $\R^+$.  When $c=0$, the
fragmentation is a pure jump process.

In the context of an interval fragmentation derived from an excursion,
it is easy to see that the extinction time of the fragmentation is
just the maximum height of the excursion:
\[
\zeta = \max_{0 \leq x \leq 1} f(x).
\]
In the examples we treat, this maximum will be attained at a unique
point, $x_*$.  In this case, let $O_*(t)$ be the interval component of
$O(t)$ containing $x_*$ at time $t$, and let $F_{\ast}(t)$ be its
length, i.e.\ $F_*(t) = |O_*(t)|$.  We call both $O_*$ and $F_*$ the
\emph{last fragment process}.


\section{The Brownian fragmentation}
\label{sec:Brownian}

We begin by discussing the special case of the Brownian fragmentation.
The sketch proofs in this section are not rigorous, but can be made
so, as we will demonstrate later in the paper.  Our intention is to
introduce the principal ideas in a framework which is familiar to the
reader.

Let $(\mathbf{e}(x), 0 \leq x \leq 1)$ be a normalized Brownian
excursion with length 1 and for each $t \geq 0$ define the associated
interval fragmentation by
\begin{equation*}
O(t):=\left \{x \in [0,1] : \mathbf{e}(x)>t \right\}.
\end{equation*}
See Figure~\ref{fig:Brownianfrag} for a picture.
\begin{figure}
\begin{center}
\psfrag{t}{$t$}
\psfrag{z}{$\zeta$}
\psfrag{0.0}{0.0}
\psfrag{0.2}{0.2}
\psfrag{0.4}{0.4}
\psfrag{0.6}{0.6}
\psfrag{0.8}{0.8}
\psfrag{1.0}{1.0}
\psfrag{1.2}{1.2}
\rotatebox{270}{\resizebox{10cm}{!}{\includegraphics{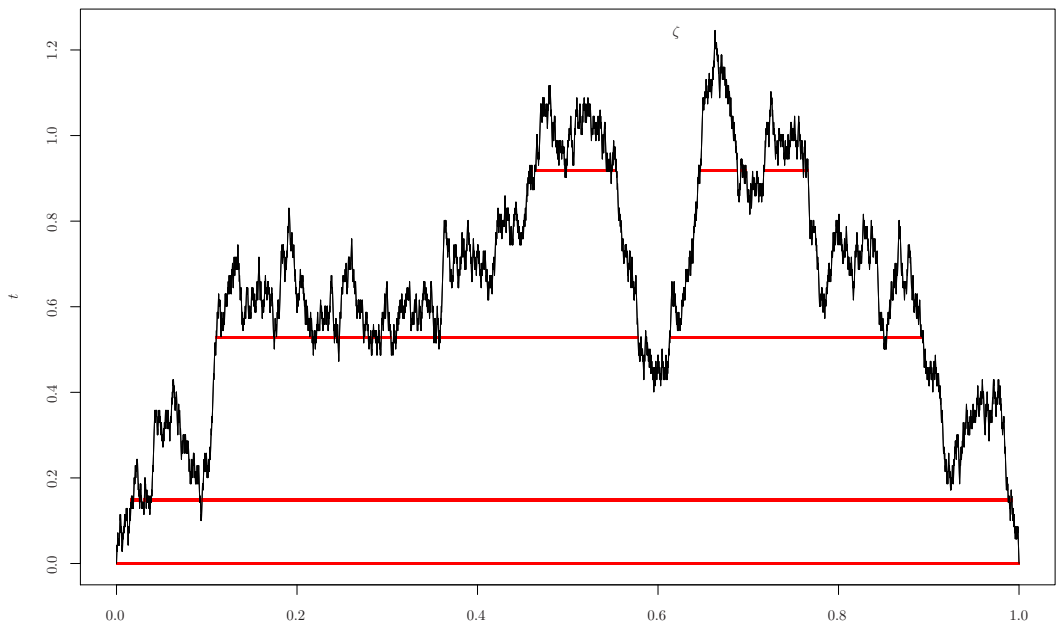}}}
\caption{The Brownian interval fragmentation with the open intervals
  which constitute the state at times $t = 0, 0.15, 0.53$ and $0.92$
  indicated.}
\label{fig:Brownianfrag}
\end{center}
\end{figure}
The associated ranked fragmentation process, $(F(t), t \geq 0)$, has
index of self-similarity $\alpha=-1/2$, binary dislocation measure
specified by $\nu(s_1 + s_2 < 1) = 0$ and 
\[
\nu(s_1 \in \mathrm dx) = 2(2 \pi x^3 (1-x)^3)^{-1/2}
\mathbbm{1}_{[1/2,1)}(x) \mathrm dx, 
\]
and erosion coefficient $0$.  See Bertoin
\cite{BertoinSSF} for a proof.  The extinction time of this
fragmentation process is the maximum of the Brownian excursion.  In
particular, from Kennedy~\cite{Kennedy} we have
\[
\Prob{\zeta > t} = 2 \sum_{n=1}^{\infty} (4t^2n^2 - 1) \exp(-2t^2
n^2), \text{  } t\geq 0.
\]
To the best of our knowledge, this is the only fragmentation for which
the law of $\zeta$ is known.  It is well known that the maximum of
$\mathbf{e}$ is reached at a unique point $x_{*} \in [0,1]$ a.s., and
so the mass, $F_{\ast}(t)$, of the last fragment to survive is
well-defined.

There is a complete characterization of the limit in law of the
rescaled fragmentation near to its extinction time.

\begin{theorem}[Brownian fragmentation] \label{thm:Brownianfrag}
If $O$ is the Brownian interval fragmentation then
\begin{equation*}
\frac{O \left((\zeta -t \right)^{+})-x_{*}}{t^{2}}
\convlaw O_{\infty} \text{ as
$t\rightarrow 0$},
\end{equation*}
where $O_{\infty}=\{x\in
\mathbb{R}:H_{\infty}(x)<1\}$,
\begin{equation*}
H_{\infty}(x)=R_+(x)\I{x\geq 0}+ R_{-}(-x)\I{x<0}
\end{equation*}
and $R_+$ and $R_{-}$ are two independent $\mathrm{Bes}(3)$
processes.
\end{theorem}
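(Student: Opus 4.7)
\emph{Proof plan.} The natural tool is Williams' path decomposition of the normalized Brownian excursion at its maximum. Conditionally on $(x_*, \zeta)$, the two processes
\begin{equation*}
(\zeta - \mathbf{e}(x_* + u))_{0 \leq u \leq 1-x_*} \quad \text{and} \quad (\zeta - \mathbf{e}(x_* - u))_{0 \leq u \leq x_*}
\end{equation*}
are independent $\mathrm{Bes}(3)$ bridges from $0$ to $\zeta$, of respective lengths $1-x_*$ and $x_*$. Setting
\begin{equation*}
H_t(z) := t^{-1}\bigl(\zeta - \mathbf{e}(x_* + t^2 z)\bigr)
\end{equation*}
(defined for $z$ with $x_* + t^2 z \in [0,1]$), a simple change of variable gives
\begin{equation*}
\frac{O((\zeta - t)^+) - x_*}{t^2} = \{z \in \R : H_t(z) < 1\},
\end{equation*}
so that the task reduces to a functional limit theorem $H_t \convlaw H_\infty$ in a topology strong enough that the level-set functional is a.s.\ continuous.

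The core of the argument is then a scaling computation. Brownian scaling of $\mathrm{Bes}(3)$ yields $(t^{-1} R(t^2 z))_{z \geq 0} \equidist (R(z))_{z \geq 0}$, and a $\mathrm{Bes}(3)$ bridge of length $L$ from $0$ to $\zeta$ is mutually absolutely continuous with respect to the $\mathrm{Bes}(3)$ law on any sub-interval $[0, L-\delta]$, $0 < \delta < L$. Combining these two facts with Williams' decomposition, the restriction of $H_t$ to any compact $[-K, K]$ converges in distribution, uniformly, to the restriction of $H_\infty$. The conditional independence of the left and right halves in Williams' decomposition is preserved in the limit, so $R_+$ and $R_-$ automatically come out independent.

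The final step is to transfer this uniform-on-compacts convergence of $H_t$ into convergence of the open sets $\{H_t < 1\}$. Since $R_{\pm}$ are transient, $H_\infty(z) \to +\infty$ as $|z| \to \infty$, so $O_\infty$ is a.s.\ a bounded open subset of $\R$, and because $\mathrm{Bes}(3)$ has no flat stretch at a positive level, $H_\infty$ crosses the level $1$ transversally at finitely many points a.s. Consequently the level-set functional is continuous at $H_\infty$, and the continuous mapping theorem delivers the announced convergence in law. I expect the main technical obstacle to lie precisely in this last step: one must rule out the escape of mass at spatial infinity (controlling $\{H_t < 1\} \cap \{|z| \geq K\}$ uniformly in small $t$), which in turn requires a moderate-deviation type estimate on $\zeta - \mathbf{e}$ away from $x_*$; one must also fix a reasonable topology on open subsets of $\R$ (Hausdorff on compacts, or equivalently pointwise convergence of the ranked-length sequence in $\Sflfin$) in which the continuous mapping theorem is available.
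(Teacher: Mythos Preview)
Your proposal is correct and follows essentially the same route as the paper's sketch: Williams' decomposition of the excursion at its maximum, Brownian scaling, and convergence to two independent $\mathrm{Bes}(3)$ processes, followed by a continuity argument for the level-set map. The only cosmetic difference is that the paper phrases Williams' decomposition as ``the rearranged process $\tilde{\mathbf{e}}(x)=\zeta-\mathbf{e}(x_*+x \bmod 1)$ is again a normalized Brownian excursion'', then scales it to a long excursion whose two ends converge to independent $\mathrm{Bes}(3)$ processes, whereas you work directly with the two $\mathrm{Bes}(3)$ bridges via absolute continuity; the two formulations are equivalent. Your anticipation of the technical issues (topology on open sets, transversality at level $1$, tightness away from the origin) is exactly what the paper defers to the general stable case and handles rigorously there.
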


A full discussion of the topology in which the above convergence in
distribution occurs is deferred until Section
\ref{subsec:topology}.  A proof of this theorem is given in
Uribe Bravo~\cite{Uribe}.  We give here a sketch, since a rigorous proof
will be given in the wider setting of general stable fragmentations.

\begin{proof}[Sketch of proof.]
We decompose the excursion $\mathbf{e}$ at its maximum $\zeta$.
Define
\[
\tilde{\mathbf{e}}(x) 
= \begin{cases}
\zeta - \mathbf{e}(x_* + x), & 0 \leq x \leq 1-x_* \\
\zeta - \mathbf{e}(x - 1 + x_*), & 1-x_* < x \leq 1.
\end{cases}
\]
Then by Williams' decomposition for the Brownian excursion
\cite[Section VI.55]{Rogers/Williams2}, we have that
$\tilde{\mathbf{e}}$ is again a standard Brownian excursion.
Moreover, if $t < \zeta$ then 
\begin{align*}
& t^{-2}(O(\zeta-t) - x_*) \\
& \qquad = \{y \in [0,(1-x_*)t^{-2}]:
\tilde{\mathbf{e}}(yt^2) < t\} \cup \{y \in [-x_* t^{-2},0]:
\tilde{\mathbf{e}}(1 + yt^2) < t\}.
\end{align*}
Now by the scaling property of Brownian motion, $(t^{-1}
\tilde{\mathbf{e}}(xt^2), 0 \leq x \leq t^{-2})$ has the distribution of a
Brownian excursion of length $t^{-2}$, which we will denote $(b^{t}(x), 0
\leq x \leq t^{-2})$.  So the above set has the same distribution as
\[
\{x \in [0, (1-x_*)t^{-2}]: b^{t}(x) < 1\} \cup \{x \in
[-x_*t^{-2},0]: b^{t}(t^{-2} + x) < 1\}.
\]
Fix $n \in \R^+$.  As $t \to 0$, the length of the excursion goes to
$\infty$ and $(b^{t}(x), 0 \leq x \leq n) \convdist (R_+(x), 0 \leq x
\leq n)$, where $(R_+(x), x \geq 0)$ is a 3-dimensional Bessel process
started at 0 (see, for example, Theorem 0.1 of
Pitman~\cite{PitmanStFl}).  Moreover, by symmetry, $(b^{t}(t^{-2} -
x), 0 \leq x \leq n) \convdist (R_{-}(x), 0 \leq x \leq
n)$, where $R_{-}$ is (in fact) an independent copy of $R_+$.  Thus we
obtain
\[
t^{-2}(O((\zeta-t)^+) - x_*) \convdist O_{\infty},
\]
as claimed.
\end{proof}

Theorem~\ref{thm:Brownianfrag} enables us to deduce an explicit limit
law for the associated ranked fragmentation. See Corollary
\ref{corostablerank} for a precise statement and note that, as
detailed at the end of Section \ref{sec:stable}, the passage from the
convergence of open sets to that of these ranked sequences is not
immediate.  We also have the following limit law for the last
fragment, $F_{\ast}$.

\begin{corollary}
If $F$ is the Brownian fragmentation then
\begin{equation*}
\frac{F_{\ast }((\zeta - t)^{+})}{t^{2}} \convlaw \left(\frac{2\zeta
  }{\pi }\right) ^{2} \text{ as $t \to 0$,}
\end{equation*}
or, equivalently,
\begin{equation*}
t (F_{\ast} ( (\zeta - t)^{+} ) )^{-1/2}
\convlaw \zeta_{\ast},
\end{equation*}
where $\zeta _{\ast}$ is a size-biased version of $\zeta$, i.e.\
$\E{f(\zeta_{\ast})} = \E{\zeta f(\zeta)}/ \E{\zeta}$ for every
test function $f$.
\end{corollary}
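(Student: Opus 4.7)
The plan is to deduce the result from Theorem~\ref{thm:Brownianfrag} by identifying the length of the connected component of $O_\infty$ containing the origin. By definition, $F_*((\zeta - t)^+)$ is the length of the interval component of $O((\zeta-t)^+)$ containing $x_*$. Under the space rescaling by $t^{-2}$ of Theorem~\ref{thm:Brownianfrag}, this corresponds to the length of the component of $O_\infty$ containing $0$, namely the open interval $(-T_-, T_+)$, where $T_{\pm} := \inf\{x \geq 0 : R_\pm(x) \geq 1\}$; since $R_+$ and $R_-$ are independent $\mathrm{Bes}(3)$ processes started at $0$, $T_+$ and $T_-$ are i.i.d. A continuity check is needed in order to apply the continuous mapping theorem: the functional ``length of the component of $O$ through $0$'' is continuous at $O_\infty$ almost surely, because a $\mathrm{Bes}(3)$ process crosses any positive level transversally at its first hitting time, so small perturbations of the path produce only small perturbations of $T_\pm$. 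Granting this, $F_*((\zeta-t)^+)/t^2 \convlaw L := T_+ + T_-$.

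The principal step is then to identify $L \equidist (2\zeta/\pi)^2$. The Laplace transform of $L$ follows from the classical $\mathrm{Bes}(3)$ hitting-time formula $\E{e^{-\lambda T_1}} = \sqrt{2\lambda}/\sinh(\sqrt{2\lambda})$, which gives $\E{e^{-\lambda L}} = 2\lambda/\sinh^2(\sqrt{2\lambda})$. To match this against the Laplace transform of $(2\zeta/\pi)^2$, the cleanest route uses Williams' decomposition under the It\^o excursion measure $\mathbf{n}$ of Brownian motion: conditionally on the maximum $M = m$, the excursion consists of two independent $\mathrm{Bes}(3)$ paths started at $0$ and absorbed at $m$, so by $\mathrm{Bes}(3)$ scaling its length given $M = m$ is distributed as $m^2 L$. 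Combined with the It\^o-measure identities $\mathbf{n}(M \in \mathrm dm) = \mathrm dm/(2m^2)$ and $\mathbf{n}(\ell \in \mathrm d\ell) = \mathrm d\ell/(2\sqrt{2\pi \ell^3})$, Bayes' formula yields
\[
f_\zeta(m) = \sqrt{2\pi}\, m^{-4} f_L(m^{-2}).
\]
A change of variables then identifies $L \equidist (2\zeta/\pi)^2$, using either the explicit Laplace transform above or Kennedy's series for $\Prob{\zeta > t}$ to pin down one of the two densities.

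The equivalent second form of the statement amounts to the identity $\zeta_* \equidist \pi/(2\zeta)$, which is immediate from the density relation derived above: the size-biased density $f_{\zeta_*}(m) = m f_\zeta(m)/\E{\zeta} = 2 m^{-3} f_L(m^{-2})$ (using $\E{\zeta} = \sqrt{\pi/2}$) is exactly the density of $L^{-1/2}$, so $\zeta_* \equidist L^{-1/2} \equidist \pi/(2\zeta)$ by the main identification, and the continuous mapping theorem applied to $x \mapsto x^{-1/2}$ closes the argument. The hardest part of the proof is the density/Laplace-transform matching inside the identification of $L$: it is essentially a manifestation of the modular transformation of the Jacobi theta function, and is the reason one cannot avoid invoking a nontrivial classical identity about the Brownian excursion maximum somewhere in the argument.
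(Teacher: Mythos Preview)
Your argument is correct and follows the same logical skeleton as the paper's proof: deduce from Theorem~\ref{thm:Brownianfrag} that $t^{-2}F_*((\zeta-t)^+)\convlaw T_+ + T_-$ with $T_\pm$ independent $\mathrm{Bes}(3)$ hitting times of level~1, then identify this sum in law with $(2\zeta/\pi)^2$ and read off the size-biased reformulation.

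The only substantive difference is in how the identification is established. The paper dispatches both the distributional identity $T_+ + T_- \equidist (2\zeta/\pi)^2$ and the self-reciprocity relation $\E{f(1/Y)}=\E{Yf(Y)}$ for $Y=\sqrt{2/\pi}\,\zeta$ by a direct citation of Proposition~2.1 of Biane, Pitman and Yor. You instead rebuild these identities from first principles: Williams' decomposition under the It\^o measure gives the conditional law of the excursion length given its maximum as $m^2L$, and combining this with the known It\^o densities of the maximum and of the length produces the density relation $f_\zeta(m)=\sqrt{2\pi}\,m^{-4}f_L(m^{-2})$. From there you extract both conclusions by a change of variables, after anchoring one of the two distributions via the explicit Laplace transform $\E{e^{-\lambda L}}=2\lambda/\sinh^2\sqrt{2\lambda}$ (or Kennedy's series). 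Your remark that a nontrivial theta-type identity is unavoidable here is exactly right, and is precisely what the Biane--Pitman--Yor reference packages up. Your route is more self-contained but longer; the paper's is a two-line appeal to the literature.

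One minor point: the paper does not pause over the continuity of the ``length of the component through $0$'' functional in the Brownian sketch (this section is explicitly heuristic), whereas you correctly flag that transversality of $\mathrm{Bes}(3)$ at its first hitting time is what makes the continuous mapping theorem go through. The rigorous version of this step is handled later in the paper for the general stable case.
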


\begin{proof}
Let $T_+ = \inf\{t \geq 0: R_+(t) = 1\}$ and $T_{-} = \inf\{t
\geq 0: R_{-}(t) = 1\}$, where $R_+$ and $R_{-}$ are
the independent $\mathrm{Bes(3)}$ processes from the statement of
Theorem~\ref{thm:Brownianfrag}.  Then by
Theorem~\ref{thm:Brownianfrag},
\[
\frac{F_{\ast}((\zeta - t)^{+})}{t^{2}} \convlaw T_+ + T_{-}.
\]
By Proposition 2.1 of Biane, Pitman and Yor~\cite{Biane/Pitman/Yor},
\[
T_+ + T_{-} \equidist \left(\frac{2 \zeta}{\pi}\right)^2
\]
and, moreover, if we define $Y = \sqrt{\frac{2}{\pi}} \zeta$ then
$Y$ satisfies
\[
\E{f(1/Y)} = \E{Y f(Y)}
\]
for any test function $f$ (in particular, $Y$ has mean 1). Hence,
\[
\E{f\left(\frac{\pi}{2 \zeta}\right)} = \sqrt{\frac{2}{\pi}}
\E{\zeta f(\zeta)} = \E{\zeta f(\zeta)} / \E{\zeta},
\]
which completes the proof.
\end{proof}

\textbf{Remark 1.}  As noted by Uribe Bravo~\cite{Uribe}, the random
variable $(2 \zeta / \pi)^2$ has Laplace transform $2 \lambda(\sinh
\sqrt{2 \lambda})^{-2}$.  He also uses another result in Biane, Pitman
and Yor~\cite{Biane/Pitman/Yor} to show that the Lebesgue measure of
the set $O_{\infty}$ has Laplace transform $(\cosh \sqrt{2
  \lambda})^{-2}$.  Let $M(t)$ be the total mass of the fragmentation
at time $t$, that is the Lebesgue measure of $O(t)$.  Then this
entails that
\[
\frac{M((\zeta-t)^+)}{t^2} \convdist M_{\infty},
\]
where $M_{\infty}$ has Laplace transform $(\cosh \sqrt{2 \lambda})^{-2}$.

\textbf{Remark 2.} The Bes(3) process encodes a fragmentation process
with immigration which arises naturally when studying rescaled
versions of the Brownian fragmentation near $t=0$ (see Haas
\cite{HaasAsympFrag}).  This is closely related to our approach: using
Williams' decomposition of the Brownian excursion, we obtain results
on the behavior of the fragmentation near its extinction time by
studying the sets of $\{x\in[0,1]: \mathbf{e}(x)<t\}$ for small $t$. This
duality between the behavior of the fragmentation near $0$ and near
its extinction time seems to be specific to the Brownian case.


\section{General stable fragmentations}
\label{sec:stable}

There is a natural family which generalizes the Brownian
fragmentation: the \emph{stable fragmentations}, constructed and
studied by Miermont in \cite{Miermont}. The starting point is the
stable height processes $H$ with index $1<\beta\leq2$ which were
introduced by Duquesne, Le Gall and Le Jan
\cite{Duquesne/LeGall,LeGall/LeJan} in order to code the genealogy of
continuous state branching processes with branching mechanism $\lambda^{\beta}$ via stable trees. We do
not give a definition of these processes here, since it is rather
involved; full definitions will be given in the course of the next
section.  Here, we simply recall that it is possible to consider a
normalized excursion of $H$, say $\mathbf{e}$, which is almost surely
continuous on $[0,1]$. When $\beta=2$, this is the normalized Brownian
excursion (up to a scaling factor of $\sqrt{2}$).

Once again, let
\begin{equation*}
O(t):=\left \{x \in [0,1] : \mathbf{e}(x)>t \right\}.
\end{equation*}
For $1<\beta<2$, Miermont \cite{Miermont} proved that the corresponding ranked
fragmentation is a self-similar fragmentation of index
$\alpha=1/\beta-1$ and erosion coefficient $0$.  The dislocation
measure is somewhat harder to express than that of the Brownian
fragmentation.  Let $T$ be a stable subordinator of Laplace exponent
$\lambda^{1/\beta}$ and write $\Delta T_{[0,1]}$ for the sequence of
its jumps before time 1, ranked in decreasing order.  Then for any
non-negative measurable function $f$,
\[
\int_{\Sfl}f(\mathbf s)\nu(\mathrm d \mathbf s) =
\frac{\beta(\beta-1) \Gamma(1 - \frac{1}{\beta})}{\Gamma(2 -
  \beta)} \E{T_1 f(T_1^{-1} \Delta T_{[0,1]})}.
\]

As we will discuss in Section~\ref{subsec:heightprocesses}, there
is a unique point $x_* \in [0,1]$ at which $\mathbf{e}$ attains its
maximum (this maximum is denoted $\zeta$ to be consistent with earlier
notation,  so that $\zeta = \mathbf{e}(x_{\ast})$).  So the size of the last fragment, $F_*$, is well-defined
for the stable fragmentations.  We first state a result on the
behavior of the stable height processes near their maximum.
\begin{theorem}
\label{thm:stablefragfonc}
Let $\mathbf e$ be a normalized excursion of the stable height process
with parameter $\beta$ and extend its definition to $\mathbb R$ by setting
$\mathbf{e}(x)=0$ when $x \notin [0,1]$.  Then there exists an almost surely
positive continuous self-similar function $H_{\infty}$ on
$\mathbb{R}$, which is symmetric in distribution (in the sense that
$(H_\infty(-x), x \geq 0) \equidist (H_\infty(x), x \geq 0)$) and
converges to $+\infty$ as $x \to +\infty$ or $x \to -\infty$, and
which is such that
\[
t^{-1}\left(\zeta-\mathbf e(x_*+t^{-1/\alpha} \ \cdot \ ) \right) \convlaw
H_{\infty} \text{ as $t \to 0$},
\]
where $\alpha=1/\beta-1$. The convergence holds
with respect to the topology of uniform convergence on compacts.
\end{theorem}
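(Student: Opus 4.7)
The plan is to deduce this convergence from the generalized Williams' decomposition for stable excursions of Abraham and Delmas, combined with the scaling invariance of the stable height process. Set
\[
\mathbf{e}^+(s) := \zeta - \mathbf{e}(x_* + s), \ s \in [0, 1-x_*], \quad \mathbf{e}^-(s) := \zeta - \mathbf{e}(x_* - s), \ s \in [0, x_*],
\]
for the two half-excursions obtained by starting at the maximum $(x_*, \zeta)$ and reading the excursion away from it in each direction, with heights measured downwards from $\zeta$. The Abraham--Delmas decomposition describes $(\mathbf{e}^+, \mathbf{e}^-)$, conditionally on $(x_*, \zeta)$, as two independent continuous Markov processes whose laws are given by explicit formulas involving the entrance law and transition kernels of the stable height process. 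The rescaled object in the theorem is then simply the pair $(t^{-1} \mathbf{e}^+(t^{-1/\alpha} x), t^{-1} \mathbf{e}^-(t^{-1/\alpha} (-x)))$ glued along $x \geq 0$ and $x \leq 0$ respectively.

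The second step is a scaling reduction. By the scaling property of the stable height process, an excursion of length $\ell$ is, up to an explicit (space, height) rescaling with exponent $\alpha$, distributed as a normalized excursion of length $1$. Applied to each half-excursion with $\ell = t^{-1/\alpha}$, this reduces the problem to showing that the Abraham--Delmas Markov processes, viewed on compact intervals $[0,n]$, converge in distribution as their prescribed lifetime diverges. Denoting such a process generically by $(Y^{(L)}_s, s \in [0,L])$, we need $(Y^{(L)}_s, s \in [0,n])$ to converge in the uniform topology, as $L \to \infty$, to a limiting continuous process $(Y^\infty_s, s \in [0,n])$.

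Granting this limit, $H_\infty^+$ and $H_\infty^-$ are defined as two independent copies of $Y^\infty$ (independence is preserved because the two halves are independent before passing to the limit), and $H_\infty(x) := H_\infty^+(x) \I{x \geq 0} + H_\infty^-(-x) \I{x < 0}$. The symmetry in distribution of $H_\infty$ follows from the identical laws of $H_\infty^+$ and $H_\infty^-$, itself a consequence of the time-reversal invariance of a normalized stable excursion (which exchanges $\mathbf{e}^+$ and $\mathbf{e}^-$). Self-similarity of $H_\infty$ is inherited from the scaling relation used in the second step, and continuity, almost-sure positivity on $\mathbb{R} \setminus \{0\}$, and divergence at $\pm \infty$ all transfer from the corresponding properties of $Y^\infty$.

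The main obstacle is the long-horizon convergence $Y^{(L)} \to Y^\infty$ in the third step. For $\beta = 2$ this is the classical statement, used in the Brownian sketch of Section \ref{sec:Brownian}, that a Brownian excursion of length $L$ observed on any compact near its origin converges to a Bessel$(3)$ process. For general $\beta \in (1,2)$ a parallel result must be assembled from the Abraham--Delmas formulas, requiring a tightness estimate in the uniform topology on compacts and an identification of the limiting entrance law and semigroup. Constructing $Y^\infty$ explicitly and verifying continuity and positivity of its sample paths is where most of the technical work is expected to lie.
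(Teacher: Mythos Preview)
Your outline contains two structural errors that would cause the argument to fail for $\beta\in(1,2)$.

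First, the height process is \emph{not} Markov when $\beta<2$ (see the opening of Section~\ref{sec:Williams}), so the pre- and post-maximum pieces $\mathbf e^\pm$ cannot be described as ``two independent continuous Markov processes'' with an entrance law and a semigroup. The Abraham--Delmas decomposition does not give a Markovian description of $\mathbf e^\pm$; it gives a Poissonian one, in terms of a single Poisson point measure $\rho=\sum_i\delta_{(v_i,r_i,t_i)}$ recording, for each level $t_i$, a total amount of local time $r_i$ and a uniform split $v_i$ of that local time between left and right. In particular, your claim that $H_\infty^+$ and $H_\infty^-$ are \emph{independent} is false for $\beta<2$: the two sides share the levels $t_i$ and the total local times $r_i$, and are only conditionally independent given the pair of point measures $\sum_i(1-v_i)r_i\delta_{t_i}$ and $\sum_i v_i r_i\delta_{t_i}$. (The independence does hold in the Brownian case, where local minima are unique; this is exactly why Theorems~\ref{thm:CVHcenter} and~\ref{thm:CVHcenterBr} have different constructions.)

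Second, the Abraham--Delmas theorem conditions on $H_{\max}=m$, not on the lifetime $\sigma$, so your scaling step does not land you directly in a situation where ``the lifetime diverges''. The paper handles this by first building $H_\infty$ explicitly from the Poisson construction, so that the Williams decomposition reads $\bar H\stackrel{d}{=}H_\infty^{(m)}$ under $\N(\,\cdot\mid H_{\max}=m)$ with $H_\infty^{(m)}$ the process capped at level $m$; then $H_\infty^{(m)}\to H_\infty$ on compacts is trivially an almost sure statement. The genuine work is (a) converting the conditioning $\sigma=x$ into a mixture over $H_{\max}$ (Lemma~\ref{lem:sigmaH_max}), and (b) proving the asymptotic independence in Lemma~\ref{lem:etaHindep}, namely that $(m^{1/\alpha}\eta(ma),H_\infty^{(ma)})\convdist(\tilde\eta(a),H_\infty)$ with $\tilde\eta(a)$ independent of $H_\infty$, which is what allows the $\sigma$-rescaling inside $\phi(H^{[x/\sigma]})$ to decouple from the limiting path. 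Your proposal does not identify either of these steps, and the ``long-horizon convergence $Y^{(L)}\to Y^\infty$'' you highlight is, in the paper's approach, not a convergence problem at all but a built-in coupling.
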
 

A precise definition of $H_{\infty}$ is given in Section~\ref{sec:Williams}.  Intuitively, we can think of it as an excursion of the height process of infinite length, centered at its ``maximum" and flipped upside down.

Theorem~\ref{thm:stablefragfonc} leads to the following generalization of
Theorem~\ref{thm:Brownianfrag}.

\begin{theorem}[Stable interval fragmentation] \label{thm:stablefrag} 
  Let $O$ be a stable interval fragmentation with parameter $\beta$ and
  consider the corresponding self-similar function $H_{\infty}$
  introduced in Theorem \ref{thm:stablefragfonc}. Then
\begin{equation*}
t^{1/\alpha}\left(O\left((\zeta - t)^{+}\right)-x_{*}\right) 
\convlaw \{x\in \mathbb{R}:H_{\infty}(x)<1\} \text{ as $t\rightarrow 0$}.
\end{equation*}
\end{theorem}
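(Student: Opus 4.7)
The plan is to reduce Theorem~\ref{thm:stablefrag} to Theorem~\ref{thm:stablefragfonc} by a deterministic continuity argument on sublevel sets. The starting observation is the change-of-variables identity: for any $t \in (0,\zeta)$,
\[
t^{1/\alpha}\bigl(O((\zeta-t)^{+}) - x_{*}\bigr) = \{x \in \mathbb{R} : H_{t}(x) < 1\},
\]
where
\[
H_{t}(x) := t^{-1}\bigl(\zeta - \mathbf{e}(x_{*} + t^{-1/\alpha} x)\bigr)
\]
and $\mathbf{e}$ is extended to $\mathbb{R}$ by $0$ outside $[0,1]$, as in Theorem~\ref{thm:stablefragfonc}. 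Indeed, $y \in O((\zeta-t)^{+})$ iff $\mathbf{e}(y) > \zeta - t$, and after the substitution $y = x_{*} + t^{-1/\alpha} x$ this is exactly $H_{t}(x) < 1$. Since Theorem~\ref{thm:stablefragfonc} provides $H_{t} \convlaw H_{\infty}$ uniformly on compacts, the result will follow if the map $h \mapsto \{h < 1\}$ is continuous at (almost every realization of) $H_{\infty}$, with respect to the topology on open subsets of $\mathbb{R}$ to be fixed in Section~\ref{subsec:topology}.

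My first step would be to invoke Skorokhod's representation theorem to upgrade Theorem~\ref{thm:stablefragfonc} to almost sure uniform-on-compacts convergence $H_{t_{n}} \to H_{\infty}$ along any deterministic sequence $t_{n} \downarrow 0$, on a suitable probability space. It then suffices to prove the deterministic statement: whenever $h_{n} \to h$ uniformly on compacts of $\mathbb{R}$, with $h$ continuous, $h(x) \to +\infty$ as $|x| \to \infty$, and the level set $\{h = 1\}$ having empty interior, then $\{h_{n} < 1\} \to \{h < 1\}$ in the chosen topology. The divergence of $h$ at infinity confines $\{h < 1\}$ and, for $n$ large, $\{h_{n} < 1\}$ to some compact window $[-K,K]$; on this window, at every $x$ with $h(x) \neq 1$ one has $\mathbbm{1}\{h_{n}(x) < 1\} = \mathbbm{1}\{h(x) < 1\}$ for all large $n$. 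The no-flat-piece condition at level $1$ then lets one upgrade this pointwise statement to convergence of each interval component of the decomposition, which is enough to conclude in any reasonable topology on open subsets of $\mathbb{R}$ (e.g.\ Hausdorff distance on complements inside the window, or componentwise convergence of the interval decomposition).

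The principal obstacle is the non-degeneracy hypothesis on $H_{\infty}$ at level $1$, namely that $H_{\infty}$ is a.s.\ not constant on any open interval (and in particular not at level $1$). In the Brownian case this is immediate, since $H_{\infty}$ is built from two independent three-dimensional Bessel processes, which have no flat pieces. In the general stable case, $H_{\infty}$ will be constructed in Section~\ref{sec:Williams} by gluing two independent processes coming from the generalized Williams decomposition of Abraham and Delmas; the absence of flat pieces at deterministic levels should be inherited from the corresponding path regularity of the stable height process, but transferring this from the pre-limit excursion to the scaling limit $H_{\infty}$ requires some care near $x_{*}$ as well as away from it. Once this property is established, the continuous-mapping argument above, together with the topology to be specified in Section~\ref{subsec:topology}, closes the proof.
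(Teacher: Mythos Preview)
Your strategy is exactly the paper's: rewrite $t^{1/\alpha}(O((\zeta-t)^+) - x_*)$ as the sublevel set $\{H_t < 1\}$, invoke Theorem~\ref{thm:stablefragfonc} (upgraded via Skorokhod), and then apply a deterministic continuity lemma for the map $h \mapsto \{h < 1\}$. The paper packages the latter as Proposition~\ref{prop:fnconvimpliessetconv} and checks its hypotheses for $H_\infty$ in Lemma~\ref{lemmaconditions}.

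There is, however, a genuine gap in your version of the deterministic lemma: the hypothesis ``$\{h = 1\}$ has empty interior'' (equivalently, no flat piece at level $1$) is \emph{not} sufficient. Take $h(x) = (x^2-1)^2$ on $[-2,2]$, extended to go to $+\infty$. Then $\{h=1\} = \{0,\pm\sqrt 2\}$ is finite, yet $\{h<1\} = (-\sqrt 2,0)\cup(0,\sqrt 2)$, while for $h_n = h - \tfrac1n$ the set $\{h_n<1\}$ is a single interval $(-a_n,a_n)$ with $a_n \to \sqrt 2$. The point $0$ lies in the complement of $\{h<1\}$ but stays at distance $a_n \to \sqrt 2$ from the complement of $\{h_n<1\}$, so convergence fails in $d_{\mathcal O}$. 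The issue is that $1$ is a local \emph{maximum} of $h$. The paper's Proposition~\ref{prop:fnconvimpliessetconv} therefore assumes both $\mathrm{Leb}\{h=1\}=0$ (stronger than empty interior) \emph{and} that $1$ is not a local maximum of $h$. Both are verified for $H_\infty$ in Lemma~\ref{lemmaconditions}, using the Poisson construction of Section~\ref{sec:Williams} together with the facts that, under $\mathbb N$, the height process has zero occupation at fixed levels and almost surely no local minimum at a prescribed level. This is the ``some care'' you anticipate, but it goes beyond ruling out flat pieces.

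A secondary point: your claim that $\{h_n < 1\}$ is eventually confined to a fixed window $[-K,K]$ does not follow from uniform convergence on compacts (nothing controls $h_n$ far from the origin). Fortunately this is irrelevant for Theorem~\ref{thm:stablefrag}, since the metric $d_{\mathcal O}$ is built from Hausdorff distances on the windows $[-k,k]$ and is therefore intrinsically local. Confinement \emph{is} needed for the ranked statement, Corollary~\ref{corostablerank}, and the paper handles it there separately via Lemma~\ref{lemmarestriction}.
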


The topology on the bounded open sets of $\mathbb{R}$ in which this
convergence occurs will be discussed in the next section. 

Define
\[
\Sflfin := \left\{\mathbf{s} = (s_1,s_2, \ldots): s_1 \geq
  s_2 \geq \ldots \geq 0, \sum_{i=1}^{\infty} s_i < \infty\right\}.
\]
We endow this space with the distance
\[
d_{\Sflfin}(\mathbf{s},\mathbf{s'})=\sum_{i=1}^{\infty}|s_i-s_i'|.
\]
We also have the following ranked counterpart of
Theorem~\ref{thm:stablefrag}: let $F(t)$ be the decreasing sequence of
lengths of the interval components of $(O(t),t\geq 0)$ and, similarly,
let $F_{\infty}$ be the decreasing sequence of lengths of the interval
components of $\{x\in \mathbb{R}:H_{\infty}(x)<1\}$.  Then $F_{\infty}
\in \Sflfin$.

\begin{corollary}[Ranked stable fragmentation]
\label{corostablerank}
As $t \to 0$,
\[
t^{1/\alpha}F((\zeta-t)^+)\convlaw F_{\infty}.
\]
\end{corollary}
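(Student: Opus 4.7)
The plan is to upgrade the convergence of open sets from Theorem~\ref{thm:stablefrag} to convergence of the ranked sequence of component lengths in $(\Sflfin, d_{\Sflfin})$. The subtlety, which the authors themselves flag, is that $d_{\Sflfin}$ metrizes $\ell^1$-convergence and is strictly stronger than the pointwise-convergence topology on $\Sfl$ that is naturally inherited from the convergence of open sets. The proof thus splits into two parts: convergence of the lengths of components inside a fixed compact window, and a tightness argument ruling out mass escaping to infinity.

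For the first part I would use Skorokhod's representation theorem together with Theorem~\ref{thm:stablefragfonc} to work on a probability space on which $t^{-1}(\zeta - \mathbf e(x_* + t^{-1/\alpha}\,\cdot\,)) \to H_\infty$ almost surely, uniformly on compacts of $\R$. Writing $A_t := t^{1/\alpha}(O((\zeta-t)^+) - x_*)$ and $A_\infty := \{y \in \R : H_\infty(y) < 1\}$, the uniform convergence, combined with the continuity of $H_\infty$ and the fact that the level set $\{H_\infty = 1\}$ is a.s.\ finite (which one reads off the spinal construction of $H_\infty$ in Section~\ref{sec:Williams}), implies that for every fixed $K>0$ the ranked sequence of component lengths of $A_t \cap (-K,K)$ converges a.s.\ in $\ell^1$ to that of $A_\infty \cap (-K,K)$.

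The crucial second part is tightness: for every $\epsilon > 0$ one needs $K > 0$ such that $\Prob{\mathrm{Leb}(A_t \setminus (-K,K)) > \epsilon} < \epsilon$ uniformly for all small $t$. Since $H_\infty(y) \to +\infty$ as $|y| \to \infty$, $A_\infty$ is a.s.\ bounded and the analogous statement for $A_\infty$ is immediate; the difficulty is uniformity in $t$. I would derive this from the Williams-type spinal decomposition of the stable excursion at its maximum on which Section~\ref{sec:Williams} is based: on either side of $x_*$, the excursion is represented as a spine decorated with independent sub-excursions, and after rescaling by $t^{-1/\alpha}$ the spinal process converges to the backbone of $H_\infty$. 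A first-passage estimate for how far the rescaled spine travels before reaching height $1$ bounds the horizontal extent of $A_t$ uniformly in $t$, while sub-excursions grafted onto the spine outside $(-K,K)$ contribute only a small total length by a standard excursion-measure calculation. Combined with the window convergence via a three-$\epsilon$ argument, this yields $d_{\Sflfin}(t^{1/\alpha}F((\zeta-t)^+), F_\infty) \to 0$ in probability, hence convergence in law. The main obstacle is precisely this uniform-in-$t$ tail bound: the functional convergence of Theorem~\ref{thm:stablefragfonc} alone does not produce it, and one must invoke the excursion-theoretic structure of the stable height process to control the pre-limit sets outside a fixed window.
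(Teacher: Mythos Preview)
Your two-step plan---windowed convergence plus a uniform tail bound---is exactly the paper's strategy, but two points deserve correction or sharpening.

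First, for the windowed step you invoke that the level set $\{H_\infty = 1\}$ is a.s.\ finite. This is neither established nor needed; what the paper proves and uses (Lemma~\ref{lemmaconditions}) is that $\mathrm{Leb}\{H_\infty = 1\} = 0$ and that $1$ is a.s.\ not a local maximum of $H_\infty$. These are the hypotheses feeding Proposition~\ref{prop:fnconvimpliessetconv}, which gives Hausdorff convergence of the complements in $[-K,K]$ and hence pointwise convergence of ranked lengths. To upgrade from pointwise to $\ell^1$ convergence inside the window, the paper separately shows (Theorem~\ref{thm:heightprocessconvdbis}, via Lemma~\ref{lem:cvLeb}) that the total Lebesgue measure $\mathrm{Leb}(A_t \cap (-K,K))$ also converges; the combination is what yields $d_{\Sflfin}$-convergence of $|O^K_x|^{\downarrow}$.

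Second, your tightness is stated as control of $\mathrm{Leb}(A_t \setminus (-K,K))$. The paper actually proves the stronger statement (Lemma~\ref{lemmarestriction}) that, with probability at least $1-\epsilon$ uniformly in small $t$, the \emph{entire} set $A_t$ lies in $(-K,K)$, i.e.\ $\inf_{|y| \geq K} H_x(y) \geq 1$. This is obtained not through a direct first-passage estimate on the spine, but by writing $\N[\,\cdot \mid \sigma = x]$ as a $u^{1/\alpha}\,\mathrm d u$-mixture of $\N[\,\cdot \mid H_{\max} = x^{-\alpha}u]$ (Lemma~\ref{lem:sigmaH_max}), applying the Williams decomposition (Theorem~\ref{thm:CVHcenter2}), and splitting according to whether $x^{-1}\eta(x^{-\alpha}u)$ is large or small. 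The payoff is that on the good event one has $A_t = A_t \cap (-K,K)$ exactly, so the three-$\epsilon$ argument becomes a two-line indicator bound rather than an $\ell^1$ comparison of truncated sequences. Your weaker Lebesgue-tail formulation would also suffice in principle (since $d_{\Sflfin}$ between the ranked lengths of $A_t$ and of $A_t \cap (-K,K)$ is bounded by the removed mass), but your proposed justification via spinal first-passage is left at the level of a sketch, whereas the paper's route gives a clean quantitative bound.
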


In particular, this gives the behavior of the total mass $M(t) :=
\sum_{i = 1}^{\infty} F_i(t)$ of the fragmentation near its extinction
time.

Finally, as in the Brownian case, the distribution of the limit of the
size of the last fragment can be expressed in terms of a size-biased
version of the height $\zeta$.

\begin{corollary}[Behavior of the last fragment]
\label{lastfrag}
As $t \rightarrow 0$,
\begin{equation}
\label{eqn:laststable}
t\left(F_{\ast}\left( (\zeta -t)^{+}\right)\right)^{\alpha}
\convlaw \zeta_{\ast^{\alpha}}
\end{equation}
where $\zeta_{\ast^{\alpha}}$ is a ``$(-1/\alpha-1)$-size-biased''
version of $\zeta$, which means that for every test-function $f$,
\begin{equation}
\label{eqn:biasedzeta} \E{f(\zeta_{\ast^{\alpha}})} =\frac{
\E{\zeta^{-1/\alpha-1} f(\zeta)}}{\E{\zeta^{-1/\alpha-1}}}.
\end{equation}
Moreover,
\vspace{-5pt}
\begin{itemize}
\item[(i)] there exist positive constants $0<A<B$ such that
\[
\exp\left(-Bt^{1/(1+\alpha)}\right) \leq \Prob{\zeta_{*^{\alpha}} \geq t}
\leq \exp\left(-At^{1/(1+\alpha)}\right) 
\]
for all $t$ sufficiently large; \\
\item[(ii)] for any $q <1-1/\alpha$, 
\[
\Prob{\zeta_{*^{\alpha}} \leq t} \leq t^q,
\]
for all $t \geq 0$ sufficiently small.
\end{itemize}
\end{corollary}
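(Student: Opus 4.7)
Since $\mathbf e$ attains its maximum uniquely at $x_*$, the last fragment at time $(\zeta-t)^+$ is the open interval around $x_*$ on which $\mathbf e>\zeta-t$, so
\[
F_*((\zeta-t)^+)=L_t+R_t,\quad L_t:=\inf\{y>0:\mathbf e(x_*-y)\le\zeta-t\},\ R_t:=\inf\{y>0:\mathbf e(x_*+y)\le\zeta-t\}.
\]
Setting $\mathbf e^{(t)}(y):=t^{-1}(\zeta-\mathbf e(x_*+t^{-1/\alpha}y))$, one has $t^{1/\alpha}R_t=\inf\{y>0:\mathbf e^{(t)}(y)\ge 1\}$ and analogously on the left. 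Theorem~\ref{thm:stablefragfonc} gives $\mathbf e^{(t)}\convlaw H_\infty$ uniformly on compacts; since $H_\infty$ is a.s.\ continuous with $H_\infty(0)=0$, diverges at $\pm\infty$, and has no interval of constancy at the level $1$ (inherited from the non-degeneracy of the stable height process), the first-passage-to-$1$ functional is a.s.\ continuous at $H_\infty$. By the continuous mapping theorem,
\[
t F_*((\zeta-t)^+)^{\alpha}=\bigl(t^{1/\alpha}(L_t+R_t)\bigr)^{\alpha}\convlaw(T_++T_-)^{\alpha},\quad T_\pm:=\inf\{y>0:H_\infty(\pm y)\ge 1\}.
\]

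The principal task is to identify $(T_++T_-)^{\alpha}$ with the $(-1/\alpha-1)$-size-biased version of $\zeta$. I would combine (a) the self-similarity of the It\^o excursion measure $N$ of the stable height process, under which a generic excursion of lifetime $\sigma=\ell$ is obtained from the normalized $\mathbf e$ by the scaling $y\mapsto \ell^{-\alpha}\mathbf e(y/\ell)$, with (b) the generalized Williams' decomposition of Abraham and Delmas~\cite{Abraham/Delmas}, which describes the joint law of $\zeta$, $x_*$ and the two sub-excursions below the max under $N$. Disintegrating $N$ with respect to $\zeta$, rescaling vertically by $t$ and horizontally by $t^{-1/\alpha}$, and letting $t\to 0$ brings out $H_\infty$ together with the density $f_\zeta$ of the max of the normalized excursion; the Jacobian of the rescaling contributes the prefactor $\zeta^{-1/\alpha-1}$. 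This should yield, for every bounded continuous $g$, the Mellin-type identity
\[
\E{g\bigl((T_++T_-)^{\alpha}\bigr)}\cdot\E{\zeta^{-1/\alpha-1}}=\E{\zeta^{-1/\alpha-1}g(\zeta)},
\]
which is \eqref{eqn:biasedzeta}. This change-of-variable is the main technical hurdle: the Brownian case was handled by Biane--Pitman--Yor via an explicit Bes$(3)$ identity, which is unavailable for general $\beta$, so the Abraham--Delmas decomposition has to play its role.

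Finally, the tail bounds follow from the representation $\Prob{\zeta_{*^{\alpha}}\in A}=\E{\zeta^{-1/\alpha-1}\mathbf 1_{\zeta\in A}}/\E{\zeta^{-1/\alpha-1}}$ together with known estimates on $\zeta$. For (i), one uses $\exp(-c t^{1/(1+\alpha)})\lesssim\Prob{\zeta\ge t}\lesssim\exp(-c' t^{1/(1+\alpha)})$ for large $t$ (generalizing Kennedy's formula and derivable from the right tail of the supremum of the underlying stable process): integrating against $\zeta^{-1/\alpha-1}$ absorbs the polynomial prefactors into constants $A<B$. For (ii), a small-ball estimate $\Prob{\zeta\le t}\le C_k t^k$ for any $k<1$ (available from the L\'evy structure) gives
\[
\E{\zeta^{-1/\alpha-1}\mathbf 1_{\zeta\le t}}=\int_0^t x^{-1/\alpha-1}f_\zeta(x)\,dx\le C_k t^{-1/\alpha+k},
\]
whence $\Prob{\zeta_{*^{\alpha}}\le t}\le t^q$ for every $q<1-1/\alpha$ on letting $k\uparrow 1$.
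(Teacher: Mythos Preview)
Your convergence argument for~\eqref{eqn:laststable} is essentially the paper's: the last-fragment length picks out the interval component of $\{H_\infty<1\}$ containing $0$, whose length is $T_++T_-=\eta(1)$ in the paper's notation, and the continuous-mapping step is justified by Lemma~\ref{lemmaconditions}(ii) (you need that $1$ is a.s.\ not a local maximum of $H_\infty$, which is stronger than ``no interval of constancy at level $1$'').

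For the identification of $(T_++T_-)^\alpha$, your proposed limiting change-of-variable under $\N$ is vaguer than the paper's route, which is a direct \emph{static} scaling identity. By the Williams decomposition (Theorem~\ref{thm:CVHcenter}), $\eta(1)\equidist\sigma$ under $\N(\cdot\mid H_{\max}=1)$; then the second statement of Lemma~\ref{lem:sigmaH_max}, obtained purely from the scaling in Lemma~\ref{lem:scaling}, reads
\[
\N[g(\sigma^\alpha)\mid H_{\max}=1]=\frac{\N[g(H_{\max})H_{\max}^{-1/\alpha-1}\mid\sigma=1]}{\N[H_{\max}^{-1/\alpha-1}\mid\sigma=1]}
=\frac{\E{\zeta^{-1/\alpha-1}g(\zeta)}}{\E{\zeta^{-1/\alpha-1}}},
\]
which is exactly~\eqref{eqn:biasedzeta}. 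No limit $t\to0$ is needed here, and the Abraham--Delmas input is only to identify $\eta(1)$ with $\sigma$ under the conditioned measure; the Jacobian factor $\zeta^{-1/\alpha-1}$ you anticipate comes out of the one-line scaling computation in Lemma~\ref{lem:sigmaH_max}.

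Part~(ii) has a genuine gap. With the trivial bound $\E{\zeta^{-1/\alpha-1}\mathbf 1_{\zeta\le t}}\le t^{-1/\alpha-1}\Prob{\zeta\le t}$ (or by integration by parts), the correct exponent is $-1/\alpha-1+k$, not $-1/\alpha+k$. With your claimed small-ball estimate $\Prob{\zeta\le t}\le C_k t^k$ for $k<1$, this yields only $\Prob{\zeta_{*^\alpha}\le t}\le t^q$ for $q<-1/\alpha$, short of $1-1/\alpha$. The paper invokes \cite[Section~4.2.1]{HaasLossMass}, which gives $\Prob{\zeta\le t}\le t^{q'}$ for every $q'<1+\underline p/(-\alpha)$, and then computes $\underline p=-\alpha$ for the stable dislocation measure (via Miermont's L\'evy measure $L(\mathrm dx)\propto e^x(e^x-1)^{\alpha-1}\mathrm dx$), so $q'<2$. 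Feeding $q'<2$ into the correct exponent $-1/\alpha-1+q'$ gives the full range $q<1-1/\alpha$. Your argument needs both the stronger small-ball input and the corrected exponent.
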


The proof of Theorem \ref{thm:stablefragfonc} is based on the
``Williams' decomposition'' of the height function $H$ given by
Abraham and Delmas~\cite[Theorem 3.2]{Abraham/Delmas}, and can be
found in Section \ref{sec:convheightprocess}.  We emphasise the fact
that uniform convergence on compacts of a sequence of continuous
functions $f_n:\mathbb{R} \rightarrow \mathbb{R}$ to $f:\mathbb{R}
\rightarrow \mathbb{R}$ does not imply in general that the sets $\{x
\in \mathbb{R} : f_n(x)<1 \}$ converge to $\{x \in \mathbb{R} : f(x)<1
\}$. Take, for example, $f$ constant and equal to $1$ and $f_n$
constant and equal to $1-1/n$ (see the next section for the topology
we consider on open sets of $\mathbb{R}$). Less trivial examples show
that there may exist another kind of problem when passing from the
convergence of functions to that of ranked sequences of lengths of
interval components: take, for example, a continuous function
$f:\mathbb{R} \rightarrow \mathbb{R}^+$ which is strictly larger than
$1$ on $\mathbb{R} \backslash [-1,1]$ and then consider continuous
functions $f_n:\mathbb{R} \rightarrow \mathbb{R}^+$ such that $f_n=f$ on
$[-n,n]$, $f_n=0$ on $[n+1,2n]$. Clearly, $f_n$ converges uniformly
on compacts to $f$, but the lengths of the longest interval components
of $\{x \in \mathbb{R} : f_n(x)<1 \}$ converge to $\infty$.

However, we will see that the random functions we work with do not
belong to the set of ``problematic'' counter-examples that can arise
and that it will be possible to use Theorem \ref{thm:stablefragfonc}
to get Theorem \ref{thm:stablefrag} and Corollary
\ref{corostablerank}.  Preliminary work will be done in Section
\ref{sec:technicalbackground}, where an explicit construction of the
limit function $H_{\infty}$ via Poisson point measures is also
given. Theorem \ref{thm:stablefrag} and Corollary \ref{corostablerank}
are proved in Section \ref{sec:proof}. Then we will see in Section
\ref{seclastfrag} that the limit $\zeta_{*^{\alpha}}$ arising in
(\ref{eqn:laststable}) is actually distributed as the length to the
power $\alpha$ of an excursion of $H$, conditioned to have its maximum
equal to $1$. It will then be easy to check that this is a size-biased
version of $\zeta$ as defined in (\ref{eqn:biasedzeta}). The bounds
for the tails $\Prob{\zeta_{*^{\alpha}} \geq t}$ will also be
proved in Section \ref{seclastfrag}, as well as the following remark.

\textbf{Remark.} In the Brownian case ($\alpha=-1/2$), the distribution of $\zeta$ (and consequently that of $\zeta_{*^{\alpha}}$), is known; see Section \ref{sec:Brownian}. We do not know the distribution of  $\zeta_{*^{\alpha}}$ explicitly when $\alpha \in (-1/2,0)$. However, in this case, if we set, for $\lambda \geq 0$, 
\[
\Phi(\lambda):= \E{\exp{(-\lambda \zeta_{*^{\alpha}}^{1/\alpha})}}
=\frac{\E{\exp(-\lambda \zeta^{1/\alpha})\zeta^{{-1/\alpha}-1}}}
      {\E{\zeta^{-1/\alpha-1}}},
\]
then it can be shown that $\Phi$ satisfies the following equation:
\begin{equation}
\label{eqPhi}
\begin{split}
\Phi(\lambda)=\exp\!\left(\!-\!\int_{\mathbb{R}^+\times [0,\lambda^{-\alpha}]} \!\!\!
  \left(1-e^{-(-\frac{\alpha}{\alpha+1})^{-1/\alpha}r 
\int_0^t (1-\Phi( v^{-1/\alpha})) 
v^{1/\alpha} \mathrm d v}\right) \right.\\
\left. \times \frac{-\alpha}
{(1+\alpha)^2 \Gamma\left(\frac{1+2\alpha}{1+\alpha}\right)} 
e^{-r(-\frac{\alpha t}{\alpha+1})^{1 + 1/\alpha}} r^{-1/(\alpha+1)}
\mathrm d r \mathrm dt
\right).
\end{split}
\end{equation}

\bigskip

Finally, we recall that the almost sure logarithmic results for $F_1$
and $F_*$ will be proved in Section~\ref{SectionLog}.


\section{Technical background}
\label{sec:technicalbackground}

We start by detailing the topology on open sets which will give a
proper meaning to the statement of Theorem~\ref{thm:stablefrag}.  We
then recall some facts about height processes and prove various useful
lemmas.  Finally, we introduce the decomposition result of Abraham and
Delmas \cite{Abraham/Delmas}, in a form suitable for our purposes.

\subsection{Topological details}
\label{subsec:topology}

When dealing with interval fragmentations, we will work in the set
$\mathcal{O}$ of bounded open subsets of $\R$.  This is endowed with
the following distance:
\begin{equation*}
d_{\mathcal O}(A,B)=\sum_{k \in \N} 2^{-k} 
d_{\mathcal{H}}\Big((A \cap (-k,k))^{\mathrm{c}} \cap [-k,k],(B
  \cap (-k,k))^{\mathrm{c}} \cap [-k,k]\Big),
\end{equation*}
where $S^{\mathrm{c}}$ denotes the closed complement of $S \in
\mathcal{O}$ and $d_{\mathcal H}$ is the Hausdorff distance on the set
of compact sets of $\R$.  For $A \neq \R$, let $\chi_A(x) = \inf_{y
  \in A^{\mathrm{c}}} |x-y|$.  If we define, for $x \in [-k,k]$,
$\chi^k_A(x) = \chi_{A \cap (-k,k)} (x) = \inf_{y \in (A \cap
  (-k,k))^{\mathrm{c}}} |x-y|$ then we also have
\[
d_{\mathcal{O}}(A,B) = \sum_{k \in \N} 2^{-k} \sup_{x \in
    [-k,k]} |\chi_A^k(x) - \chi_B^k(x)| 
\]
(see p.69 of Bertoin~\cite{BertoinBook}).  The open sets we will deal
with in this paper arise as excursion intervals of continuous
functions.  In particular, we will need to know that if we have a
sequence of continuous functions converging (in an appropriate sense)
to a limit, then the corresponding open sets converge.

Consider the space $C(\R, \R^+)$ of continuous functions from $\R$ to
$\R^+$.  By \emph{uniform convergence on compacts}, we mean
convergence in the metric
\[
d(f,g) = \sum_{k \in \N} 2^{-k} \left( \sup_{t \in [-k,k]} |f(t) -
  g(t)| \wedge 1 \right).
\]
The name is justified by the fact that convergence in $d$ is
equivalent to uniform convergence on all compact sets.

Suppose $f \in C(\R,\R^+)$.  We say that $a \in \R^+$ is a \emph{local
  maximum} of $f$ if there exist $s \in \R$ and $\epsilon > 0$ such
that $f(s) = a$ and $\max_{s-\epsilon \leq t \leq s+\epsilon} f(t) =
a$.  Note that this includes the case where $f$ is constant and equal
to $a$ on some interval, even if $f$ never takes values smaller than
$a$.  We define a \emph{local minimum} analogously.

\begin{proposition} \label{prop:fnconvimpliessetconv} Let $f_n: \R \to
  \R^+$ be a sequence of continuous functions and let $f: \R \to \R^+$
  be a continuous function such that $f(0) = 0$, $f(x) \to \infty$ as
  $x \to +\infty$ or $x \to -\infty$.  Suppose also that $1$ is not a
  local maximum of $f$ and that $\mathrm{Leb}\{x \in \R: f(x) = 1\} =
  0$.  Define $A = \{x \in \R: f(x) < 1\}$ and $A_n = \{x \in \R:
  f_n(x) < 1\}$.  Suppose now that $f_n$ converges to $f$ uniformly on
  compact subsets of $\R$.  Then $d_{\mathcal{O}}(A_n,A) \to 0$ as $n
  \to \infty$.
\end{proposition}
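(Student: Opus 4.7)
The plan is to reduce the series defining $d_{\mathcal{O}}(A_n, A)$ to term-wise convergence and show, for each fixed $k \in \N$, that the $k$-th summand $\sup_{x \in [-k,k]} |\chi^k_A(x) - \chi^k_{A_n}(x)|$ tends to $0$. Each summand is bounded by $2k$ and $\sum_k 2^{-k}\cdot 2k < \infty$, so dominated convergence then yields $d_{\mathcal{O}}(A_n, A) \to 0$. The $k$-th summand equals the Hausdorff distance on $[-k,k]$ between the closed complements of $A \cap (-k,k)$ and $A_n \cap (-k,k)$, so it suffices to prove $d_{\mathcal{H}}(C_n, C) \to 0$ where $C := \{f \geq 1\}\cap[-k,k]$ and $C_n := \{f_n \geq 1\}\cap[-k,k]$ (the endpoints $\pm k$ belonging to both sets once $k$ is large). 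Using $f(x) \to \infty$ as $|x| \to \infty$, I would first choose $K \geq k+1$ so that $\bar A \subset (-K+1, K-1)$ and $f \geq 1 + \delta_0$ on $[-k-1, k+1]\setminus(-K+1, K-1)$ for some $\delta_0 > 0$; for every $n$ with $\eta_n := \|f_n - f\|_{\infty, [-k-1, k+1]} < \delta_0$, we then have $A_n \cap [-k,k] \subset (-K+1, K-1)$, localizing the problem uniformly in $n$.

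To prove the Hausdorff convergence I would handle the two directions separately. For $x \in C_n$ one has $f(x) \geq 1 - \eta_n$, and a sequential-compactness argument using only continuity of $f$ shows that for every $\varepsilon > 0$ there exists $\eta_0 > 0$ such that $\{y \in [-k,k]: f(y) \geq 1 - \eta_0\}$ lies in the $\varepsilon$-neighborhood of $C$; thus $\mathrm{dist}(x, C) \leq \varepsilon$ as soon as $\eta_n < \eta_0$. For the reverse direction, the delicate case is $x \in C$ with $f(x) = 1$: here the assumption that $1$ is not a local maximum of $f$ ensures that in every neighborhood of $x$ there is a point $y$ with $f(y) > 1$. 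Making this uniform by compactness, the continuous function $x \mapsto \sup_{[x-\varepsilon, x+\varepsilon]} f - 1$ is strictly positive on the compact set $\{f = 1\} \cap [-k, k]$ and hence bounded below by some $\delta(\varepsilon) > 0$, so the point $y$ can always be chosen with $f(y) \geq 1 + \delta(\varepsilon)$; for $\eta_n < \delta(\varepsilon)$, such $y$ lies in $C_n$. Points $x \in C$ with $f(x) > 1$ either satisfy $f(x) > 1 + \eta_n$ and so belong to $C_n$ themselves, or have $f(x) \in (1, 1 + \eta_n]$, in which case a further compactness argument places $x$ within $o(1)$ of $\{f = 1\}$ and reduces to the preceding case.

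The main obstacle is this second direction, which genuinely requires the assumption that $1$ is not a local maximum of $f$: otherwise a plateau of $f$ at height $1$ could be swallowed by an arbitrarily small downward perturbation in $f_n$, producing a large interval component of $A_n$ with no nearby component of $A$, and Hausdorff convergence would fail (a plateau $f \equiv 1$ on $[a,b]$ with $f_n \equiv 1 - 1/n$ there makes this explicit). The auxiliary hypothesis $\mathrm{Leb}\{f = 1\} = 0$ does not seem essential for the Hausdorff convergence proved here, but will presumably enter later when passing from convergence of open sets to convergence of the ranked sequence of interval lengths. Once both Hausdorff inclusions hold uniformly in $x \in [-k,k]$, the dominated-convergence step collapses the series and completes the proof.
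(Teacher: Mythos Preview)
Your approach is correct and genuinely different from the paper's. You argue directly at the level of Hausdorff distance on $[-k,k]$, proving the two inclusions $C_n \subset C_\varepsilon$ and $C \subset (C_n)_\varepsilon$ via compactness. The paper instead works with the functions $\chi^1_A, \chi^1_{A_n}$ and decomposes $[-k,k]$ into three pieces: the union $E^{\uparrow}_\delta$ of long excursion intervals of $f$ above $1$, the union $E^{\downarrow}_\delta$ of long excursion intervals below $1$, and the remainder $R$; it then controls $|\chi^1_{A_n} - \chi^1_A|$ separately on each piece, using the intermediate value theorem to locate level-$1$ crossings of $f_n$ near those of $f$. The ``no local maximum'' hypothesis enters in both proofs at the same conceptual point (producing, near each $x$ with $f(x)=1$, a nearby $y$ with $f(y)>1$), but the paper's decomposition additionally needs $\mathrm{Leb}\{f=1\}=0$ to force $\mathrm{Leb}(R)$ small. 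Your proof does not use this hypothesis, and your remark that it is inessential here is correct: it is used later (Lemma~\ref{lem:cvLeb}) to pass to convergence of total lengths. One small simplification of your argument: rather than splitting $C$ into $\{f=1\}$, $\{f>1+\eta_n\}$, and $\{1<f\le 1+\eta_n\}$, note that $x \mapsto \sup_{[x-\varepsilon,x+\varepsilon]} f$ is continuous and strictly exceeds $1$ on \emph{all} of the compact set $C$ (trivially where $f(x)>1$, and by the no-local-max assumption where $f(x)=1$), hence is bounded below by $1+\delta(\varepsilon)$ there; this handles all three cases at once. Your first-paragraph localization with the auxiliary $K$ is somewhat garbled (the set $[-k-1,k+1]\setminus(-K+1,K-1)$ is empty once $K\ge k+2$) and in fact unnecessary, since the term-by-term argument already lives on $[-k,k]$ and the endpoints $\pm k$ lie in both complements by construction.
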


Define
\[
g(x) = \sup\{y \leq x: f(y) \geq 1\}
\]
and
\[
d(x) = \inf\{y \geq x: f(y) \geq 1\}.
\]
Then 
\begin{equation} \label{eqn:chiequiv}
\chi_A(x) = (x - g(x)) \wedge (d(x) - x).
\end{equation}
Define $g_n$ and $d_n$ to be the analogous quantities for $f_n$.  The
proof of the following lemma is straightforward.

\begin{lemma} \label{lem:chi}
For $x, y \in \R$, 
\[
|\chi_{A}(x) - \chi_{A}(y)| \leq |x-y|.
\]
Moreover,
\[
|\chi_{A_n}(x) - \chi_A(x)| \leq \max \{|g_n(x) - g(x)|,|d_n(x) - d(x)|\}.
\]
\end{lemma}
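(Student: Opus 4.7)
The first inequality is just the statement that the function $\chi_A$, as an infimum of $1$-Lipschitz maps, is itself $1$-Lipschitz. I would argue it directly: since $A \neq \R$ (because $f$ tends to $\infty$), $A^{\mathrm{c}}$ is non-empty, and for any fixed $z \in A^{\mathrm{c}}$ the triangle inequality gives $|x - z| \leq |x - y| + |y - z|$. Taking the infimum over $z \in A^{\mathrm{c}}$ on the right yields $\chi_A(x) \leq |x - y| + \chi_A(y)$, and swapping the roles of $x$ and $y$ finishes the first claim.

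For the second inequality, the plan is to exploit the representation (\ref{eqn:chiequiv}) already recorded just before the lemma, together with its analogue $\chi_{A_n}(x) = (x - g_n(x)) \wedge (d_n(x) - x)$. The only abstract input needed is the elementary pointwise bound
\[
|a \wedge b - c \wedge d| \leq \max\{|a - c|, |b - d|\},
\]
which is immediate by case analysis on which of $a,b$ and which of $c,d$ achieves the minimum. Applied with $a = x - g_n(x)$, $b = d_n(x) - x$, $c = x - g(x)$, $d = d(x) - x$, the two $x$'s cancel inside the absolute values and one reads off the stated bound $\max\{|g_n(x) - g(x)|, |d_n(x) - d(x)|\}$.

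There is essentially no obstacle here; the only point that deserves a line of justification is the validity of (\ref{eqn:chiequiv}), i.e.\ that the infimum in the definition of $\chi_A$ is actually attained at $g(x)$ or $d(x)$. This follows from the continuity of $f$: the closed complement $A^{\mathrm{c}} = \{y \in \R : f(y) \geq 1\}$ is indeed closed, and by the assumption $f(x) \to \infty$ at $\pm\infty$ the sets $\{y \leq x : f(y) \geq 1\}$ and $\{y \geq x : f(y) \geq 1\}$ are non-empty, so that both $g(x)$ and $d(x)$ belong to $A^{\mathrm{c}}$ (with the convention $g(x) = d(x) = x$ when $x \notin A$, in which case $\chi_A(x) = 0$ and the identity still holds).
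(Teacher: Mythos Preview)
Your argument is correct. The paper itself omits the proof entirely, declaring it ``straightforward'', so there is nothing to compare against; what you have written is precisely the natural justification one would supply, and the elementary inequality $|a \wedge b - c \wedge d| \leq \max\{|a-c|,|b-d|\}$ via case analysis is the right tool for the second part.
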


\begin{proof}[Proof of Proposition~\ref{prop:fnconvimpliessetconv}.]
  It suffices to prove that $d_{\mathcal{H}}((A_n \cap
  (-1,1))^{\mathrm{c}} \cap [-1,1],(A \cap (-1,1))^{\mathrm{c}} \cap
  [-1,1]) \to 0$ as $n \to \infty$ for $f,f_n : [-1,1] \to \R^+$ such
  that $f(0) = 0$, 1 is not a local maximum of $f$, $\mathrm{Leb}\{x
  \in [-1,1]: f(x) = 1\} = 0$ and $f_n \to f$ uniformly.  In other
  words, we need to show that $\sup_{x \in [-1,1]}| \chi^1_{A_n}(x) -
  \chi^1_{A}(x)| \to 0$ as $n \to \infty$.  Note that the appropriate
  definitions of $g(x)$ and $d(x)$ in order to make
  (\ref{eqn:chiequiv}) true for $\chi^1_{A}$ are
\[
g(x) = \sup\{y \leq x: f(y) \geq 1\} \vee -1 
\]
and
\[
d(x) = \inf\{y \geq x: f(y) \geq 1\} \wedge 1
\]
and we adopt these definitions for the rest of the proof.

Let $\epsilon > 0$.  For $r>0$ let
\begin{align*}
E^{\uparrow}_r & = \{x \in (-1,1): x \in (a,b) \text{ such that } f(y) > 1,
\forall\ y \in (a,b), |b-a| > r\}, \\
E^{\downarrow}_r & = \{x \in (-1,1): x \in (a,b) \text{ such
  that } f(y) < 1, \forall\ y \in (a,b), |b-a| > r\}.
\end{align*}
These are the collections of excursion intervals of length exceeding
$r$ above and below 1.  Take $0 < \delta < \epsilon/2$ small enough that
$\mathrm{Leb}(E^{\uparrow}_{\delta} \cup E^{\downarrow}_{\delta}) > 2
- \epsilon/2$ (we can do this since $\mathrm{Leb}\{x \in [-1,1]: f(x)
= 1\} = 0$).  Set $R = [-1,1] \setminus (E^{\uparrow}_{\delta} \cup
E^{\downarrow}_{\delta})$.  Each of $E^{\uparrow}_{\delta}$ and
$E^{\downarrow}_{\delta}$ is composed of a finite number of open
intervals.

$\bullet$ We will first deal with $E^{\uparrow}_{\delta}$.  On this
set, $\chi_A^1(x) = 0$.  Take hereafter $0 < \eta < \delta/6$ and let
\[
E^{\uparrow}_{\delta,\eta} = \{x \in E^{\uparrow}_{\delta}: (x-\eta,x+\eta)
\subset E^{\uparrow}_{\delta}\}.
\]
Then $\inf_{x \in E_{\delta,\eta}^{\uparrow}} f(x) > 1$.  Since $f_n
\to f$ uniformly, it follows that there exists $n_1$ such that $f_n(x)
> 1$ for all $x \in E^{\uparrow}_{\delta,\eta}$ whenever $n \geq n_1$.
Then for $n \geq n_1$, we have $\chi_{A_n}^1(x) = 0$ for all $x \in
E^{\uparrow}_{\delta,\eta}$.  Since $|\chi_{A_n}^1(x) - \chi_{A_n}^1(y)|
\leq |x-y|$, it follows that $\chi_{A_n}^1(x) < \eta$ for all $x \in
E^{\uparrow}_{\delta}$.  So $\sup_{x \in E^{\uparrow}_{\delta}}
|\chi_A^1(x) - \chi_{A_n}^1(x)| < \eta$ whenever $n \geq n_1$.

$\bullet$ Now turn to $E^{\downarrow}_{\delta}$.  As before, define
\[
E^{\downarrow}_{\delta,\eta} = \{x \in E^{\downarrow}_{\delta}:
(x-\eta,x+\eta) \subset E^{\downarrow}_{\delta}\}.
\]
Then $\sup_{x \in E^{\downarrow}_{\delta,\eta}} f(x) < 1$.  Since $f_n
\to f$ uniformly, it follows that there exists $n_2$ such that $f_n(x)
< 1$ for all $x \in E^{\downarrow}_{\delta,\eta}$ whenever $n \geq
n_2$.  Now, for each excursion below 1, there exists a left end-point
$g$ and a right end-point $d$.  For all $x$ in the same excursion,
$g(x) = g$ and $d(x) = d$.  Suppose first that we have $g \neq -1$, $d
\neq 1$ (in this case we say that the excursion \emph{does not touch
  the boundary}).  Since 1 is not a local maximum of $f$, there must
exist $z_g < g$ and $z_d > d$ such that $|z_g-g| < \eta$, $|z_d - d| <
\eta$, $f(z_g)> 1$ and $f(z_d) > 1$.  

Suppose there are $N_{\delta}$ excursions below 1 of length greater
than $\delta$ which do not touch the boundary.  To excursion $i$ there
corresponds a left end-point $g_i$, a right end-point $d_i$
and points $z_{g_i}$, $z_{d_i}$, $1 \leq i \leq N_{\delta}$.
Write 
\[
\tilde{E}^{\downarrow}_{\delta} = \cup_{i=1}^{N_{\delta}}
(g_i,d_i) \text{ and }
\tilde{E}^{\downarrow}_{\delta,\eta} =
\tilde{E}^{\downarrow}_{\delta} \cap E^{\downarrow}_{\delta,\eta} =
\cup_{i=1}^{N_{\delta}} (g_i+\eta,d_i-\eta).
\]
Then $\min_{1 \leq i \leq N_{\delta}} (f(z_{g_i}) \wedge f(z_{d_i})) >
1$.  Since $f_n \to f$ uniformly, there exists $n_3$ such that
$\min_{1 \leq i \leq N_{\delta}} (f_n(z_{g_i}) \wedge f_n(z_{d_i})) >
1$ for all $n \geq n_3$.  So for $n \geq n_2 \vee n_3$ and any $x \in
\tilde{E}^{\downarrow}_{\delta,\eta}$, by the intermediate value
theorem, there exists at least one point $a_n(x) \in (g(x) - \eta,
g(x) + \eta)$ such that $f_n(a_n(x)) = 1$ and at least one point
$b_n(x) \in (d(x) - \eta, d(x) + \eta)$ such that $f_n(b_n(x)) = 1$.
Since $g(x)$ and $d(x)$ are constant on excursion intervals, it
follows that $\sup_{x \in \tilde{E}^{\downarrow}_{\delta,\eta}}
|g_n(x) - g(x)| < \eta$ and $\sup_{x \in
  \tilde{E}^{\downarrow}_{\delta,\eta}} |d_n(x) - d(x)| < \eta$ for $n
\geq n_2 \vee n_3$.  Hence, by Lemma~\ref{lem:chi},
\[
\sup_{x \in \tilde{E}^{\downarrow}_{\delta,\eta}} |\chi_A^1(x) -
\chi_{A_n}^1(x)| < \eta
\]
whenever $n \geq n_2 \vee n_3$.  Since $|\chi_A^1(x) - \chi_A^1(y)|
\leq |x-y|$ and $|\chi_{A_n}^1(x) - \chi_{A_n}^1(y)| \leq |x-y|$, by
using the triangle inequality we obtain that
\[
\sup_{x \in \tilde{E}^{\downarrow}_{\delta}} |\chi_A^1(x) -
  \chi_{A_n}^1(x)| < 3 \eta.
\]

It remains to deal with any excursions in $E^{\downarrow}_{\delta}$
which touch the boundary.  Clearly, there is at most one excursion in
$E^{\downarrow}_{\delta}$ touching the left boundary and at most one
excursion touching the right boundary.  For these excursions, we can
argue as before at the non-boundary end-points.  At the boundary
end-points, the argument is, in fact, easier since we have (by
construction) $\chi_{A}^1(-1) = \chi_{A_n}^1(-1) = 0$ and
$\chi_{A}^1(1) = \chi_{A_n}^1(1) = 0$.  So, there exists $n_4$ such
that for all $n \geq n_2 \vee n_3 \vee n_4$,
\[
\sup_{x \in E^{\downarrow}_{\delta}} |\chi_A^1(x) - \chi_{A_n}^1(x)| <
3 \eta.
\]

$\bullet$ For any $x \in R$, we have $\chi_A^1(x) \leq \delta/2$.
Moreover, since $\mathrm{Leb}(E^{\uparrow}_{\delta} \cup
E^{\downarrow}_{\delta}) > 2 - \epsilon/2$, there must exist a point
$z(x) \in R$ such that $|z(x)- x| < \epsilon/2$ which is the end-point
of an excursion interval (above or below 1) of length exceeding
$\delta$. So for all $x \in R$ and all $n$ we have
\begin{align*}
|\chi_A^1(x) - \chi_{A_n}^1(x)| & \leq |\chi_A^1(x)| + |\chi_{A_n}^1(x) -
\chi_{A_n}^1(z(x))| + |\chi_{A_n}^1(z(x)) - \chi_{A}^1(z(x))| \\
& \leq \delta/2 + |x-z(x)| + \sup_{y \in E^{\uparrow}_{\delta} \cup
  E^{\downarrow}_{\delta}} | \chi_{A_n}^1(y) - \chi_A^1(y)| \\
& < \delta/2 + \epsilon/2 + \sup_{y \in E^{\uparrow}_{\delta} \cup
  E^{\downarrow}_{\delta}} | \chi_{A_n}^1(y) - \chi_A^1(y)|
\end{align*}
(note that at the second inequality we use the continuity of $\chi_A^1$
and $\chi_{A_n}^1$ and the fact that $\chi^1_A(z(x)) = 0$).

$\bullet$ Finally, let $n_0 = n_1 \vee n_2 \vee n_3 \vee n_4$.  Then
since $\eta < \delta/6$ and $\delta < \epsilon/2$, for $n \geq n_0$ we
have
\[
\sup_{x \in [-1,1]} |\chi_A^1(x) - \chi_{A_n}^1(x)| < \epsilon.
\]
The result follows.
\end{proof}

The following lemma will be used implicitly in Section \ref{sec:convheightprocess}.

\begin{lemma} \label{lem:babyanalysis} Suppose that $a > 0$, $\alpha
  \in \R$ and that $f: \R \to \R^+$ is a continuous function.  Let
  $g(t) = a^{\alpha} f(at)$.  Then the function $(a,f) \mapsto g$ is
  continuous for the topology of uniform convergence on compacts.
\end{lemma}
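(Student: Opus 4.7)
The plan is to show sequential continuity: given $(a_n, f_n) \to (a, f)$ with $a_n \to a$ in $\R^+$ and $f_n \to f$ uniformly on compacts, I will prove that $g_n(t) := a_n^\alpha f_n(a_n t)$ converges to $g(t) := a^\alpha f(at)$ uniformly on every compact $[-K,K] \subset \R$.

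First I would fix $K > 0$ and choose $n_0$ large enough that $a_n \in [a/2, 2a]$ for all $n \geq n_0$. Then for $t \in [-K,K]$ the argument $a_n t$ lies in the fixed compact $J := [-2aK, 2aK]$. Next I would split
\[
|a_n^\alpha f_n(a_n t) - a^\alpha f(at)| \leq |a_n^\alpha - a^\alpha|\,|f_n(a_n t)| + a^\alpha\bigl(|f_n(a_n t) - f(a_n t)| + |f(a_n t) - f(at)|\bigr).
\]
Each of the three pieces is handled separately, and the supremum over $t \in [-K,K]$ is controlled by data that depends only on $J$.

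For the first piece, $x \mapsto x^\alpha$ is continuous on $\R^+$, so $|a_n^\alpha - a^\alpha| \to 0$; moreover, $|f_n(a_n t)| \leq \sup_{s \in J} |f_n(s)|$, and uniform convergence of $f_n$ to $f$ on $J$ together with continuity of $f$ gives a uniform bound $M$ on these suprema for $n \geq n_0$. For the second piece, $\sup_{t \in [-K,K]} |f_n(a_n t) - f(a_n t)| \leq \sup_{s \in J} |f_n(s) - f(s)| \to 0$, which is exactly the uniform convergence hypothesis on the compact $J$. For the third piece, $f$ is uniformly continuous on $J$ and $\sup_{t \in [-K,K]} |a_n t - at| \leq K |a_n - a| \to 0$, so $\sup_{t \in [-K,K]} |f(a_n t) - f(at)| \to 0$.

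Putting the three bounds together and letting $n \to \infty$ yields $\sup_{t \in [-K,K]} |g_n(t) - g(t)| \to 0$, which is convergence in the metric $d$ defined just before Proposition~\ref{prop:fnconvimpliessetconv}. There is no real obstacle here; the only point that requires a moment of care is ensuring that the arguments $a_n t$ all land in a single fixed compact set independent of $n$, which is why I restrict to $n \geq n_0$ and use the enlarged interval $J$.
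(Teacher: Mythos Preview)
Your proof is correct and follows essentially the same approach as the paper's: a three-term triangle-inequality decomposition, together with the observation that the rescaled arguments $a_n t$ (for $t$ in a fixed compact) all land in a single enlarged compact set once $n$ is large. The only cosmetic difference is that you factor out $a^\alpha$ and bound $|f_n(a_n t)|$ in the first term, whereas the paper factors out $a_n^\alpha$ and bounds $|f(at)|$; both variants work.
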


\begin{proof}
Let $f_n: \R \to \R^+$ be a
sequence of continuous functions with $f_n \to f$ uniformly on
compacts.  Suppose that $a_n$ is a sequence of reals with $a_n \to a >
0$.  Suppose $K \subset \R$ is a compact set.  Then we have
\begin{align*}
& \sup_{t \in K} | a^{\alpha} f(at) - a_n^{\alpha} f_n(a_n t)| \\
& \leq \sup_{t \in K} | a^{\alpha} f(at) - a_n^{\alpha} f(at)| 
+ \sup_{t \in K} |a_n^{\alpha} f(at) - a_n^{\alpha} f(a_n t)|
+ \sup_{t \in K} |a_n^{\alpha} f(a_n t) - a_n^{\alpha} f_n(a_n t)| \\
& \leq \sup_{t \in aK} |f(t)| |a^{\alpha} - a_n^{\alpha}|
+ a_n^{\alpha}  \sup_{t \in K} |f(at) - f(a_n t)|
+ a_n^{\alpha}  \sup_{t \in K} |f(a_n t) - f_n(a_n t)|.
\end{align*}
The set $aK$ is compact and so $f$ is bounded on it; it follows that
the first term converges to $0$.  Take $0 < \epsilon < a$.  Since $a_n
\to a$, there exists $n$ sufficiently large that $|a_n - a| <
\epsilon$.  Define $\tilde{K} = \{bt: t \in K, b \in [a - \epsilon,a +
\epsilon]\}$.  Then $\tilde{K}$ is also a compact set.  The second
term converges to $0$ because $f$ is uniformly continuous on
$\tilde{K}$.  The third term is bounded above by
$((a-\epsilon)^{\alpha} \vee (a+\epsilon)^{\alpha}) \sup_{t \in
  \tilde{K}} | f(t) - f_n(t)|$ and so, since $f_n \to f$ uniformly on
compacts, this converges to $0$.
\end{proof}

Finally, we will need the following lemma in Section \ref{sec:proof}.

\begin{lemma} \label{lem:cvLeb}
  Let $f: \mathbb{R} \rightarrow \mathbb{R}^+$ be a continuous
  function such that
\[
\mathrm{Leb}\{x \in \R : f(x)=1\}=0.
\] 
Suppose $f_n: \mathbb{R} \rightarrow \mathbb{R}^+$ is a sequence of
continuous functions that converges to $f$ uniformly on compacts.
Then, for all $K > 0$, as $n \rightarrow \infty$,
\[
\mathrm{Leb}\{x \in [-K,K] : f_n(x)<1\} \rightarrow \mathrm{Leb}\{x \in
[-K,K] : f(x)<1\}.
\]
\end{lemma}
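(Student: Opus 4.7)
The plan is to recast the claim as a convergence of integrals of indicator functions and then apply dominated convergence. Let $g_n(x) = \mathbbm{1}_{\{f_n(x) < 1\}}$ and $g(x) = \mathbbm{1}_{\{f(x) < 1\}}$, so that the quantities in question are $\int_{-K}^{K} g_n(x)\, \mathrm dx$ and $\int_{-K}^{K} g(x)\, \mathrm dx$. Since $|g_n| \leq 1$ and $[-K,K]$ has finite Lebesgue measure, the dominated convergence theorem will deliver the result as soon as I know that $g_n \to g$ almost everywhere on $[-K,K]$.

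The pointwise convergence is immediate off the level set $\{f = 1\}$: if $f(x) < 1$ then by uniform (indeed pointwise) convergence of $f_n$ to $f$ there exists $N$ with $f_n(x) < 1$ for all $n \geq N$, so $g_n(x) = 1 = g(x)$ eventually; symmetrically, if $f(x) > 1$ then $g_n(x) = 0 = g(x)$ eventually. Hence $g_n(x) \to g(x)$ for every $x$ outside $\{f = 1\}$, and by hypothesis this exceptional set has Lebesgue measure zero. Dominated convergence then yields $\int_{-K}^{K} g_n \to \int_{-K}^{K} g$, which is the claimed convergence of Lebesgue measures.

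There is no real obstacle here: the hypothesis $\mathrm{Leb}\{x : f(x) = 1\} = 0$ is precisely what is needed to ensure that the only points where $g_n(x)$ could fail to converge to $g(x)$ form a null set, and uniform convergence on compacts is in fact more than enough — pointwise convergence already suffices for the almost everywhere argument. So the proof reduces to these two short observations.
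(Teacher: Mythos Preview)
Your proof is correct. The paper takes a slightly different but equally short route: it uses the uniform convergence on $[-K,K]$ directly to sandwich the set $\{f_n < 1\}$ between $\{f < 1-\varepsilon\}$ and $\{f < 1+\varepsilon\}$ for large $n$, and then lets $\varepsilon \to 0$, using the hypothesis $\mathrm{Leb}\{f=1\}=0$ to conclude that both bounds converge to $\mathrm{Leb}\{f<1\}$. Your dominated-convergence argument instead reduces to pointwise convergence of the indicators off the null set $\{f=1\}$; as you observe, this only needs $f_n \to f$ pointwise, so it is slightly more general. The paper's squeeze avoids invoking DCT and makes the role of uniform convergence more visible; your approach trades that for a cleaner one-line reduction. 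Either way the lemma is immediate.
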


\begin{proof}
Let $K>0$ and fix $\varepsilon>0$. For all $n$ sufficiently large
\[
f(x)-\varepsilon \leq f_n(x) \leq f(x) + \varepsilon, \text{ for all
  $x \in [-K,K]$},
\]
hence
\begin{align*}
\mathrm{Leb}\{x \in [-K,K] : f(x)<1-\varepsilon\} 
& \leq \mathrm{Leb}\{x \in [-K,K] : f_n(x)<1\} \\ 
&\leq \mathrm{Leb}\{x \in [-K,K] : f(x)<1+\varepsilon\}. 
\end{align*}
When $\varepsilon \to 0$, the left-hand side of this inequality
converges to $\mathrm{Leb}\{x \in [-K,K] : f(x)<1\}$ and the right-hand side to
$\mathrm{Leb}\{x \in [-K,K] : f(x) \leq1\} $, which are equal by assumption.
\end{proof}


\subsection{Height processes} \label{subsec:heightprocesses}

Here, we define the stable height process and recall some of its
properties.  We refer to Le Gall and Le Jan \cite{LeGall/LeJan} and
Duquesne and Le Gall \cite{Duquesne/LeGall} for background.  (All of
the definitions and results stated without proof below may be found in
these references.)

Suppose that $X$ is a spectrally positive stable L\'evy process with
Laplace exponent $\lambda^{\beta}$, $\beta \in (1,2]$.  That is,
$\E{\exp(-\lambda X_t)}=\exp(t\lambda^{\beta})$ for all $\lambda,t
\geq 0$ and, therefore, if $\beta \in (1,2)$, the L\'evy measure of
$X$ is $\beta(\beta-1)(\Gamma(2-\beta))^{-1} x^{-\beta-1}\mathrm d x$,
$x>0$.  Let $I(t):=\inf_{0 \leq s \leq t} X(s)$ be the infimum process
of $X$.  For each $t>0$, consider the time-reversed process defined
for $0 \leq s <t$ by
\[
\hat{X}^{(t)}(s):=X(t)-X((t-s)-),
\]
and let $(\hat{S}^{(t)}(s),0 \leq s \leq t)$ be its supremum, that is
$\hat{S}^{(t)}(s) = \sup_{u \leq s} \hat{X}^{(t)}(u)$.  Then 
the height process $H(t)$ is defined to be the local time at $0$ of
$\hat{S}^{(t)}-\hat{X}^{(t)}$.

It can be shown that the process $H$ possesses a continuous version
(Theorem 1.4.3 of \cite{Duquesne/LeGall}), which we will implicitly
consider in the following.  The scaling property of $X$ is inherited
by $H$ (see Section 3.3 of \cite{Duquesne/LeGall}) as follows: for all
$a > 0$,
\begin{equation*}
  (a^{1/{\beta}-1}H(ax),x\geq 0) \equidist (H(x), x \geq 0).
\end{equation*}
When $\beta=2$, the height process is, up to a scaling factor, a
reflected Brownian motion.

The excursion measure of $X-I$ away from $0$ is denoted by $\N$. \emph{In
the following, we work under this excursion measure}.  Let
$\mathcal{E}$ be the space of excursions; that is, continuous
functions $f:\R^+ \to \R^+$ such that $f(0) = 0$, $\inf\{t > 0: f(t) =
0\} < \infty$ and if $f(s) = 0$ for some $s > 0$ then $f(t) = 0$ for
all $t > s$.  The lifetime of $H \in \mathcal{E}$ is then denoted by
$\sigma$, that is
\[
\sigma := \inf\{x > 0: H(x) = 0\}.
\] 
We define its maximum to be
\[
H_{\max}:=\max_{x \in [0,\sigma]} H(x).
\]
We will also deal with (regular versions of) the probability measures
$\N(\cdot| \sigma = v)$, $v > 0$ and $\N(\cdot | H_{\max} = m)$, $m >
0$.   The following proposition is implicit in Section 3 of Abraham
and Delmas~\cite{Abraham/Delmas} and is also a consequence of
Theorem~\ref{thm:largestfrag} (ii) below.

\begin{proposition} 
  For any $v > 0$, under $\N(\cdot| \sigma = v)$ there exists an almost
  surely unique point $x_{\max}$ at which $H$ attains its maximum,
  that is
\[
x_{\max}:=\inf \{x \in [0,\sigma]:H(x)=H_{\max} \}.
\]
\end{proposition}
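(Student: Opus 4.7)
The plan is to show that under $\N(\cdot \mid \sigma = v)$ the level set $\{x\in[0,\sigma] : H(x) = H_{\max}\}$ is almost surely a singleton, by combining two ingredients: (i) the law of $H_{\max}$ under $\N$ is atomless, and (ii) above any fixed level $t>0$ the height process decomposes into finitely many independent ``sub-excursions'' with i.i.d.\ maxima. Ingredient (i) can be read off from either the Ray--Knight description of $H$ or from the formulae in Abraham--Delmas \cite{Abraham/Delmas}, which give an explicit continuous density for $H_{\max}$ under $\N$; so $\N(H_{\max}=m)=0$ for every $m>0$.

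For ingredient (ii), I would work under the $\sigma$-finite measure $\N$ restricted to $\{H_{\max}>t\}$. By the standard branching/regenerative theory of the stable height process (Duquesne--Le~Gall), the open set $\{x : H(x)>t\}$ has almost surely finitely many components, and the rerooted excursions of $H$ above $t$ associated with these components are, conditionally on their number, i.i.d.\ copies of the generic excursion under $\N$ (up to an intensity factor). Applying (i) to their maxima, almost surely no two of these maxima coincide, so exactly one component of $\{H>t\}$ reaches the global maximum $H_{\max}$.

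The uniqueness of $x_{\max}$ then follows by contradiction: if $x_1<x_2$ both satisfied $H(x_1)=H(x_2)=H_{\max}$, then since $H$ has no intervals of constancy (a property inherited from the spectrally positive stable L\'evy process $X$, whose reflected supremum $\hat S^{(t)}-\hat X^{(t)}$ has no flat stretches at $0$), one would have $m:=\min_{[x_1,x_2]}H<H_{\max}$. Choosing any rational $t\in(m,H_{\max})$, the points $x_1$ and $x_2$ lie in distinct connected components of $\{H>t\}$, both attaining height $H_{\max}$, contradicting the previous step. To transfer the conclusion from the $\sigma$-finite measure $\N$ to the regular conditional probability $\N(\cdot\mid\sigma=v)$, I would use a disintegration: the a.s.\ property under $\N$ holds under $\N(\cdot\mid\sigma=v)$ for Lebesgue-a.e.\ $v$, and then the scaling relation $(H(x),0\le x\le\sigma)\equidist(v^{1/\beta-1}H^{(1)}(x/v),0\le x\le v)$ under $\N(\cdot\mid\sigma=v)$, with $H^{(1)}$ distributed according to $\N(\cdot\mid\sigma=1)$, extends the result to every $v>0$.

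The main obstacle is the careful invocation of excursion theory above a fixed level under the infinite excursion measure $\N$ (the i.i.d.\ structure and the atomlessness of the law of the excursion maxima); both statements are in the stable-tree literature but need to be quoted cleanly. Everything else is soft: no interval of constancy and a disintegration/scaling argument.
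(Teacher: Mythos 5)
Your strategy --- decompose $H$ above a fixed level $t$ into sub-excursions and exploit atomlessness of the law of $H_{\max}$ under $\N$ --- is the natural one and close in spirit to what the paper gestures at. The paper gives no self-contained proof here; it cites Section~3 of Abraham--Delmas and notes that the proposition also follows from Theorem~\ref{thm:largestfrag}(ii). In the proof of that theorem the same dichotomy is run at the level of fragments, with atomlessness of $\zeta$ established via a tagged fragment and Kesten's theorem; reading atomlessness of $H_{\max}$ directly off the explicit formula (\ref{eqn:tail1}) is a cleaner shortcut in the stable case.

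There are, however, two concrete problems. First (minor): for $1<\beta<2$ the set $\{x:H(x)>t\}$ has almost surely \emph{infinitely} many interval components, not finitely many --- the dislocation measure of the stable fragmentation is infinite, so the ranked state $F(t)$ has infinitely many blocks at every positive time. What you actually need, and what is true, is that only finitely many of these components have maximum exceeding $t+\varepsilon$, for each $\varepsilon>0$, because $\N(H_{\max}>\varepsilon)<\infty$ by (\ref{eqn:tail1}); that is already enough for the supremum of the component-maxima to be attained and, by atomlessness, attained by a unique component. Second, and more seriously: your justification of ``$H$ has no interval of constancy'' is not valid. For each fixed $t$, $H(t)$ is defined as the \emph{local time at level $0$} of the time-reversed reflected process $\hat{S}^{(t)}-\hat{X}^{(t)}$; it is a single random variable built from that process, not the reflected process itself. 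Whether $\hat{S}^{(t)}-\hat{X}^{(t)}$ has flat stretches at $0$ (a property in the $s$ variable at fixed $t$) says nothing about whether the map $t\mapsto H(t)$ has plateaux, and local times of processes generically \emph{do} have intervals of constancy. The absence of plateaux of $H$ is true but needs a separate argument --- for instance, a plateau of $H$ on $[a,b]$ at level $m$ would force $H(a)+H(b)-2\min_{[a,b]}H=0$, so the canonical projection onto the stable tree would collapse $[a,b]$ to a single point and create an atom of the tree's mass measure. This step cannot be omitted: your decomposition uses only levels strictly below $H_{\max}$, so it cannot by itself rule out $\{x:H(x)=H_{\max}\}$ being a nondegenerate interval.
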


Note that $\mathbf{e}$, $\zeta$ and $x_*$ (see Section
\ref{sec:stable}) have the distributions of $H$, $H_{\max}$ and
$x_{\max}$ respectively under $\N( \cdot |\sigma=1)$.

First we note the tails of certain measures.
\begin{proposition} \label{prop:tails}
For all $m>0$,
\begin{gather} 
\mathbb N\left( H_{\max}>m\right)=(\beta-1)^{1/(1-\beta)}m^{\frac{1}{1-\beta}},
\label{eqn:tail1} \\
\mathbb N\left(\sigma>m\right)=(\Gamma(1-1/\beta))^{-1}m^{-\frac{1}{\beta}}. 
\label{eqn:tail2}
\end{gather}
\end{proposition}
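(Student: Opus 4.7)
My plan for proving Proposition~\ref{prop:tails} is to exploit the standard identification (see Chapter~1 of Duquesne--Le Gall~\cite{Duquesne/LeGall}) of the height-process excursion measure $\mathbb{N}$ with the genealogy of a continuous-state branching process (CSBP) of branching mechanism $\psi(\lambda)=\lambda^{\beta}$, combined, for the second identity, with a direct Laplace-transform inversion.

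For~\eqref{eqn:tail1}, I would set $v(m):=\mathbb{N}(H_{\max}>m)$. Applying the strong Markov property of the height excursion at its first hitting time of level $m$ (equivalently, using that $\exp(-x v(m))$ is the probability that a CSBP with mechanism $\psi$ issued from mass $x$ is extinct by time $m$), one obtains the autonomous ODE
\[
v'(m)=-\psi(v(m))=-v(m)^{\beta},\qquad v(0+)=+\infty.
\]
Separating variables yields $v(m)^{1-\beta}/(1-\beta)=-m+C$; since $1-\beta<0$, the boundary condition $v(0+)=+\infty$ forces $C=0$, whence $v(m)=(\beta-1)^{1/(1-\beta)}\,m^{1/(1-\beta)}$, which is exactly~\eqref{eqn:tail1}.

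For~\eqref{eqn:tail2}, I would start from the equally classical Laplace transform
\[
\mathbb{N}\bigl(1-e^{-\lambda\sigma}\bigr)=\psi^{-1}(\lambda)=\lambda^{1/\beta},\qquad\lambda\geq 0,
\]
recorded as Corollary~1.4.2 of~\cite{Duquesne/LeGall}. Invoking the Bernstein/Frullani representation
\[
\lambda^{\gamma}=\frac{\gamma}{\Gamma(1-\gamma)}\int_{0}^{\infty}(1-e^{-\lambda m})\,m^{-\gamma-1}\,\mathrm{d}m,\qquad 0<\gamma<1,
\]
with $\gamma=1/\beta$ identifies the image of $\mathbb{N}$ under $\sigma$ as $\mathbb{N}(\sigma\in\mathrm{d}m)=(\beta\,\Gamma(1-1/\beta))^{-1}\,m^{-1/\beta-1}\,\mathrm{d}m$, and integrating this density from $m$ to $\infty$ yields~\eqref{eqn:tail2}. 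There is no serious obstacle here beyond bookkeeping: scaling of $H$ inherited from $X$ already forces both tails to be pure power laws, and the content of the proposition is the identification of the two multiplicative constants, which is transparent from the ODE and the Laplace inversion above; one only needs to take care that the signs of $1-\beta$ and the correct branch of $\psi^{-1}$ are matched to the initial conditions.
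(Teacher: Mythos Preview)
Your proof is correct and essentially coincides with the paper's: the paper simply cites Duquesne--Le~Gall for~\eqref{eqn:tail1} (behind which lies exactly the ODE $v'=-v^{\beta}$ that you solve), and for~\eqref{eqn:tail2} it identifies $\mathbb{N}(\sigma\in\cdot)$ with the L\'evy measure of the inverse local-time subordinator $J(t)=\inf\{u:-I(u)>t\}$, which is equivalent to your Laplace inversion since $\mathbb{N}(1-e^{-\lambda\sigma})$ is precisely the Laplace exponent of $J$. The only quibble is that the paper attributes Corollary~1.4.2 of~\cite{Duquesne/LeGall} to~\eqref{eqn:tail1} rather than to the Laplace identity for $\sigma$, so you may want to double-check that pointer.
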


\begin{proof}
  For (\ref{eqn:tail1}) see, for example, Corollary 1.4.2 and Section
  2.7 of Duquesne and Le Gall~\cite{Duquesne/LeGall}.  It is well
  known (Theorem 1, Section VII.1 of \cite{BertoinLevy}) that the
  right inverse process $J = (J(t), t \geq 0)$ of $I$ defined by
  $J(t):=\inf\{u \geq 0: -I(u) > t\}$ is a stable subordinator with a
  L\'evy measure $(\beta\Gamma(1-1/\beta))^{-1}x^{-1-1/{\beta}}
  \mathrm d x$, $x>0$.  Since $\N(\sigma \in \mathrm{d}m)$ is equal to
  this L\'evy measure, (\ref{eqn:tail2}) follows.
\end{proof}

Recall that $\alpha = 1/\beta - 1$.  We will, henceforth, primarily
work in terms of $\alpha$ rather than $\beta$.  We will make extensive
use of the scaling properties of the height function under the
excursion measure.  For $m>0$, let $H^{[m]}(x) = m^{-\alpha}H(x/m)$.
Note that if $H$ has lifetime $\sigma$ then $H^{[m]}$ has lifetime $m
\sigma$ and maximum height $m^{-\alpha} H_{\max}$.  Note also that
$(H^{[m]})^{[a]} = H^{[ma]}$, for all $m,a>0$. The following
proposition, which summarizes results from Section 3.3 of Duquesne and Le Gall~\cite{Duquesne/LeGall}, gives a version of the
scaling property for the height process conditioned on its lifetime.

\begin{proposition} \label{prop:Miermont}
For any test function $f: \mathcal{E} \to \R$ and any $x,m > 0$, we have
\[
\N[f(H^{[m]}) | \sigma = x/m] = \N[f(H)| \sigma = x].
\]
Moreover, for any $\eta > 0$,
\[
\N[f(H) | \sigma = x] = \N[f(H^{[x/\sigma]}) | \sigma > \eta].
\]
\end{proposition}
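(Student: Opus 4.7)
The plan is to derive both identities from the scaling property of the excursion measure $\N$, itself inherited from the scaling of $X$. The scaling $(a^{-1/\beta} X(at))_{t \geq 0} \equidist (X(t))_{t \geq 0}$, combined with the construction of $H$ from the time-reversed, reflected L\'evy process (see Section 3.3 of \cite{Duquesne/LeGall}), gives $(a^{1/\beta - 1} H(at))_{t \geq 0} \equidist (H(t))_{t \geq 0}$ under $\mathbf{P}$. Via It\^o's excursion theory for $X - I$, this translates into a scaling relation for the $\sigma$-finite excursion measure $\N$: for every non-negative measurable $F: \mathcal{E} \to \R^+$ and every $m > 0$,
\[
\N[F(H^{[m]})] = m^{1/\beta}\, \N[F(H)].
\]
The multiplicative constant $m^{1/\beta}$ is forced by the form of the L\'evy measure $\N(\sigma \in \mathrm dv) = c\, v^{-1-1/\beta}\, \mathrm dv$ recorded in Proposition~\ref{prop:tails}: pushing forward by $\sigma \mapsto m\sigma$ produces exactly this factor.

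For the first identity, apply the scaling relation with $F(H) = f(H) g(\sigma)$, noting that $H^{[m]}$ has lifetime $m \sigma$. Disintegrating over $\sigma$ on each side and performing the change of variable $u = m v$ on the left yields
\[
\int_0^\infty \N[f(H^{[m]}) \mid \sigma = u/m]\, g(u)\, \N(\sigma \in \mathrm du) = \int_0^\infty \N[f(H) \mid \sigma = u]\, g(u)\, \N(\sigma \in \mathrm du).
\]
Since $g$ is arbitrary, $\N[f(H^{[m]}) \mid \sigma = x/m] = \N[f(H) \mid \sigma = x]$ for $\N(\sigma \in \cdot)$-almost every $x$, and hence for every $x > 0$ upon taking jointly regular versions of the conditional laws.

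For the second identity, set $m = x/v$ in the first: for every $v > 0$,
\[
\N[f(H^{[x/v]}) \mid \sigma = v] = \N[f(H) \mid \sigma = x],
\]
whose right-hand side is independent of $v$. Integrating the left-hand side against the conditional probability law of $\sigma$ given $\sigma > \eta$ therefore preserves the right-hand side and gives
\[
\N[f(H^{[x/\sigma]}) \mid \sigma > \eta] = \N[f(H) \mid \sigma = x],
\]
which is the claim.

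The one delicate point is the promotion of the almost-everywhere identity obtained in the first step to an identity valid for every $x > 0$. This requires selecting the regular conditional laws $\N[\cdot \mid \sigma = v]$ consistently as $v$ varies, which is standard and is precisely the way in which these conditional laws are constructed in the references cited; apart from this measure-theoretic bookkeeping, the argument is a direct consequence of the scaling of $\N$.
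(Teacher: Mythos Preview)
Your argument is correct. The paper does not actually prove this proposition; it is stated as a summary of results from Section~3.3 of Duquesne and Le~Gall~\cite{Duquesne/LeGall}, so there is no proof to compare against. Your derivation from the scaling identity $\N[F(H^{[m]})] = m^{1+\alpha}\,\N[F(H)]$ (which is exactly the relation the paper records immediately afterwards in Lemma~\ref{lem:scaling}, since $1+\alpha = 1/\beta$) is the standard route, and both steps---the disintegration over $\sigma$ with the change of variable $u = mv$, and the subsequent averaging over $\{\sigma > \eta\}$ after specializing $m = x/v$---are carried out correctly. Your remark about upgrading the almost-everywhere conclusion to every $x>0$ via a jointly regular choice of conditional laws is precisely the point that has to be handled, and the references do construct the family $\N(\cdot \mid \sigma = v)$ in this way.
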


We now state two lemmas that will play an essential role in the proof
of Theorem \ref{thm:stablefragfonc}. The first lemma gives the scaling
property for $H$ conditioned on its maximum.

\begin{lemma} \label{lem:scaling} Let $f: \mathcal{E} \times \R^+
  \times \R^+ \to \R$ be any test function.  For all $m > 0$,
\[
\N[f(H^{[m]}, m \sigma, m^{-\alpha} H_{\max})] = m^{1 + \alpha}
\N[f(H, \sigma, H_{\max})].
\]
Moreover, for all $x, a > 0$,
\[
\N[f(H^{[x/\sigma]}, \sigma, H_{\max})] = a^{-1-\alpha}
\N[f(H^{[x/\sigma]}, a \sigma, a^{-\alpha} H_{\max})].
\]
In particular, for any test function $g: \mathcal{E} \times \R^+ \to
\R$,
\[
\N[g(H^{[x]}, \sigma) | H_{\max} = u] 
= \N[g(H, x^{-1} \sigma) | H_{\max} = x^{-\alpha} u]
\]
and
\[
\N[g(H^{[x/\sigma]}, \sigma) | H_{\max} = u] 
= \N[g(H^{[x/\sigma]}, x^{-1} \sigma) | H_{\max} = x^{-\alpha} u].
\]
\end{lemma}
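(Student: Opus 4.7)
\textbf{Plan for the proof of Lemma \ref{lem:scaling}.}

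The plan is to derive all four identities from Proposition \ref{prop:Miermont} together with the explicit tails from Proposition \ref{prop:tails}, essentially by disintegrating the $\sigma$-finite measure $\N$ along its marginals $\sigma$ and $H_{\max}$ and tracking the Jacobians produced by the power-law densities.

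First I will establish the unconditional identity. The key observation is that, because $\sigma(H^{[m]}) = m\sigma(H)$ and $H_{\max}(H^{[m]}) = m^{-\alpha} H_{\max}(H)$, the integrand $f(H^{[m]}, m\sigma, m^{-\alpha} H_{\max})$ depends on $H$ only through $H^{[m]}$, say $\tilde f(H^{[m]})$. Disintegrating $\N$ with respect to $\sigma$ and applying Proposition \ref{prop:Miermont},
\[
\N[\tilde f(H^{[m]})] = \int_0^\infty \N[\tilde f(H)\,|\,\sigma = mv]\,\N(\sigma \in dv).
\]
By Proposition \ref{prop:tails} and the identity $1/\beta = 1+\alpha$, $\N(\sigma \in dv)$ has density proportional to $v^{-2-\alpha}$, so the substitution $v' = mv$ produces exactly the multiplicative factor $m^{1+\alpha}$, giving the first identity.

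Next, the second identity follows by applying the first to $F(h,s,M) := f(h^{[x/s]}, s, M)$. By the composition property $(H^{[m]})^{[x/(m\sigma)]} = H^{[m \cdot x/(m\sigma)]} = H^{[x/\sigma]}$ (recalled explicitly before Proposition \ref{prop:Miermont}), we get $F(H^{[m]}, m\sigma, m^{-\alpha}H_{\max}) = f(H^{[x/\sigma]}, m\sigma, m^{-\alpha}H_{\max})$, while $F(H,\sigma,H_{\max}) = f(H^{[x/\sigma]},\sigma,H_{\max})$; relabelling $m$ as $a$ yields the claim.

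For the two conditional identities, I will disintegrate the unconditional ones with respect to $H_{\max}$. From Proposition \ref{prop:tails}, $\N(H_{\max} \in du)$ has density proportional to $u^{1/(1-\beta)-1} = u^{1/\alpha}$ (using $1/(1-\beta) = 1+1/\alpha$). Applying the first identity with the test function $f(h,s,M) = \tilde g(h,s/m)\phi(M)$ and then changing variables $w = m^{-\alpha} u$ in the disintegration of both sides gives a Jacobian $m^{1+\alpha}$ from the $H_{\max}$-density which cancels the prefactor $m^{1+\alpha}$ already present. Matching the remaining integrands test-function-by-test-function in $\phi$ and renaming $w \mapsto x^{-\alpha} u$ yields statement three; the same manoeuvre starting from the second identity (and using the invariance of $H^{[x/\sigma]}$ under composition) gives statement four.

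The hardest part here is not any deep probability input but the sign-and-exponent bookkeeping: I will need to verify carefully that the density exponents $-2-\alpha$ for $\sigma$ and $1/\alpha$ for $H_{\max}$ combine with the scaling factors $m^{-\alpha}$, $m$, etc. so that all Jacobians cancel in exactly the right way, in particular that the prefactor $m^{1+\alpha}$ of the unconditional scaling disappears in the conditional versions. No further probabilistic ingredient beyond Propositions \ref{prop:Miermont} and \ref{prop:tails} is required.
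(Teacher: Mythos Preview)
Your proposal is correct and follows essentially the same route as the paper: disintegrate $\N$ along $\sigma$ using Proposition~\ref{prop:Miermont} and the power-law tail of Proposition~\ref{prop:tails} to get the first identity, then deduce the second by composing with $h \mapsto h^{[x/s]}$, and finally read off the conditional versions by disintegrating along $H_{\max}$. The paper is terser (it writes out only the first identity and declares the rest ``follow easily''), but your more explicit bookkeeping of the Jacobians and the composition $(H^{[m]})^{[x/(m\sigma)]} = H^{[x/\sigma]}$ is exactly what is needed to fill in those steps.
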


\begin{proof}
By conditioning on the value of $\sigma$ and the tails in
Proposition~\ref{prop:tails}, we have
\[
\N[f(H^{[m]}, m \sigma, m^{-\alpha} H_{\max})]
= c \int_{\R^+} \N[f(H^{[m]}, mb, m^{-\alpha} H_{\max}) | \sigma = b]
b^{-\alpha-2} \mathrm{d} b,
\]
for some constant $c$.  By Proposition \ref{prop:Miermont},
\[
 \int_{\R^+} \N[f(H^{[m]}, mb, m^{-\alpha} H_{\max}) | \sigma = b]
b^{-\alpha-2} \mathrm{d} b 
= \int_{\R^+} \N[f(H,mb,H_{\max}) | \sigma = mb]
b^{-\alpha-2} \mathrm{d} b.
\]
Changing variable with $a = mb$ gives
\[
c m^{1 + \alpha} \int_{\R^+} \N[f(H,a,H_{\max}) | \sigma = a]
a^{-\alpha-2} \mathrm{d} a = m^{1+\alpha} \N[f(H, \sigma, H_{\max})],
\]
which yields the first statement.  The second statement is a
consequence of the first and the conditioned statements follow easily.
\end{proof}

Finally, we relate the law of $H$ conditioned on its lifetime and the
law of $H$ conditioned on its maximum.  For the rest of the paper, $c$
denotes a positive finite constant that may vary from line to line.

\begin{lemma} \label{lem:sigmaH_max} For all test
  functions $f: \mathcal{E} \to \R$ and all $x > 0$,
\[
\N[f(H) | \sigma = x] = \Gamma(-\alpha) \left(\frac{-\alpha}{ \alpha + 1}\right)^{1/\alpha} \int_{\R^{+}} \N[f(H^{[x/\sigma]}) \I{\sigma
  > x} | H_{\max} = x^{-\alpha} u]
u^{1/\alpha} \mathrm{d}u.
\]
Moreover, for all non-negative test functions $g: \R^+ \to \R$,
\[
\N[g(\sigma^{\alpha}) | H_{\max} = 1] 
= \frac{\N[g(H_{\max}) H_{\max}^{-1/\alpha - 1} | \sigma = 1]}
       {\N[H_{\max}^{-1/\alpha - 1} | \sigma = 1]}.
\]
\end{lemma}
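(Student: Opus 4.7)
For the first identity, I would start from the second display in Proposition~\ref{prop:Miermont} with $\eta = x$, giving $\N[f(H) \mid \sigma = x] = \N[f(H^{[x/\sigma]}) \mid \sigma > x]$. Writing this as a ratio $\N[f(H^{[x/\sigma]}) \I{\sigma > x}] / \N(\sigma > x)$ and disintegrating the numerator against the law of $H_{\max}$ under $\N$,
\[
\N[f(H^{[x/\sigma]}) \I{\sigma > x}]
= \int_0^\infty \N[f(H^{[x/\sigma]}) \I{\sigma > x} \mid H_{\max} = v] \, \N(H_{\max} \in \mathrm{d}v),
\]
puts the integral in the form claimed after the change of variable $v = x^{-\alpha} u$. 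The constants then come from Proposition~\ref{prop:tails}: differentiating the tail of $H_{\max}$ yields $\N(H_{\max} \in \mathrm{d}v) = -\tfrac{1+\alpha}{\alpha}(\beta-1)^{(1+\alpha)/\alpha} v^{1/\alpha} \mathrm{d}v$, and $\N(\sigma > x) = \Gamma(-\alpha)^{-1} x^{-(1+\alpha)}$. Using the identity $\beta-1 = -\alpha/(1+\alpha)$ collapses the two powers of $\beta-1$ and $-(1+\alpha)/\alpha$ into the single factor $(-\alpha/(\alpha+1))^{1/\alpha}$, recovering the stated prefactor $\Gamma(-\alpha)(-\alpha/(\alpha+1))^{1/\alpha}$.

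I would then deduce the second identity from the first. Apply the first identity with $x=1$ to $f(H) = g(H_{\max})$: since $H^{[1/\sigma]}_{\max} = \sigma^{\alpha} H_{\max}$, on $\{H_{\max}=u\}$ we have $f(H^{[1/\sigma]}) = g(\sigma^{\alpha} u)$. Next I would transfer the conditioning to $H_{\max} = 1$ via the third display of Lemma~\ref{lem:scaling} taken with $x = u^{1/\alpha}$ and test function $\tilde g(s) = g(s^{\alpha} u) \I{s > 1}$, which yields
\[
\N[g(\sigma^{\alpha} u) \I{\sigma > 1} \mid H_{\max} = u]
= \N[g(\sigma^{\alpha}) \I{\sigma > u^{1/\alpha}} \mid H_{\max} = 1].
\]
Substituting $s = u^{1/\alpha}$ in the outer integral (this inverts orientation since $\alpha<0$, contributing a factor $-\alpha$) and applying Fubini to evaluate $\int_0^{\sigma} s^{\alpha} \mathrm{d}s = \sigma^{\alpha+1}/(\alpha+1)$ converts the first identity, at $x=1$, into
\[
\N[g(H_{\max}) \mid \sigma = 1]
= C' \, \N[g(\sigma^{\alpha}) \sigma^{\alpha+1} \mid H_{\max} = 1]
\]
for $C' = \Gamma(-\alpha)(-\alpha/(\alpha+1))^{(1+\alpha)/\alpha}$. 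Replacing $g(y)$ by $\tilde g(y) y^{-1/\alpha - 1}$ makes $\sigma^{\alpha+1}$ cancel with $(\sigma^\alpha)^{-1/\alpha-1}$ on the right, producing $\N[\tilde g(H_{\max}) H_{\max}^{-1/\alpha-1} \mid \sigma = 1] = C'\, \N[\tilde g(\sigma^{\alpha}) \mid H_{\max} = 1]$; taking $\tilde g \equiv 1$ identifies $C'$ with $\N[H_{\max}^{-1/\alpha-1} \mid \sigma = 1]$, and dividing gives the size-biased formula.

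The main obstacle is essentially careful bookkeeping. In the first identity the delicate point is that the Jacobian of $v = x^{-\alpha} u$, the explicit density of $H_{\max}$, and the tail of $\sigma$ must combine \emph{exactly} to produce $\Gamma(-\alpha)(-\alpha/(\alpha+1))^{1/\alpha}$, which relies on the non-obvious simplification $-\tfrac{1+\alpha}{\alpha}(\beta-1)^{(1+\alpha)/\alpha} = (-\alpha/(\alpha+1))^{1/\alpha}$. In the second identity the key technical step is the Fubini argument that extracts the power $\sigma^{\alpha+1}$ from $\int_0^{\sigma} s^\alpha \mathrm{d}s$; once this power is in hand, the size-biasing is immediate. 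Conceptually, the second identity expresses the fact that $H_{\max} \sigma^{\alpha}$ is invariant under the scaling $H \mapsto H^{[m]}$, so the two distributions of this ratio obtained by conditioning respectively on $\sigma = 1$ and on $H_{\max} = 1$ must differ by precisely the Jacobian-type factor $H_{\max}^{-1/\alpha-1}$.
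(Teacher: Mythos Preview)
Your argument is correct and follows essentially the same route as the paper. The only minor tactical differences are that the paper takes $\eta=1$ in Proposition~\ref{prop:Miermont} and then invokes the last display of Lemma~\ref{lem:scaling} to reach the conditioning on $\{H_{\max}=x^{-\alpha}u\}$ (whereas you take $\eta=x$ and change variables directly), and that for the second identity the paper inserts the weight $H_{\max}^{-1/\alpha-1}$ at the outset and cancels it via Fubini on $\int_{\sigma^{\alpha}}^{\infty} u^{1/\alpha}\,\mathrm{d}u$, while you first derive the intermediate identity with the factor $\sigma^{\alpha+1}$ and then substitute $g(y)\mapsto \tilde g(y)y^{-1/\alpha-1}$; these are equivalent reorderings of the same computation.
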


\begin{proof}
  Taking $\eta = 1$ in the second statement of Proposition
  \ref{prop:Miermont}, we see that $\N[f(H) | \sigma = x]$ is
  equal to $\N[f(H^{[x/\sigma]}) \I{\sigma > 1}]/\mathbb N(\sigma>1)$. Then,
  conditioning on the value of $H_{\max}$ and using (\ref{eqn:tail1}), we have
\[
\N[f(H^{[x/\sigma]}) \I{\sigma > 1}] = \left(\frac{-\alpha}{\alpha+1}\right)^{1/\alpha} \int_{\R^{+}}
\N[f(H^{[x/\sigma]}) \I{\sigma > 1} | H_{\max} = u]
u^{1/\alpha} \mathrm{d}u.
\]
By the final statement of Lemma~\ref{lem:scaling}, we see that this is
equal to
\[
\left(\frac{-\alpha}{\alpha+1}\right)^{1/\alpha} \int_{\R^{+}}
\N[f(H^{[x/\sigma]}) \I{\sigma > x} | H_{\max} = x^{-\alpha} u]
u^{1/\alpha} \mathrm{d}u.
\]
The first statement follows by noting from (\ref{eqn:tail2}) that $\N(\sigma > 1) = \Gamma(-\alpha)^{-1}$.

In order to prove the second statement in the lemma, note that by the
first statement we have
\[
\N[g(H_{\max}) H_{\max}^{-1/\alpha - 1} | \sigma = 1]
= c \int_{\R^+} \N[g(\sigma^{\alpha} H_{\max}) \sigma^{-1-\alpha}
H_{\max}^{-1/\alpha - 1} \I{\sigma > 1} |
H_{\max} = u] u^{1/\alpha} \mathrm{d} u.
\]
By Lemma~\ref{lem:scaling}, we have that
\[
\N[g(\sigma^{\alpha} H_{\max}) \I{\sigma > 1} \sigma^{-1-\alpha}
H_{\max}^{-1/\alpha - 1} | H_{\max} = u] 
= \N[g(\sigma^{\alpha}) \sigma^{-1-\alpha} \I{\sigma > u^{1/\alpha}} |
H_{\max} = 1],
\]
for all $u > 0$.  Hence, by Fubini's theorem,
\begin{align*}
\N[g(H_{\max}) H_{\max}^{-1/\alpha - 1} | \sigma = 1]
& = c \N\left[g(\sigma^{\alpha}) \sigma^{-1-\alpha}
\int_{\sigma^{\alpha}}^{\infty} u^{1/\alpha} \mathrm{d}u \bigg|
H_{\max} = 1\right] \\
& = c \N[g(\sigma^{\alpha}) | H_{\max} = 1].
\end{align*}
The result follows.
\end{proof}

\subsection{Williams' decomposition} 

\label{sec:Williams}

Except in the Brownian case, the height process is not Markov.
However, a version of it can be reconstructed from a measure-valued
Markov process $\rho$, called the \emph{exploration process} (see Le
Gall and Le Jan \cite{LeGall/LeJan} or Section 0.3 of Duquesne and Le
Gall \cite{Duquesne/LeGall} for a definition), by taking $H(t)$ to be
the supremum of the topological support of $\rho(t)$. Abraham and
Delmas \cite{Abraham/Delmas} give a decomposition of the height
process $H$ (more precisely, of the continuum random tree coded by this
height process) in terms of this exploration process.  This
decomposition is the analogue of Williams' decomposition for the
Brownian excursion discussed earlier.  We recall their result below
but we state it in terms of the height process.  This is somewhat less
precise, but is sufficient for our purposes and easier to state.

\noindent \textbf{Notation.}  Take an arbitrary
 point measure $\mu=\sum_{i\geq 1}a_i\delta_{t_i}$ on
$(0,\infty)$.  Now consider, for each $i \geq 1$, an independent
Poisson point process on $\R^+ \times \mathcal{E}$ of intensity
$\mathrm{d}u \N( \cdot ,H_{\max}\leq t_i )$, having points
$\{(u_j^{(i)},f_j^{(i)}),j \geq 1\}$.  For each $i \geq 1$, define a
subordinator $\tau^{(i)}$ by
\[
\tau^{(i)}(u)=\sum_{j: u_j^{(i)} \leq u} \sigma(f_j^{(i)}), \quad u \geq 0,
\]
where for any excursion $f$, $\sigma(f)$ denotes its length.  Note
that the L\'evy measure of this subordinator integrates the function
$1 \wedge x$ on $\mathbb R^+$ since $\mathbb N[1\wedge \sigma,
H_{\text{max}}\leq t_i] \leq \mathbb N[1\wedge \sigma]$, which is finite by
Proposition~\ref{prop:tails}. Hence $\tau^{(i)}(u)<\infty$ for all $u
\geq 0$ a.s.

We will need the function $H^{(i)}$, defined on $[0,\tau^{(i)}(a_i)]$
by
\begin{equation}
\label{eqn:Hdefinition}
H^{(i)}(x) := \sum_{j: u_j^{(i)} \leq a_i} f_j^{(i)}(x-\tau^{(i)}(u_j^{(i)}-))
\I{\tau^{(i)}(u_j^{(i)}-) < x \leq \tau^{(i)}(u_j^{(i)})}.
\end{equation}
The process $H^{(i)}$ can be viewed as a collection of excursions of
$H$ conditioned to have heights lower than $t_i$ and such that the local
time of $H^{(i)}$ at $0$ is $a_i$. 

We will now use this set-up in the situation of interest.  Let
$\rho:=\sum_{i \geq 1} \delta_{(v_i,r_i,t_i)}$ be a Poisson point
measure on $[0,1]\times \mathbb R^+ \backslash \{0\} \times \mathbb
R^+ \backslash \{0\}$ with intensity
\begin{equation*}
\frac{\beta(\beta-1)}{\Gamma(2-\beta)}
\exp(-rc_{\beta}t^{1/(1-\beta)})r^{-\beta}\mathrm d v\mathrm d r
\mathrm dt,
\end{equation*}
where $c_{\beta}=(\beta-1)^{1/(1-\beta)}$.  Conditionally on the
 point measures $\sum_{i \geq 1}(1-v_i)r_i \delta_{t_i}$ and $\sum_{i
   \geq 1}v_ir_i \delta_{t_i} $, use them to define two independent collections of
 independent processes $\{H_+^{(i)}, i \geq 1\}$ and $\{H_{-}^{(i)}, i
 \geq 1\}$ respectively, as in (\ref{eqn:Hdefinition}).  We now glue
 these together in order to define a function $H_{\infty}$ on $\R$.
 For $u \geq 0$, set
\[
\eta^{+}(u):=\sum_{i: t_i \leq u}\tau^{(i)}_+((1-v_i)r_i), \quad 
\eta^{-}(u):=\sum_{i: t_i \leq u}\tau^{(i)}_-(v_ir_i).
\]
It is not obvious that $\eta^+(u)$ and $\eta^-(u)$ are almost surely
finite for all $u\geq0$. This is a consequence of the forthcoming
Theorem \ref{thm:CVHcenter}. It can also be proved via Campbell's
theorem for Poisson point processes.  Now set
\begin{align*}
H_{\infty}(x) := & \left(\sum_{i \geq 1} 
\left[t_i-H_{+}^{(i)}(x-\eta^{+}(t_i-))\right] 
\I{\eta^{+}(t_i-) < x \leq \eta^{+}(t_i)} \right) \I{x \geq 0}  \\
& + \left( \sum_{i \geq 1} \left[t_i-H_{-}^{(i)}(-x-\eta^{-}(t_i-))\right] 
\I{\eta^{-}(t_i-) < -x \leq \eta^{-}(t_i)} \right) \I{x < 0}.
\end{align*}

\begin{figure}
\begin{center}
\psfrag{m}{$m$}
\psfrag{f}{$\eta^+(m)$}
\psfrag{e}{$-\eta^-(m)$}
\psfrag{a}{$t_l$}
\psfrag{b}{$t_k$}
\psfrag{c}{$t_j$}
\psfrag{d}{$t_i$}
\includegraphics{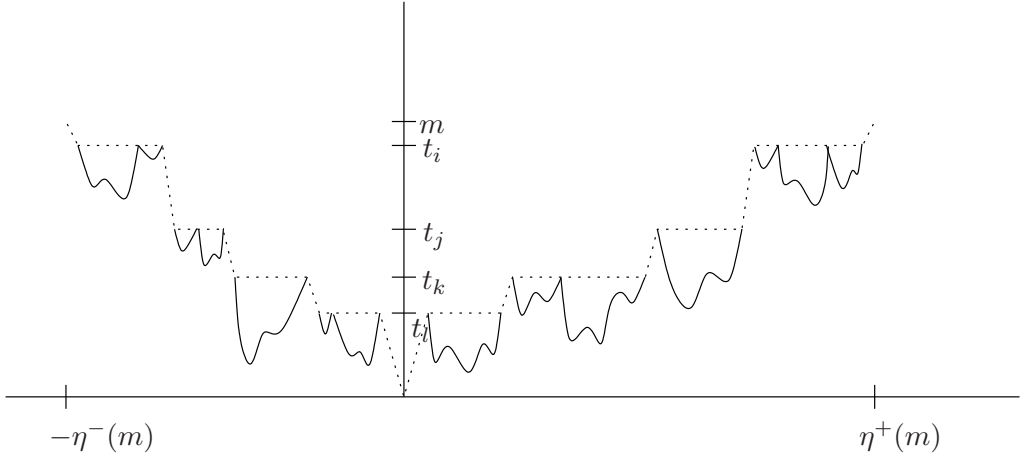}
\end{center}
\caption{Schematic drawing of $H_{\infty}$, with the one-sided running
  suprema indicated by dotted lines.}
\label{fig:Hinfty}
\end{figure}

Note that almost surely for all $u>0$, 
\begin{equation*} \label{etaplus}
\eta^+(u)=\inf\{x>0: H_{\infty}(x)>u\}
\end{equation*}
or, equivalently, the right inverse of $\eta^+$ is equal to the
supremum process 
\[
\left(\sup_{0 \leq y \leq x}H_{\infty}(y), x \geq 0\right).
\]
Symmetrically,
\begin{equation*}\label{etamoins}
\eta^-(u)=\sup\{x<0: H_{\infty}(x)>u\}
\end{equation*} 
and the right inverse of $\eta^-$ is the supremum process 
\[
\left(\sup_{-x \leq y \leq 0} H_{\infty}(y), x \geq 0\right).
\]
Roughly, the construction of $H_{\infty}$ works as follows:
conditional on the supremum (and for each value of the supremum), we
take a collection of independent excursions below that supremum which
are conditioned not to go below the $x$-axis, and which have total
local time $r_i$ for an appropriate value $r_i$.  This local time is
split with a uniform random variable into a proportion $v_i$ which
goes to the left of the origin and a proportion $(1 - v_i)$ which goes
to the right of the origin.  See Figure~\ref{fig:Hinfty} for an idea
of what $H_{\infty}$ looks like (note that the times $t_i$ should be
dense on the vertical axis).  Note that we may always choose a
continuous version of $H_{\infty}$.  Roughly, this is because the
processes $\eta^{+}$ and $\eta^{-}$ almost surely have no intervals where
they are constant.  This implies that the one-sided suprema of
$H_{\infty}$ are continuous.  Finally, the excursions that we glue
beneath these suprema can be assumed to be continuous.

The following theorem says that if we flip this picture over we obtain
an excursion of the height process which is conditioned on its maximum
height.  The proof follows easily from Lemma 3.1 and Theorem 3.2 of Abraham
and Delmas~\cite{Abraham/Delmas}.

\begin{theorem} [Abraham and Delmas \cite{Abraham/Delmas} (Stable
  case, $1<\beta<2$)]
\label{thm:CVHcenter} 
For all $m>0$,
\begin{align*}
& \left(m-H_{\infty}(x-\eta^{-}(m)),0 \leq x \leq
\eta^-(m)+\eta^+(m) \right) \\
& \qquad \equidist \left(H(x),0
\leq x \leq \sigma \right) \text{ under } \mathbb N(\cdot |
H_{\max}=m).
\end{align*}
\end{theorem}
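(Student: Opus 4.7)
The plan is to deduce Theorem~\ref{thm:CVHcenter} from the Williams-type decomposition of Abraham and Delmas (Theorem 3.2 of \cite{Abraham/Delmas}), which is phrased at the level of the exploration process rather than directly at the level of $H$. Their result describes the law of the tree coded by $H$ under $\N(\cdot\mid H_{\max}=m)$ by decomposition along the unique spine joining the root to the point $x_{\max}$ at which the maximum is attained: the subtrees hanging off the spine form, conditionally on the spine, a Poisson point process with an explicit intensity, equipped with an independent left--right marking. The task is therefore to recast their exploration-process statement into height-process language and to verify that the atoms and subtree excursions produced by our construction of $H_{\infty}$ have the correct joint law.

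I would parametrize the spine by coheight $s = m - H(y) \in (0,m)$ from the top. At coheight $s$, a subtree must be conditioned to have maximum height at most $s$ so as not to exceed the overall maximum; by Poisson thinning applied to the excursions comprising the subtree, this conditioning contributes a multiplicative factor $\exp(-r\,\N(H_{\max}>s)) = \exp(-rc_\beta s^{1/(1-\beta)})$ by Proposition~\ref{prop:tails}. Pooling the left and right subtrees at each spine point into a single cluster of total local time $r$ carrying an independent uniform variable $v \in [0,1]$ for the left--right split, and reading off the remaining intensity from Abraham and Delmas, one recovers exactly
\[
\frac{\beta(\beta-1)}{\Gamma(2-\beta)}\exp\!\bigl(-rc_\beta s^{1/(1-\beta)}\bigr)\,r^{-\beta}\,\mathrm{d}v\,\mathrm{d}r\,\mathrm{d}s,
\]
with $s$ playing the role of the variable $t$ in the definition of $\rho$. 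Moreover, by \cite{Abraham/Delmas} the left (resp.\ right) subtree of total local time $v_i r_i$ (resp.\ $(1-v_i)r_i$) is an independent concatenation, along a local-time interval of that length, of excursions of $H$ conditioned on $H_{\max}\leq s$; these are precisely the processes $H_{-}^{(i)}$ and $H_{+}^{(i)}$ defined via (\ref{eqn:Hdefinition}).

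It then remains to check that gluing these subtrees via the subordinators $\eta^{+}$ and $\eta^{-}$ reconstructs the full excursion: $\eta^{-}(m)$ plays the role of $x_{\max}$, $\eta^{+}(m)$ that of $\sigma - x_{\max}$, and the flip $y\mapsto m-H_{\infty}(y-\eta^{-}(m))$ sends coheight $s$ to height $m-s$, yielding an excursion of $H$ whose law matches $\N(\cdot\mid H_{\max}=m)$. The almost sure finiteness of $\eta^{+}(u)$ and $\eta^{-}(u)$ is then automatic, since the resulting object is an honest finite excursion. The main obstacle is the translation step: Abraham and Delmas work with the measure-valued exploration process, and matching their atomic intensity with ours requires careful bookkeeping, particularly in checking that the exponent $-\beta$ of $r$ (rather than the $-\beta-1$ of the bare L\'evy measure of $X$) is correct --- an increase that reflects the Palm-type size-biasing produced by conditioning on the presence of the maximal point $x_{\max}$ at the end of the spine.
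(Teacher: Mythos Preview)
Your approach is the same as the paper's: the paper does not give a proof at all, but simply states that ``the proof follows easily from Lemma 3.1 and Theorem 3.2 of Abraham and Delmas~\cite{Abraham/Delmas}.'' Your sketch fleshes out exactly this translation step --- from the exploration-process decomposition to the height-process formulation --- and the intensity computation you outline recovers the intensity of $\rho$ correctly. One small point: the paper invokes Lemma~3.1 of \cite{Abraham/Delmas} in addition to Theorem~3.2, so when carrying out the details you should check what that lemma contributes (likely the identification of the spine decomposition or the disintegration with respect to $H_{\max}$) and cite it as well.
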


We note that, in particular, \[\eta(m):=\eta^-(m)+\eta^+(m)\] has the
distribution of $\sigma$ under $\N ( \cdot | H_{\max}=m)$ and that
$\eta^{-}(m)$ has the distribution of $x_{\max}$ under the same measure.

Theorem 3.2 of Abraham and Delmas~\cite{Abraham/Delmas} also entails a
Brownian counterpart of this result.  Much of the complication in the
description of $H_{\infty}$ for $1 < \beta < 2$ came from the fact
that the stable height process has repeated local minima.  Here the
construction of $H_{\infty}$ is simpler since local minima are unique
in the Brownian excursion.  Let $\sum_{i \geq 1} \delta_{(t_i,h_i)}$
and $\sum_{i \geq 1} \delta_{(u_i,f_i)}$ be two independent Poisson
point measures on $\R_{+} \times \mathcal{E}$, both with intensity
$\mathrm d t \mathbb N(\cdot, H_{\max}\leq t )$.  For $s \geq 0$, set
\[ 
\eta^{+}(s):= \sum_{i: t_i \leq s}\sigma(h_i), 
\quad \eta^{-}(s):=\sum_{i: u_i \leq s}\sigma(f_i).
\]
Finally, set
\begin{align*}
H_{\infty}(x):= & \left( \sum_{i \geq 1} \left[t_i-h_i(x-\eta^{+}(t_i-))\right]
\I{\eta^+(t_i-) < x \leq \eta^+(t_i)} \right) \I{x \geq 0} \\
& + \left( \sum_{i \geq 1} \left[u_i - f_i(-x - \eta^{-}(u_i-))\right]
    \I{\eta^-(u_i-) < -x \leq \eta^-(u_i)} \right) \I{x < 0}.
\end{align*}

\begin{theorem} [Abraham and Delmas~\cite{Abraham/Delmas},
  Williams~\cite{Rogers/Williams2} (Brownian case, $\beta=2$)]
\label{thm:CVHcenterBr}
For all $m>0$,
\begin{align*}
& \left(m-H_{\infty}(x-\eta^{-}(m)),0 \leq x \leq
\eta^-(m)+\eta^+(m) \right)\\
& \qquad \equidist \left(H(x),0
\leq x \leq \sigma \right) \text{ under } \mathbb N(\cdot |
H_{\max}=m).
\end{align*}
\end{theorem}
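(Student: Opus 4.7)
The plan is to reduce this to the classical Williams decomposition of the Brownian excursion at its maximum. In the Brownian case the height process $H$ is, up to a constant factor, a reflected Brownian motion and is already Markov, so the detour through the exploration process used in the stable case is unnecessary; moreover the local minima of a Brownian excursion are almost surely distinct, which is why the uniform $v_i$-splitting of local time disappears and we are left with two fully independent Poisson point measures, one on each side of $0$. Alternatively, the statement is the $\beta=2$ specialization of Theorem 3.2 of Abraham and Delmas \cite{Abraham/Delmas}.

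The main steps are as follows. First, by independence of the two Poisson point measures $\sum_{i \geq 1} \delta_{(t_i,h_i)}$ and $\sum_{i \geq 1} \delta_{(u_i,f_i)}$, the processes $(H_{\infty}(x), x \geq 0)$ and $(H_{\infty}(-x), x \geq 0)$ are independent, mirroring the classical independence of the pre- and post-maximum portions of a Brownian excursion conditioned on $H_{\max}=m$. Second, by Williams' decomposition (see e.g.\ \cite{Rogers/Williams2}, Section VI.55), under $\N(\cdot|H_{\max}=m)$ the maximum is attained at a unique point $x_{\max}$, and the pre-maximum piece $(H(t), 0 \leq t \leq x_{\max})$ and the time-reversed post-maximum piece $(H(\sigma-t), 0 \leq t \leq \sigma-x_{\max})$ are two independent copies of a $3$-dimensional Bessel process started at $0$ and killed at first passage to $m$. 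Third, one checks that $(H_{\infty}(x), 0 \leq x \leq \eta^+(m))$ has this same law by matching excursion structures: $H_{\infty}$ on this interval starts at $0$, ends at $m$, stays in $[0,m]$, and from the construction its excursions strictly below the running supremum, indexed by the supremum level $t \in [0,m]$, form a Poisson point process of intensity $\N(\cdot, H_{\max} \leq t)\, dt$. It\^o's excursion theory gives the same excursion description for Bes$(3)$ from $0$ killed at first passage to $m$, so the two processes agree in law. A symmetric argument on the left side, together with Williams' time-reversal theorem to reconcile the pre- and post-maximum descriptions, closes the proof.

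The main technical point is the verification in the third step that the excursions of Bes$(3)$ from $0$ below its running supremum, indexed by the supremum level, form a Poisson point process with intensity $\N(\cdot, H_{\max} \leq t)\, dt$. This can be obtained from standard excursion-theoretic descriptions of Bes$(3)$ at its running extrema, or via Pitman's $2\bar{B}-B$ representation together with It\^o's description of the excursions of a Brownian motion below its running supremum.
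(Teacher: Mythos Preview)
The paper does not actually prove this theorem: it is stated as a direct consequence of Theorem~3.2 of Abraham and Delmas~\cite{Abraham/Delmas} (together with the classical Williams decomposition in~\cite{Rogers/Williams2}), with no further argument beyond the remark that the Brownian construction is simpler because local minima are unique. Your proposal goes considerably further than the paper does, by sketching how one would derive the identity directly from Williams' decomposition and It\^o excursion theory for Bes$(3)$ below its running supremum.

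Your argument is correct in outline and is the natural direct route in the Brownian case. The identification of $(H_\infty(x),\,0\le x\le \eta^+(m))$ with a Bes$(3)$ from $0$ run to first passage at $m$, via the Poisson description of excursions below the running supremum with intensity $\N(\cdot,H_{\max}\le t)\,\mathrm{d}t$, is exactly the content of the It\^o/Williams excursion picture (or, equivalently, Pitman's $2\bar B - B$ theorem combined with It\^o's description of Brownian excursions below the supremum). The independence of the two sides and the final gluing via Williams' decomposition of the excursion conditioned on $H_{\max}=m$ are standard. So your proposal is sound; it simply supplies the proof that the paper chose to outsource to~\cite{Abraham/Delmas} and~\cite{Rogers/Williams2}.
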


In the sequel, we will need various properties of the processes
$(H_{\infty}(x), x \in \R)$ and $(\eta(m), m \geq 0)$.  
We start with some properties of $H_{\infty}$. 

\begin{lemma} \label{lem:propofH_infty}
  For all $\beta \in (1,2]$, the process $H_{\infty}$ is self-similar:
  for all $m \in \R$,
\[
(m^{\alpha}H_{\infty}(mx), x \in \R) \equidist
  (H_{\infty}(x), x \in \R).
\]
Moreover, with probability 1, $H_{\infty}(x) \rightarrow +\infty$ as
$x \rightarrow +\infty$ or $x \rightarrow -\infty$.
\end{lemma}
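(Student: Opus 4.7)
The plan is in two parts, corresponding to the two assertions of the lemma. First I would establish the self-similarity by combining Theorem~\ref{thm:CVHcenter} with the scaling identity from Lemma~\ref{lem:scaling}; then I would prove the almost-sure divergence by controlling the Poisson construction of $H_\infty$ explicitly.

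For self-similarity, fix $\lambda>0$ and set $\tilde H_\infty(x):=\lambda^\alpha H_\infty(\lambda x)$. From the definition of $\eta^\pm$ one sees immediately that $\tilde\eta^\pm(m)=\lambda^{-1}\eta^\pm(\lambda^{-\alpha}m)$ for every $m>0$. Applying Theorem~\ref{thm:CVHcenter} at level $\lambda^{-\alpha}m$, multiplying through by $\lambda^\alpha$, and changing variables $y=\lambda x$ yields
\[
\bigl(m-\tilde H_\infty(x-\tilde\eta^-(m)),\,0\leq x\leq\tilde\eta(m)\bigr)\equidist\bigl(\lambda^\alpha H(\lambda x),\,0\leq x\leq\sigma/\lambda\bigr)\quad\text{under }\N(\cdot\,|\,H_{\max}=\lambda^{-\alpha}m).
\]
By Lemma~\ref{lem:scaling} applied with $x=1/\lambda$ and $u=\lambda^{-\alpha}m$ (so that $x^{-\alpha}u=m$), the right-hand side has the same law as $(H,\sigma)$ under $\N(\cdot\,|\,H_{\max}=m)$, which in turn is the law of $(m-H_\infty(x-\eta^-(m)),\,0\leq x\leq\eta(m))$ by another application of Theorem~\ref{thm:CVHcenter}. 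Thus $\tilde H_\infty$ and $H_\infty$ share the same Williams decomposition at every level $m$. Since $\eta^\pm(m),\tilde\eta^\pm(m)\to+\infty$ a.s.\ as $m\to\infty$ (by continuity of the one-sided suprema), this equality transfers to equality in law of the restrictions to every compact $[-K,K]$, and hence $\tilde H_\infty\equidist H_\infty$ as random continuous functions on $\R$.

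For the divergence, fix $\varepsilon,\delta>0$. On the interval $(\eta^+(t_i-),\eta^+(t_i)]$ attached to an atom $t_i>\varepsilon$, the event $\{H_\infty(x)<\varepsilon\}$ occurs precisely on the sub-excursions of $H_+^{(i)}$ that exceed level $t_i-\varepsilon$. Conditionally on $\rho$, these ``tall'' sub-excursions form a Poisson point process with intensity $\mathrm{d}u\,\N(H_{\max}\in(t_i-\varepsilon,t_i])$ run for local time $(1-v_i)r_i$. Integrating against the intensity of $\rho$, using Proposition~\ref{prop:tails} for $\N(H_{\max}>s)$ and the explicit gamma integral $\int_0^\infty r^{1-\beta}e^{-rc_\beta t^{1/(1-\beta)}}\,\mathrm{d}r=\Gamma(2-\beta)(c_\beta t^{1/(1-\beta)})^{\beta-2}$, one finds that the expected total number of tall sub-excursions coming from atoms with $t_i\geq\varepsilon+\delta$ is a constant multiple of
\[
\int_{\varepsilon+\delta}^{\infty}\bigl[(t-\varepsilon)^{1/(1-\beta)}-t^{1/(1-\beta)}\bigr]\,t^{(2-\beta)/(\beta-1)}\,\mathrm{d}t,
\]
which is finite because the integrand is bounded near $t=\varepsilon+\delta$ and decays like $\varepsilon\,t^{-2}$ as $t\to\infty$. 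Hence a.s.\ only finitely many tall sub-excursions arise from atoms with $t_i\geq\varepsilon+\delta$; each has a.s.\ finite length, so their union is bounded. Combined with the boundedness of $[0,\eta^+(\varepsilon+\delta)]$ this shows $\{x>0:H_\infty(x)<\varepsilon\}$ is a.s.\ bounded, and consequently $\liminf_{x\to+\infty}H_\infty(x)\geq\varepsilon$ a.s. Taking a countable union over $\varepsilon\in\N$ gives $H_\infty(x)\to+\infty$ a.s.; the case $x\to-\infty$ is handled symmetrically.

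The main obstacle is the integrability analysis underlying the divergence: the buffer $\delta>0$ is essential for taming the singularity at $t=\varepsilon$, and a short computation is needed to see that the exponents $1/(1-\beta)-1$ and $(2-\beta)/(\beta-1)$ combine precisely to $-2$ independently of $\beta\in(1,2]$. In the Brownian case one can avoid this computation altogether by appealing to Theorem~\ref{thm:CVHcenterBr} and the transience of $\mathrm{Bes}(3)$.
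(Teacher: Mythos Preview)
Your proposal is correct and follows essentially the same route as the paper. For self-similarity you combine Theorem~\ref{thm:CVHcenter} with Lemma~\ref{lem:scaling}, exactly as the paper indicates; for the divergence you use the Poisson construction of $H_\infty$ and a first-moment/Borel--Cantelli bound on ``tall'' sub-excursions attached to atoms $t_i$ above a fixed level with a buffer, arriving at the same integral the paper computes (your version makes the $t^{-2}$ decay explicit via the gamma integral, where the paper simply asserts finiteness).
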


\begin{proof}
  The self-similarity property is an easy consequence of the identity
  in law stated in Theorems \ref{thm:CVHcenter} and
  \ref{thm:CVHcenterBr} and of the scaling property of $H$ conditioned
  on its maximum (Lemma \ref{lem:scaling}).

  Now, we will show that for each $A>0$, a.s. $H_{\infty}(t)>A$ for
  $t$ sufficiently large.  This will imply that $\liminf_{x\rightarrow
    +\infty} H_{\infty}(x)$ is almost surely larger than $A$ for all
  $A>0$, and hence is infinite. So consider $A>0$ and recall the
  construction of $H_{\infty}$ from Poisson point measures in the
  stable cases $1<\beta<2$ (the proof can be done in a similar way in
  the Brownian case). By construction of $H_{\infty}$, we will have
  that $H_{\infty}(t)>A$ for $t$ sufficiently large if and only if,
  conditional on the Poisson point measure $\sum_{i \geq 1}
  (1-v_i)r_i\delta_{t_i}$, the number of $i$ such that $t_i>A+1$
  and \[A_i:=\max_{x\in [0,\tau_+^{(i)}((1-v_i)r_i)]}H^{(i)}_+(x) \geq
  t_i-A\] is almost surely finite. By the Borel-Cantelli lemma, it is
  therefore sufficient to check that the sum $\sum_{t_i >
    A+1}\Prob{A_i \geq t_i-A | t_i,(1-v_i)r_i}$ is almost surely
  finite. By standard rules of Poisson measures,
\begin{align*}
\Prob{A_i \geq t_i-A|t_i,(1-v_i)r_i} 
& = 1-\exp{(-(1-v_i)r_i\mathbb N\left[t_i-A\leq H_{\max}\leq t_i\right])}\\
& = 1-\exp{(-(1-v_i)r_ic_{\beta}\mathbb ((t_i-A)^{1/(1-\beta)}-t_i^{1/(1-\beta)}))}\\
& \leq  (1-v_i)r_ic_{\beta}\mathbb
((t_i-A)^{1/(1-\beta)}-t_i^{1/(1-\beta)}).
\end{align*}
Finally,
\begin{align*}
& \E{\sum_{t_i>A+1}(1-v_i)r_i ((t_i-A)^{1/(1-\beta)}-t_i^{1/(1-\beta)})} \\
&
=\int_0^1 \int_0^{\infty} \int_{A+1}^{\infty} (1-v) 
r((t-A)^{1/(1-\beta)}-t^{1/(1-\beta)})
\tfrac{\beta(\beta-1)}{\Gamma(2-\beta)}
\exp{(-c_{\beta}rt^{1/(1-\beta)})}r^{-\beta}\mathrm d t\mathrm d r
\mathrm dv,
\end{align*}
which is clearly finite.  This gives the desired result. The proof is
identical for the behavior as $x \rightarrow -\infty$.
\end{proof}

From their
construction from Poisson point measures, it is immediate that the
processes $\eta^+$ and $\eta^-$ both have independent (but not
stationary) increments. The following lemma is an obvious corollary of the self-similarity of $H_{\infty}$.

\begin{lemma} \label{lem:egalloieta} Let $1<\beta \leq 2$. Then
for all $x\geq0$ and $m>0$,
\[
\left(m^{1/\alpha}\left(\eta^+(u+x)-\eta^+(u)\right) , u \geq 0 \right)\equidist 
\left(\eta^+\left(\frac{u+x}{m}\right)-\eta^+\left(\frac{u}{m}\right),
  u \geq 0\right).
\]
In particular, for any $a > 0$,
\[
m^{1/\alpha}\left(\eta^+(ma+u)-\eta^+(u) \right) \convdist \eta^+(a)
\]
as $m \to \infty$. The same holds by replacing the process $\eta^+$ by $\eta^-$ and then by $\eta$.
\end{lemma}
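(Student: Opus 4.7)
The plan is to convert the self-similarity of $H_{\infty}$ established in Lemma~\ref{lem:propofH_infty} into a scaling identity for the process $\eta^+$, and then read off both parts of the lemma by taking finite-dimensional increments and a limit. To start, I would record that, from the definition $\eta^+(u) = \inf\{x > 0 : H_\infty(x) > u\}$, the continuity of $H_{\infty}$, the growth $H_{\infty}(x) \to +\infty$ as $x \to \pm\infty$ (Lemma~\ref{lem:propofH_infty}), and the density in $(0,\infty)$ of the levels $t_i$ in the Poisson construction, the function $\eta^+$ is non-decreasing and right-continuous in $u$ with $\eta^+(0) = 0$.

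The key computation is as follows: setting $\tilde H(x) := m^{\alpha} H_{\infty}(mx)$, the change of variables $y = mx$ in $\{x > 0 : \tilde H(x) > u\} = \{x > 0 : H_{\infty}(mx) > u m^{-\alpha}\}$ yields $\tilde \eta^+(u) = m^{-1}\eta^+(u m^{-\alpha})$. Since $\tilde H \equidist H_{\infty}$ as processes, I have $\tilde \eta^+ \equidist \eta^+$, whence
\[
\left(m^{-1} \eta^+(u m^{-\alpha}),\; u \geq 0\right) \equidist \left(\eta^+(u),\; u \geq 0\right)
\]
for every $m > 0$. Substituting $m$ by $m^{-1/\alpha}$ (which is positive since $\alpha < 0$) and then applying the deterministic reparametrisation $u \mapsto u/m$ to both sides converts this into the more convenient form
\[
\left(m^{1/\alpha} \eta^+(u),\; u \geq 0\right) \equidist \left(\eta^+(u/m),\; u \geq 0\right).
\]

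The first claim of the lemma follows immediately by applying the measurable (deterministic) increment functional $f \mapsto \bigl(f(u+x) - f(u)\bigr)_{u \geq 0}$ to both sides. For the ``in particular'' assertion I would specialise $x = ma$, obtaining
\[
m^{1/\alpha}\bigl(\eta^+(u + ma) - \eta^+(u)\bigr) \equidist \eta^+(a + u/m) - \eta^+(u/m),
\]
and let $m \to \infty$ with $u$ fixed; right-continuity of $\eta^+$ combined with $\eta^+(0) = 0$ gives the almost sure limit $\eta^+(a)$ on the right, and hence convergence in distribution on the left. The statements for $\eta^-$ follow by the same argument (or by the distributional symmetry of $H_{\infty}$), and the statement for $\eta = \eta^+ + \eta^-$ is then a consequence of the analogous joint scaling identity for the pair $(\eta^+,\eta^-)$, obtained in exactly the same way from self-similarity applied to the whole process $H_{\infty}$.

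There is essentially no substantive obstacle: the lemma really is a formal corollary of self-similarity. The only minor technical points needing a line of justification are the right-continuity of $\eta^+$ and the identity $\eta^+(0) = 0$, both of which are read off directly from the continuity of $H_{\infty}$ and the density of the levels $t_i$ in the Poisson construction of Section~\ref{sec:Williams}.
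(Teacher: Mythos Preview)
Your proposal is correct and follows precisely the route the paper indicates: the paper does not give a detailed proof but simply declares the lemma to be ``an obvious corollary of the self-similarity of $H_{\infty}$'' (Lemma~\ref{lem:propofH_infty}), and you have written out exactly those details. The scaling computation for $\eta^+$, the passage to increments, and the use of right-continuity with $\eta^+(0)=0$ for the limit statement are all correct and are what the paper's one-line justification is implicitly relying on.
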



\section{Convergence of height processes}
\label{sec:convheightprocess}

Let $\mathcal{E}^*$ be the set of excursions $f: \R \to \R^+$ such
that $f$ has a unique maximum.  We write $f_{\max} = \max_{x \in \R}
f(x)$ and $x_{\max} = \argmax_{x \in \R} f(x)$.  Let $\phi:
\mathcal{E}^* \to C(\R, \R^+)$ be defined by
\[
\phi(f)(t) = f_{\max} - f(x_{\max} + t).
\]
Let $(H(x), 0 \leq x \leq \sigma)$ be an excursion with a unique
maximum.  Extend this to a function in $\mathcal{E}^*$ by setting $H$
to be zero outside the interval $[0,\sigma]$.  Now put
\[
\bar{H} = \phi(H).
\]
The aim of this section is to prove Theorem \ref{thm:stablefragfonc},
which, using the scaling property of the stable height process, can be
re-stated as

\begin{theorem} \label{thm:stablefragfonc2}
Let $H_x$ have the distribution of $\bar{H}$ under $\N(\cdot | \sigma
= x)$.  Then
\[
H_x \convdist H_{\infty}
\]
as $x \to \infty$, in the sense of uniform convergence on compact intervals.
\end{theorem}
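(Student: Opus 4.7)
The strategy is to combine the Williams decomposition (Theorem~\ref{thm:CVHcenter}) with the disintegration formula of Lemma~\ref{lem:sigmaH_max} and then pass to the limit by dominated convergence. The first step would be to apply Lemma~\ref{lem:sigmaH_max} to the test function $\tilde f(H):=f(\phi(H))$ (for $f$ bounded continuous on $C(\mathbb R,\mathbb R^+)$ with the topology of uniform convergence on compacts), and to use the scaling identity of Lemma~\ref{lem:scaling} to shift the conditioning event from $\{\sigma>x,\ H_{\max}=x^{-\alpha}u\}$ to $\{\sigma>1,\ H_{\max}=u\}$. This yields
\begin{equation*}
\mathbb N\!\left[f(\bar H)\mid\sigma=x\right]
\;=\; c'\int_0^{\infty}\mathbb N\!\left[f\bigl(\bar H^{[x/\sigma]}\bigr)\,\I{\sigma>1}\;\Big|\;H_{\max}=u\right]u^{1/\alpha}\,\mathrm du,
\end{equation*}
with $c'=\Gamma(-\alpha)\bigl(-\alpha/(\alpha+1)\bigr)^{1/\alpha}$, the identity in which I would pass to the limit.

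For each fixed $u>0$, I would analyze the integrand using Theorem~\ref{thm:CVHcenter}, which provides a joint coupling with the universal process $H_{\infty}$ under $\mathbb N(\cdot\mid H_{\max}=u)$ such that $\bar H(t)=H_{\infty}(t)$ on $[-\eta^-(u),\eta^+(u)]$ and $\sigma=\eta(u)$. On any compact $[-K,K]$, since $\sigma/x\to 0$ while $\eta^\pm(u)>0$ almost surely, for $x$ large enough the event $\{K\sigma/x\le\min(\eta^+(u),\eta^-(u))\}$ has probability tending to $1$, and on it
\begin{equation*}
\bar H^{[x/\sigma]}(t)\;=\;(\sigma/x)^{\alpha}\,H_{\infty}\bigl((\sigma/x)\,t\bigr),\qquad t\in[-K,K].
\end{equation*}
The key convergence I have to establish is then, for each fixed $u$,
\begin{equation*}
\mathbb N\!\left[f\bigl(\bar H^{[x/\sigma]}\bigr)\,\I{\sigma>1}\mid H_{\max}=u\right]
\;\xrightarrow[x\to\infty]{}\;
\mathbb E[f(H_{\infty})]\cdot\mathbb P\bigl(\eta(1)>u^{1/\alpha}\bigr),
\end{equation*}
the probability factor arising from $\eta(u)\equidist u^{-1/\alpha}\eta(1)$ via Lemma~\ref{lem:egalloieta}.

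The main obstacle is precisely this last convergence. The self-similarity of $H_{\infty}$ (Lemma~\ref{lem:propofH_infty}) gives $r^{\alpha}H_{\infty}(r\,\cdot)\equidist H_{\infty}$ for every \emph{deterministic} $r>0$, but here the scaling factor $r=\sigma/x=\eta(u)/x$ is itself a functional of $H_{\infty}$, so self-similarity cannot be invoked directly. The resolution is that $r\to 0$, so the rescaled process only sees $H_{\infty}$ on a vanishing neighbourhood of the origin, whereas $\eta(u)$ is determined by the first passage to the macroscopic level~$u$; exploiting the explicit Poisson point measure construction of Section~\ref{sec:Williams} and the scaling invariance of its intensity under $(t,r_i)\mapsto(s^{-\alpha}t,s\,r_i)$, one should show that the Poisson points dictating the near-origin behaviour of $H_{\infty}$ become asymptotically independent of those controlling $\eta^\pm(u)$, yielding the required decoupling.

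Finally I would close by dominated convergence. An integrable envelope is supplied by $\I{\sigma>1}\,u^{1/\alpha}$: integrability at $+\infty$ follows from $1/\alpha<-1$, while near $0$ it follows from polynomial tail control of $\mathbb P(\eta(1)>u^{1/\alpha})$, itself a consequence of Proposition~\ref{prop:tails}. Taking the limit and applying Fubini,
\begin{equation*}
c'\int_0^{\infty}\mathbb P\bigl(\eta(1)>u^{1/\alpha}\bigr)\,u^{1/\alpha}\,\mathrm du
\;=\;\tfrac{-\alpha}{1+\alpha}\,c'\,\mathbb E\bigl[\eta(1)^{1+\alpha}\bigr]\;=\;1,
\end{equation*}
the last equality following from the second identity of Lemma~\ref{lem:sigmaH_max} applied to $g(y)=y^{(1+\alpha)/\alpha}$, together with an evaluation of $\mathbb N[H_{\max}^{-1/\alpha-1}\mid\sigma=1]$ via Proposition~\ref{prop:Miermont}. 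This gives $\mathbb N[f(\bar H)\mid\sigma=x]\to\mathbb E[f(H_{\infty})]$, which is the desired convergence in distribution.
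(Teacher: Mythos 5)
Your proposal follows essentially the same architecture as the paper's proof of Theorem~\ref{thm:stablefragfonc2}: disintegrate $\N(\cdot\mid\sigma=x)$ over the value of $H_{\max}$ via Lemma~\ref{lem:sigmaH_max}; rewrite the integrand as a functional of the pair $(H_\infty,\eta)$ via the Williams decomposition (Theorem~\ref{thm:CVHcenter}); prove convergence of the integrand for each fixed $u$; and close by dominated convergence. You carry out the disintegration with the cap level held at $u$ and the scaling factor $\sigma/x=\eta(u)/x\to0$ (a ``zoom-out'' parametrization), whereas the paper lets the cap level $x^{-\alpha}u\to\infty$ so that the scaling factor $x^{-1}\eta(x^{-\alpha}u)$ has a law independent of $x$; these are equivalent via Lemma~\ref{lem:scaling} and the choice is purely cosmetic.

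The genuine gap is the step you yourself flag as ``the main obstacle.'' You correctly observe that self-similarity of $H_\infty$ cannot be invoked directly, because the scaling factor is a measurable functional of $H_\infty$, and that an asymptotic decoupling statement is needed; but you leave that statement as ``one should show\ldots'', whereas in the paper it is precisely Lemma~\ref{lem:etaHindep}, the one piece of genuinely new work in the argument. Moreover, your heuristic for the decoupling is slightly misdirected: you say the Poisson points dictating the near-origin behaviour of $H_\infty$ become asymptotically independent of those controlling $\eta^\pm(u)$, but these two sets of points are not disjoint --- every near-origin point has $t_i\le u$ and so contributes to $\eta^\pm(u)$. The mechanism that actually works is the independent-increments property of $(\eta^+,\eta^-)$: for a fixed $\epsilon\in(0,u)$ the increment $\eta(u)-\eta(\epsilon)$ is independent of $H_\infty$ restricted to $[-\eta^-(\epsilon),\eta^+(\epsilon)]$ and of $\eta^\pm(\epsilon)$; one then replaces the scaling factor $\eta(u)$ by this increment, which does decouple, and controls the discarded piece using continuity of $\eta$ and its scaling identity (Lemma~\ref{lem:egalloieta}). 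This decomposition-and-error-control argument is exactly the content of Lemma~\ref{lem:etaHindep}, and without it the claimed factorisation $\N[\cdot\mid H_{\max}=u]\to\E{g(H_\infty)}\,\Prob{\eta(u)>1}$ is unsupported. The remaining ingredients you give --- the integrable envelope and the normalisation check --- are fine, though the paper bypasses the normalisation computation by noting it is forced once one sets $g\equiv1$.
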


Write $H^{(m)}_{\infty}$ for the function which is
$H_{\infty}$ capped at level $m$:
\[
H^{(m)}_{\infty}(x) 
= \begin{cases}
H_{\infty}(x) & \text{if $-\eta^{-}(m) \leq x \leq \eta^+(m)$} \\
m & \text{otherwise}.
\end{cases}
\]
Then we can re-state Theorem~\ref{thm:CVHcenter} as

\begin{theorem} \label{thm:CVHcenter2}
For all $m > 0$,
\[
H_{\infty}^{(m)} \equidist \bar{H} \text{ under $\N(\cdot |
H_{\max} = m)$}.
\]
\end{theorem}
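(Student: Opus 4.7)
The plan is to deduce this simply by rephrasing Theorem~\ref{thm:CVHcenter}: take the flipped, recentered version of the coupling it provides, and check that the extension of $H$ by zero outside $[0,\sigma]$ exactly reproduces the capping at level $m$ in the definition of $H_\infty^{(m)}$.

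First I would fix $m>0$ and invoke Theorem~\ref{thm:CVHcenter} to realise, on the same probability space, a height excursion $H$ distributed according to $\N(\cdot\mid H_{\max}=m)$, together with the pair of processes $\eta^{-}$, $\eta^{+}$ and the two-sided function $H_{\infty}$, in such a way that
\[
H(x)\;=\;m-H_{\infty}\!\bigl(x-\eta^{-}(m)\bigr),\qquad 0\le x\le \sigma=\eta^{-}(m)+\eta^{+}(m).
\]
The second step is to identify the location of the maximum of $H$ in this coupling. From the Poisson construction of $H_{\infty}$ and the descriptions of $\eta^{\pm}$ as the inverses of the one-sided running suprema, we have $H_{\infty}(0)=0$ and $H_{\infty}(x)>0$ for $x\ne 0$ in the interval $[-\eta^{-}(m),\eta^{+}(m)]$ (the set $\{H_{\infty}=t_i\}$ for a given supremum level $t_i$ lies in the range of a strictly increasing pure-jump subordinator and therefore cannot cover a neighbourhood of $0$). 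Hence $H$ attains its maximum $m$ uniquely, at the point $x_{\max}=\eta^{-}(m)$; in particular $H$ almost surely lies in the space $\mathcal{E}^{*}$ on which $\bar H=\phi(H)$ is defined.

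The third step is a direct computation. Using the coupling and the fact that $H$ is extended by $0$ outside $[0,\sigma]$, for every $t\in\R$,
\[
\bar H(t)\;=\;m-H\!\bigl(\eta^{-}(m)+t\bigr)
\;=\;
\begin{cases}
H_{\infty}(t), & -\eta^{-}(m)\le t\le \eta^{+}(m),\\
m, & \text{otherwise},
\end{cases}
\]
which is precisely $H_{\infty}^{(m)}(t)$. Thus $\bar H\equiv H_{\infty}^{(m)}$ pointwise under the coupling, and the announced equality in distribution
\[
H_{\infty}^{(m)}\;\equidist\;\bar H\text{ under }\N(\cdot\mid H_{\max}=m)
\]
follows at once. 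The only non-cosmetic point is the uniqueness of the maximum claim in Step~2; this is the part I would take the most care over, but it is a short consequence of the Poisson construction and the continuity of the one-sided suprema noted after the definition of $H_{\infty}$.
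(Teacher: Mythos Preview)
Your argument is correct and matches the paper's approach: the paper states Theorem~\ref{thm:CVHcenter2} simply as a restatement of Theorem~\ref{thm:CVHcenter}, without a separate proof, and your three steps are exactly the unpacking of that restatement (flip and recenter, identify $x_{\max}=\eta^{-}(m)$, and check the extension by $0$ becomes the capping at $m$). The care you take over uniqueness of the maximum is the only non-trivial point, and the paper has this implicitly via the Abraham--Delmas construction.
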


We will need the following technical lemma.

\begin{lemma} \label{lem:etaHindep}
For all $a > 0$,
\[
(m^{1/\alpha} \eta(ma), H_{\infty}^{(ma)}) \convdist (\tilde{\eta}(a), H_{\infty}),
\]
as $m \to \infty$, where $\tilde{\eta}(a)$ has the same distribution as $\eta(a)$ and is independent of $H_{\infty}$.  Here, the convergence is for the topology associated
with the metric
\[
d((a_1,f_1), (a_2,f_2)) = |a_1 - a_2| + \sum_{k \in \N} 2^{-k} \left(\sup_{t \in
  [-k,k]} |f_1(t) - f_2(t)| \wedge 1 \right)
\]
on $\R \times C(\R, \R^+)$.
\end{lemma}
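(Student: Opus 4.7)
The plan is to exploit the self-similarity of $H_\infty$ (Lemma~\ref{lem:propofH_infty}) together with the independent-increments structure of the processes $\eta^\pm$ that comes from their Poisson construction. Setting $L(x) := m^{-1}H_\infty(m^{-1/\alpha}x)$, an application of Lemma~\ref{lem:propofH_infty} with parameter $m^{-1/\alpha}$ gives $L \equidist H_\infty$ as processes, and a direct computation shows $\eta^{L,\pm}(a) = m^{1/\alpha}\eta^\pm(ma)$ and $L^{(a)}(x) = m^{-1}H^{(ma)}_\infty(m^{-1/\alpha}x)$. Consequently,
\[
(m^{1/\alpha}\eta(ma),\,H^{(ma)}_\infty) \equidist \bigl(\eta(a),\,mH^{(a)}_\infty(m^{1/\alpha}\cdot)\bigr).
\]
Since $m^{1/\alpha}K \to 0$ and $\eta^\pm(a)>0$ almost surely, on an event of probability tending to $1$ we have $mH^{(a)}_\infty(m^{1/\alpha}y) = mH_\infty(m^{1/\alpha}y)$ uniformly for $y\in[-K,K]$. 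It therefore suffices to show, for each fixed $K>0$, that
\[
\bigl(\eta(a),\,mH_\infty(m^{1/\alpha}\cdot)|_{[-K,K]}\bigr) \convdist \bigl(\tilde\eta(a),\,H_\infty|_{[-K,K]}\bigr)
\]
with $\tilde\eta(a)$ independent of $H_\infty|_{[-K,K]}$.

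To extract the independence I would fix $u_0\in(0,a)$ and split the Poisson measure as $\rho = \rho_{u_0} + \rho^{u_0}$, keeping the atoms with $t_i\le u_0$ and those with $t_i>u_0$ respectively; these are independent. Let $\mathcal{G}_{u_0} := \sigma(\rho_{u_0})$. From the construction of $H_\infty$, the quantity $A := \eta(u_0)$ and the restriction of $H_\infty$ to $[-\eta^-(u_0),\eta^+(u_0)]$ are $\mathcal{G}_{u_0}$-measurable, whereas $B := \eta(a) - \eta(u_0)$ depends only on $\rho^{u_0}$ and is hence independent of $\mathcal{G}_{u_0}$. On the event $E_{m,u_0} := \{m^{1/\alpha}K < \eta^\pm(u_0)\}$, whose probability tends to $1$ as $m\to\infty$ for fixed $u_0$, the random function $Y_m := mH_\infty(m^{1/\alpha}\cdot)|_{[-K,K]}$ is itself $\mathcal{G}_{u_0}$-measurable and is therefore independent of $B$.

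For bounded continuous $\phi,\psi$, conditioning on $\mathcal{G}_{u_0}$ yields
\[
\E{\phi(\eta(a))\psi(Y_m)\I{E_{m,u_0}}} = \E{h(A)\psi(Y_m)\I{E_{m,u_0}}},
\]
where $h(a') := \E{\phi(a'+B)}$ is bounded and continuous. Splitting $h(A) = h(0) + [h(A)-h(0)]$, the right-hand side equals $h(0)\E{\psi(Y_m)\I{E_{m,u_0}}}$ up to errors bounded by $\|\psi\|_\infty \E{|h(A)-h(0)|}$ and $\|\phi\|_\infty\|\psi\|_\infty \Prob{E_{m,u_0}^{\mathrm c}}$. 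Self-similarity of $H_\infty$ gives $\E{\psi(Y_m)} = \E{\psi(H_\infty|_{[-K,K]})}$ for every $m$. Letting $m\to\infty$ first, and then $u_0\to 0$, bounded convergence applied to $A = \eta(u_0)\to 0$ and $B \to \eta(a)$ a.s.\ yields $h(0) \to \E{\phi(\eta(a))}$ and $\E{|h(A)-h(0)|}\to 0$, producing the independent product $\E{\phi(\eta(a))}\E{\psi(H_\infty|_{[-K,K]})}$. Convergence in the full topology on $\R\times C(\R,\R^+)$ then follows by passing to a sequence $K\to\infty$. The main technical point is the careful handling of the two-parameter limit $(m,u_0)$ together with verifying from the Poisson construction that $Y_m$ is genuinely $\mathcal{G}_{u_0}$-measurable on $E_{m,u_0}$ (and not just on the whole space).
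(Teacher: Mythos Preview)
Your argument is correct and is a genuine variation on the paper's proof.  Both proofs exploit the same structural fact---that the restriction of $H_\infty$ to $[-\eta^-(u),\eta^+(u)]$ is independent of the increment $\eta(\cdot)-\eta(u)$---but you deploy it differently.  The paper works directly with $(m^{1/\alpha}\eta(ma),H_\infty^{(ma)})$: it fixes a level $u$, uses Lemma~\ref{lem:egalloieta} to show $m^{1/\alpha}(\eta(ma)-\eta(u))\convdist\eta(a)$, observes that this increment is independent of $H_\infty^{(ma)}|_{[-n,n]}$ on $\{\eta^\pm(u)>n\}$, and then sends $u\to\infty$ to kill the indicator.  You instead apply the global self-similarity of $H_\infty$ (Lemma~\ref{lem:propofH_infty}) at the outset to transfer the $m$-dependence from $\eta$ to $H_\infty$, obtaining the exact identity $(m^{1/\alpha}\eta(ma),H_\infty^{(ma)})\equidist(\eta(a),\,mH_\infty^{(a)}(m^{1/\alpha}\,\cdot\,))$.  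This freezes the first coordinate, and the remaining task reduces to showing that $\eta(a)$ becomes asymptotically independent of $mH_\infty(m^{1/\alpha}\,\cdot\,)|_{[-K,K]}$, which only probes $H_\infty$ on the shrinking window $[-m^{1/\alpha}K,m^{1/\alpha}K]$.  Your level parameter $u_0$ then goes to $0$ rather than $\infty$; the two limits are dual under the self-similarity rescaling.  What your route buys is that you never need the increment-scaling Lemma~\ref{lem:egalloieta}: the convergence $\eta(a)-\eta(u_0)\to\eta(a)$ is immediate from right-continuity of $\eta$ at $0$.  What the paper's route buys is slightly less bookkeeping with the rescaled function.

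One small imprecision worth tightening: as written, $\mathcal{G}_{u_0}=\sigma(\rho_{u_0})$ records only the atoms $(v_i,r_i,t_i)$ with $t_i\le u_0$, not the auxiliary Poisson processes $\{(u_j^{(i)},f_j^{(i)})\}$ used to build the excursions below each level $t_i$.  You need the enlarged $\sigma$-algebra generated by $\rho_{u_0}$ together with those auxiliary processes for $i$ with $t_i\le u_0$; this is still independent of $B=\eta(a)-\eta(u_0)$, and it is with respect to this enlargement that $Y_m\I{E_{m,u_0}}$ is measurable.  You flag exactly this point at the end, so the fix is routine.
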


\begin{proof}
  Let $f: \R \to \R$ be a bounded continuous function and let
  $g:C([-n,n], \R^+) \to \R$ be a bounded continuous function for some
  $n \in \N$.  To ease notation, whenever $h: \R \to \R^+$ we will
  write $g(h)$ for $g(h|_{[-n,n]})$.

It follows from Lemma \ref{lem:egalloieta} that $m^{1/\alpha} \eta(ma)
\equidist \eta(a)$ for all $m,a > 0$, but this is insufficient to give
the required asymptotic independence.  Since the processes
$\eta^+,\eta^-$ have independent increments, by construction we have
that $m^{1/\alpha}(\eta(ma) - \eta(u))$ is independent of
$(H_{\infty}(y), y \in [-\eta^-(u), \eta^+(u)])$ and $\eta^+(u),
\eta^-(u)$, whenever $ma > u$.  By Lemma~\ref{lem:egalloieta}, 
\[
m^{1/\alpha}(\eta(ma) - \eta(u)) \convdist \eta(a)
\]
as $m \to \infty$.  So when $ma > u$ we certainly have
\begin{align*}
& \E{f(m^{1/\alpha}(\eta(ma) - \eta(u)))
     g(H_{\infty}^{(ma)}) \I{\eta^-(u) > n, \eta^+(u) > n}} \\
& = \E{f(m^{1/\alpha}(\eta(ma) - \eta(u)))} 
    \E{g(H_{\infty}^{(ma)}) \I{\eta^-(u) > n, \eta^+(u) > n}} \\
& \to \E{f(\eta(a))}
    \E{g(H_{\infty}) \I{\eta^-(u) > n, \eta^+(u) > n}}.
\end{align*}
Without loss of generality, the functions $|f|$ and $|g|$ are bounded
by 1.  To ease notation, put $G_m = g(H_{\infty}^{(ma)}) \I{\eta^-(u) > n,
  \eta^+(u) > n}$.  We will first show that
\[
\left| \E{f(m^{1/\alpha}(\eta(ma) - \eta(u))) G_m} - \E{f(m^{1/\alpha}
  \eta(ma)) G_m} \right| \to 0
\]
as $m \to \infty$.  Clearly, this absolute value is smaller than 
\begin{equation*}
\E{\left| f(m^{1/\alpha}(\eta(ma) - \eta(u)))-f(m^{1/\alpha}
  \eta(ma)) \right| }  = \E{\left| f((\eta(a) - \eta(u/m)))-f(
  \eta(a)) \right| } ,   
\end{equation*}
by the self-similarity property of $\eta$ (Lemma \ref{lem:egalloieta}).
The function $f$ is bounded and continuous and so, by dominated
convergence, this last quantity converges to $0$.  So we obtain that
\[
\E{f(m^{1/\alpha}\eta(ma)) g(H_{\infty}^{(ma)})
\I{\eta^-(u) > n, \eta^+(u) > n}} 
\to \E{f(\eta(a))} \E{g(H_{\infty})
 \I{\eta^-(u) > n, \eta^+(u) > n}}.
\]
We must now remove the $\I{\eta^-(u) > n, \eta^+(u) > n}$.  Again
using self-similarity, we have that $\eta^-(u) \to \infty$ and $\eta^+(u) \to \infty$
almost surely as $u \to \infty$.  Take $\epsilon > 0$.  Then there
exists a $u$ such that
\[
\Prob{\eta^-(u) \leq n} < \frac{\epsilon}{2}, 
\quad \Prob{\eta^+(u) \leq n} < \frac{\epsilon}{2}.
\]
So for all $m > 0$,
\begin{align*}
& \left| \E{f(m^{1/\alpha}\eta(ma)) g(H_{\infty}^{(ma)}) 
\I{\eta^-(u) > n, \eta^+(u) > n}}  
- \E{ f(m^{1/\alpha}\eta(ma)) g(H_{\infty}^{(ma)})} \right| \\
& \leq \Prob{\eta^-(u) \leq n} + \Prob{\eta^+(u) \leq n} < \epsilon.
\end{align*}
It is now straightforward to conclude that
\begin{equation*}
\E{f(m^{1/\alpha} \eta(ma)) g(H_{\infty}^{(ma)})}
\to
\E{f(\eta(a))} \E{g(H_{\infty})}
\end{equation*}
as $m \to \infty$.
\end{proof}

\begin{proof}[Proof of Theorem~\ref{thm:stablefragfonc2}.]  Let $g:
C([-n,n],\R^+) \to \R^+$ be a bounded continuous function for some $n
\in \N$.  As before, if $h: \R \to \R^+$, we will write $g(h)$ for
$g(h|_{[-n,n]})$.  Then it will be sufficient for us to show that
\[
\E{g(H_x)} \to \E{g(H_{\infty})}
\]
as $x \to \infty$.

Without loss of generality, take $|g| \leq 1$.  Then
\begin{align*}
\E{g(H_x)} & = \N[g(\bar{H}) | \sigma = x] \\
& = \N[g(\phi(H)) | \sigma = x] \\
& = c \int_{\R^+} \N[g(\phi(H^{[x/\sigma]})) \I{\sigma \geq x} | H_{\max}
= x^{-\alpha} u] u^{1/\alpha} \mathrm{d}u,
\end{align*}
by Lemma~\ref{lem:sigmaH_max}. Suppose that we could show that
\begin{equation}
\N[g(\phi(H^{[x/\sigma]}))\I{\sigma \geq x} | H_{\max} = x^{-\alpha} u]
 \to \E{g(H_{\infty})} \mathbb N[\I{\sigma \geq 1} | H_{\max} = u]
 \label{eqn:conv}
\end{equation}
as $x \to \infty$ for all $u > 0$. Since $|g| \leq 1$,
\[
\N[g(\phi(H^{[x/\sigma]}))\I{\sigma \geq x} | H_{\max} = x^{-\alpha} u]
\leq \N[\I{\sigma \geq x} | H_{\max} = x^{-\alpha} u]
= \N[\I{\sigma \geq 1} | H_{\max} = u],
\]
by Lemma \ref{lem:scaling} and we also have that
\[
\int_{\R^+} \N[\I{\sigma \geq 1} | H_{\max} = u]
u^{1/\alpha} \mathrm{d} u = c \N[\sigma \geq 1] <
\infty. 
\]
So then by the dominated convergence theorem, we would be able to
conclude that
\[
\N[g(\phi(H)) | \sigma = x] \to \E{g(H_{\infty})}.
\]
It remains to prove (\ref{eqn:conv}).  By Theorem~\ref{thm:CVHcenter2}, 
\begin{align*}
& \N[g(\phi(H^{[x/\sigma]})) \I{\sigma \geq x} | H_{\max} = x^{-\alpha}
u] \\
& = \E{g((x^{-1} \eta(x^{-\alpha} u))^{\alpha} H_{\infty}^{(x^{-\alpha}
    u)}(x^{-1} \eta(x^{-\alpha} u)\ \cdot\ )) \I{\eta(x^{-\alpha} u)
    \geq x}}.
\end{align*}
Now, by Lemma~\ref{lem:etaHindep} we have that
\[
(x^{-1} \eta(x^{-\alpha} u), H_{\infty}^{(x^{-\alpha}
    u)}) \convdist (\tilde{\eta}(u), H_{\infty})
\]
as $x \to \infty$, where $\tilde{\eta}(u)$ and $H_{\infty}$ are
independent.  By the Skorokhod representation theorem, we may suppose that this convergence is almost sure.  But then, by the bounded convergence theorem, since $\Prob{\tilde{\eta}(u) = 1} = \N(\sigma = 1 | H_{\text{max}} = u) = 0$, we have
\begin{align*}
& \E{g((x^{-1} \eta(x^{-\alpha} u))^{\alpha} H_{\infty}^{(x^{-\alpha}
    u)}(x^{-1} \eta(x^{-\alpha} u)\ \cdot\ )) \I{\eta(x^{-\alpha} u)
    \geq x}} \\
& \to \E{g(\tilde{\eta}(u)^{\alpha} H_{\infty}(\tilde{\eta}(u)\ \cdot\
  )) \I{\tilde{\eta}(u) \geq 1}},
\end{align*}
as $x \to \infty$.  By the scaling property of $H_{\infty}$ (Lemma
\ref{lem:propofH_infty}) and the independence of $\tilde{\eta}(u)$ and
$H_{\infty}$, we see that this last is equal to
\[
\E{g(H_{\infty})} \Prob{\tilde{\eta}(u) \geq 1}.
\]
Since $\Prob{\tilde{\eta}(u) \geq 1} = \N[\I{\sigma \geq 1} | H_{\max}
  = u]$, the result follows.
\end{proof}


\section{Convergence of open sets and their sequences of ranked lengths} 
\label{sec:proof}

In this section, we prove Theorem \ref{thm:stablefrag} and Corollary
\ref{corostablerank}.

We will need the concept of a \emph{tagged fragment}, that is, the
size $(\lambda(t))_{t \geq 0}$ of a block of the fragmentation which
is tagged uniformly at random.  Then, according to \cite{BertoinSSF},
$(-\log(\lambda(t)), t \geq 0)$ is a time-change of a subordinator
$\xi$ with Laplace exponent
\begin{equation} \label{eqn:Laplacetagged}
\phi(t) = \int_{\Sfl} \left(1 - \sum_{i=1}^{\infty} s_i^{1+t}\right) \nu(\mathrm
d \mathbf s), \quad t \geq 0.
\end{equation}
More specifically, $\lambda(t) = \exp(-\xi(\rho(t)))$, where
\[
\rho(t) := \inf\left\{u \geq 0 : \int_0^u \exp(\alpha \xi(r)) \mathrm{d}r \geq
  t\right\}.
\]

\begin{lemma} 
\label{lemmaconditions}
(i) For all $a>0$,
\[
\mathrm{Leb}\{x \in \mathbb R : H_{\infty}(x)=a\}=0 \text{ a.s. }
\] 
(ii) For all $a>0$, almost surely, a is not a local maximum of $H_{\infty}$.
\end{lemma}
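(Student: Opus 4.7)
The plan is to combine Williams' decomposition (Theorem~\ref{thm:CVHcenter}) with two elementary facts: the existence of jointly continuous local times for the stable height process $H$, and the observation that the set of local maximum values of any continuous function on $\R$ is at most countable. Together with the self-similarity of $H_{\infty}$ (Lemma~\ref{lem:propofH_infty}), these ingredients yield both (i) and (ii).

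For (i), fix $a>0$. For each $m>a$, Theorem~\ref{thm:CVHcenter} gives
\[
\mathrm{Leb}\{x \in [-\eta^-(m),\eta^+(m)] : H_{\infty}(x)=a\}
\equidist
\mathrm{Leb}\{s \in [0,\sigma] : H(s)=m-a\}
\]
under $\N(\cdot\, |\, H_{\max}=m)$. Because $H$ admits jointly continuous local times $(L^y_t)_{y\geq 0,\, t\geq 0}$ (a cornerstone of the Duquesne--Le Gall theory), the occupation times formula forces $\int_0^\sigma \I{H(s)=m-a}\,ds=0$ $\N$-a.s., and hence also $\N(\cdot\, |\, H_{\max}=m)$-a.s. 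Letting $m$ range over positive integers and using $\eta^\pm(m)\to\infty$ a.s. as $m\to\infty$, monotone convergence then gives $\mathrm{Leb}\{x\in\R: H_{\infty}(x)=a\}=0$ almost surely.

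For (ii), the key observation is that for any continuous $f:\R\to\R$, the set $V(f)$ of its local maximum values is at most countable: each such value coincides with $\sup_{y\in(p,q)}f(y)$ for some rational interval $(p,q)$, and only countably many such intervals exist. Applied to the almost surely continuous function $H_{\infty}$, this yields $\mathrm{Leb}(V(H_{\infty}))=0$ a.s. By the self-similarity of $H_{\infty}$ (Lemma~\ref{lem:propofH_infty}), one has $V(H_{\infty}) \equidist m^{-\alpha} V(H_{\infty})$ for every $m>0$, so the function $a\mapsto \Prob{a\in V(H_{\infty})}$ is constant on $(0,\infty)$. A Fubini argument combined with the preceding measure-zero bound,
\[
\Prob{a\in V(H_{\infty})}
= \int_1^2 \Prob{a'\in V(H_{\infty})}\,da'
= \E{\mathrm{Leb}(V(H_{\infty})\cap[1,2])}
= 0,
\]
then completes the proof for every $a>0$.

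The only nontrivial external input is the existence of local times for $H$; beyond this, both arguments are soft and rely only on the scaling and decomposition structure already developed in the paper.
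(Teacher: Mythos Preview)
Your argument for (i) uses the same key ingredient as the paper (the occupation density formula for $H$), but you connect it to $H_\infty$ via Theorem~\ref{thm:CVHcenter} whereas the paper uses the explicit Poisson construction of $H_\infty$. Your route has a small gap: the passage from ``$\N$-a.e.'' to ``$\N(\cdot\mid H_{\max}=m)$-a.e.'' for a \emph{specific} $m$ is not automatic, since $\{H_{\max}=m\}$ is $\N$-null and disintegration only gives the conclusion for Lebesgue-a.e.\ $m$. You can repair this by noting that the property ``$\mathrm{Leb}\{s:H(s)=b\}=0$ for all $b$'' is scale-invariant and invoking Lemma~\ref{lem:scaling} to see that $\N(\text{property fails}\mid H_{\max}=m)$ does not depend on $m$, hence must vanish. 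The paper's Poisson approach sidesteps this entirely: the excursions $f_j^{(i)}$ in the construction of $H_\infty$ have law $\N(\cdot,H_{\max}\le t_i)$, which is absolutely continuous with respect to $\N$, so the $\N$-null set transfers directly.

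Your argument for (ii) is genuinely different and more elementary than the paper's. The paper invokes a specific result of Duquesne and Le~Gall (their Lemma~2.5.3) asserting that for each fixed $b>0$, $\N$-a.e.\ $b$ is not a local minimum of $H$, and then feeds this through the Poisson construction of $H_\infty$. Your route uses only the countability of local-maximum values of a continuous function, the self-similarity of $H_\infty$, and Fubini, with no external input beyond Lemma~\ref{lem:propofH_infty}. This is a nice simplification; indeed, the same self-similarity/Fubini trick could also yield (i) without local times at all (Fubini on the graph gives $\E{\mathrm{Leb}\{x\in[-K,K]:H_\infty(x)=a\}}=0$ for Lebesgue-a.e.\ $a$, and scaling upgrades this to every $a$).
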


\begin{proof} (i) By Proposition 1.3.3 of Duquesne and Le
  Gall~\cite{Duquesne/LeGall}, the height process $(H(x), x \geq 0)$
  possesses a collection of local times $(L_s^x, s \geq 0, x \geq 0)$,
  which we can assume is jointly measurable, and continuous and
  non-decreasing in $s$.  Moreover, for any non-negative measurable
  function $g: \R^+ \to \R^+$,
\[
\int_0^s g(H_r) \mathrm{d} r  = \int_{\R^+} g(x) L_s^x \mathrm{d} x
\quad \text{a.s.}
\]
Taking $g(x) = \I{x=a}$, we see that for any $s \geq 0$,
\[
\mathrm{Leb}\{t \in [0,s]: H(t) = a\} = 0 \quad \text{a.s.}
\]
Since the height process is built out of excursions, the same is true
for $H$ under the excursion measure $\N$.  In other words, for all $a
> 0$,
\begin{equation}
\label{leb0}
\N(\{\mathrm{Leb}\{x \in [0,\sigma]: H(x) = a\}\neq 0\}) = 0.
\end{equation}
Moreover, from the construction of the process $H_{\infty}$ in
Section~\ref{sec:Williams}, and using the same notation, we see
that
\[
\mathrm{Leb}\{x \in \R: H_{\infty}(x) = a \}
= \sum_{i \geq 1} \sum_{j: u_j^{(i)} \leq r_i} \mathrm{Leb}\left\{x
\in [0, \sigma(f_j^{(i)})]: f_j^{(i)}(x) = t_i - a\right\},
\]
where $\sum_{i \geq 1} \delta_{(r_i,t_i)}$ is a Poisson point measure
of intensity 
\[
\beta (\beta - 1) (\Gamma(2-\beta))^{-1}
\exp(-rc_{\beta} t^{1/(1-\beta)}) r^{-\beta} \mathrm{d}r \mathrm{d}t
\]
on $\R^+\setminus\{0\} \times \R^+\setminus \{0\}$ and, for each $i
\geq 1$, $\{(u^{(i)}_j, f^{(i)}_j), j \geq 1\}$ are the points of a
Poisson point measure of intensity $\mathrm{d}u \N(\cdot,H_{\max} \leq
t_i)$ on $\R^+ \times \mathcal{E}$. From this representation and
(\ref{leb0}), it is clear that the Lebesgue measure of $\{x \in \R:
H_{\infty}(x) = a \}$ is equal to 0 almost surely.

(ii) By Lemma 2.5.3 of Duquesne and Le Gall~\cite{Duquesne/LeGall}, for all $a > 0$,
\[
\N(\{\text{$a$ is a local minimum of $H$}\}) = 0.
\]
With notation as in part (i), we have
\[
\{ \text{$a$ is a local maximum of $H_{\infty}$} \} = \bigcup_{i \geq
  1} \left(\{t_i=a \} \bigcup_{j: u_j^{(i)} \leq r_i} \left\{\text{$t_i - a$ is a local
    minimum of $f_j^{(i)}$}\right\} \right)
\]
(recall that in the construction of $H_{\infty}$, the excursions
$f_j^{(i)}$ are glued upside down at levels $t_i$, so that local
minima of these excursions correspond to local maxima of
$H_{\infty}$). The conclusion is now obvious.
\end{proof}

The proof of Theorem \ref{thm:stablefrag} follows immediately from
Theorem \ref{thm:stablefragfonc2}, Proposition
\ref{prop:fnconvimpliessetconv} and Lemma \ref{lemmaconditions}.

It remains to prove Corollary \ref{corostablerank}. We will need the
following generalization of Theorem \ref{thm:stablefragfonc2} and the
forthcoming Lemma \ref{lemmarestriction}.

\begin{theorem} \label{thm:heightprocessconvdbis} 
  Let $H_x$ be as in Theorem \ref{thm:stablefragfonc2}, and denote by
  $H_x|_K$ its restriction to the compact $[-K,K]$ (and similarly for
  $H_{\infty})$. Then, for all $K>0$,
\[
(H_x|_K, \mathrm{Leb}\{ y \in [-K,K] : H_x(y)<1\}) \convdist
(H_{\infty}|_K, \mathrm{Leb}\{ y \in [-K,K] : H_{\infty}(y)<1\}), 
\]
as $x \to \infty$.  Here, the convergence is in the topology associated with the metric
\[
d((f_1, a_1), (f_2, a_2)) = \sup_{x \in [-K,K]} |f_1(x) - f_2(x)| + |a_1 - a_2|
\]
on $C([-K,K], \R^+) \times \R^+$.
\end{theorem}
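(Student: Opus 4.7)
The plan is to upgrade the marginal convergence $H_x \convdist H_\infty$ from Theorem~\ref{thm:stablefragfonc2} to the claimed joint convergence by using Skorokhod's representation, and then applying Lemma~\ref{lem:cvLeb} to handle the Lebesgue functional.

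First, I would invoke Theorem~\ref{thm:stablefragfonc2} to get $H_x \convdist H_\infty$ in $C(\R,\R^+)$ endowed with the topology of uniform convergence on compacts. Since this is a Polish space, by the Skorokhod representation theorem, one may construct, on a common probability space, versions $\widetilde H_x$ and $\widetilde H_\infty$ with the same respective laws such that $\widetilde H_x \to \widetilde H_\infty$ almost surely, uniformly on compacts. In particular, $\widetilde H_x|_K \to \widetilde H_\infty|_K$ almost surely in $C([-K,K],\R^+)$.

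Next, I would apply Lemma~\ref{lemmaconditions}(i) with $a=1$ to deduce that almost surely
\[
\mathrm{Leb}\{y \in \R : \widetilde H_\infty(y) = 1\} = 0.
\]
On this event, Lemma~\ref{lem:cvLeb} applies to the sequence $\widetilde H_x$ converging to $\widetilde H_\infty$ uniformly on $[-K,K]$, and yields
\[
\mathrm{Leb}\{y \in [-K,K] : \widetilde H_x(y) < 1\} \longrightarrow \mathrm{Leb}\{y \in [-K,K] : \widetilde H_\infty(y) < 1\}
\]
almost surely, as $x \to \infty$. Combining this with the uniform convergence of $\widetilde H_x|_K$ to $\widetilde H_\infty|_K$ gives almost sure convergence of the pair in the product metric $d$ defined in the statement.

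Finally, almost sure convergence implies convergence in distribution, and since the laws of $(\widetilde H_x|_K, \mathrm{Leb}\{y \in [-K,K]: \widetilde H_x(y) < 1\})$ coincide with those of $(H_x|_K, \mathrm{Leb}\{y \in [-K,K]: H_x(y) < 1\})$, the desired joint convergence in distribution follows. There is no serious obstacle here: the potential difficulty would have been the discontinuity of the functional $f \mapsto \mathrm{Leb}\{y \in [-K,K] : f(y) < 1\}$ at functions spending positive Lebesgue time at level $1$, but Lemma~\ref{lemmaconditions}(i) guarantees that the limit $H_\infty$ almost surely avoids precisely this bad set, so the continuous mapping theorem applies on a set of full measure.
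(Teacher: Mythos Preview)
Your proof is correct and follows essentially the same approach as the paper's own proof, which also invokes Skorokhod's representation theorem together with Lemma~\ref{lem:cvLeb} and Lemma~\ref{lemmaconditions}(i). Your version is simply a more detailed write-up of the same argument.
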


\begin{proof}  By the Skorokhod representation theorem, there exists a probability space such that the convergence in Theorem~\ref{thm:stablefragfonc2} is almost sure.  Then the result follows from Lemma \ref{lem:cvLeb} and Lemma \ref{lemmaconditions} (i).
\end{proof}

\begin{lemma} 
\label{lemmarestriction}
Given $\epsilon>0$, there exists $K>0$ such that 
\[
\Prob{\inf_{y \in (-\infty,K] \cup [K,+\infty)} H_x(y)<1} \leq \epsilon
\]
for all $x$ sufficiently large.
\end{lemma}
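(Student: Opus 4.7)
My plan is to combine the integral representation in Lemma~\ref{lem:sigmaH_max} with the Williams decomposition from Theorem~\ref{thm:CVHcenter}, reducing the problem to the almost sure boundedness of $\{y:H_{\infty}(y)<1\}$ provided by Lemma~\ref{lem:propofH_infty}. Starting from
\[
\Prob{\inf_{|y|>K}H_x(y)<1}=c\int_0^{\infty}\N\big[\I{\exists y,|y|>K,\phi(H^{[x/\sigma]})(y)<1}\I{\sigma\geq x}\,\big|\,H_{\max}=x^{-\alpha}u\big]\,u^{1/\alpha}\,\mathrm{d}u,
\]
the scaling identity $\phi(H^{[x/\sigma]})(y)=(x/\sigma)^{\alpha}\bar H(y\sigma/x)$ (as used in the proof of Theorem~\ref{thm:stablefragfonc2}), the change of variable $z=y\sigma/x$ and Theorem~\ref{thm:CVHcenter} together show that on $\{\sigma\geq x\}$ the integrand is bounded by $\Prob{\exists z,|z|>K,H_{\infty}^{(m)}(z)<1,\eta(m)\geq x}$ with $m:=x^{-\alpha}u$. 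I would then split this integral at $u=x^{\alpha}$ (i.e., $m=1$).

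For $u\leq x^{\alpha}$, one has $m\leq 1$ and $H_{\infty}^{(m)}\equiv m\leq 1$ outside $[-\eta^{-}(m),\eta^{+}(m)]$, so the first indicator is trivially satisfied and this part equals $c\int_0^{x^{\alpha}}\Prob{\eta(x^{-\alpha}u)\geq x}\,u^{1/\alpha}\,\mathrm{d}u$. Using the self-similarity $\eta(v)\equidist v^{-1/\alpha}\eta(1)$ coming from Lemma~\ref{lem:egalloieta} and the change of variable $w=xv^{1/\alpha}$, this rewrites as $-\alpha\int_x^{\infty}\Prob{\eta(1)\geq w}w^{\alpha}\,\mathrm{d}w$, which vanishes as $x\to\infty$ provided $\eta(1)$ has a finite moment of order greater than $1+\alpha$---a fact I would read off from Lemma~\ref{lem:sigmaH_max}, which relates positive moments of $\eta(1)$ to (negative) moments of $H_{\max}$ under $\N(\cdot\mid\sigma=1)$.

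For $u>x^{\alpha}$, one has $m>1$ and $H_{\infty}^{(m)}\equiv m>1$ outside $[-\eta^{-}(m),\eta^{+}(m)]$, so $\{z:H_{\infty}^{(m)}(z)<1\}\subset\{z:H_{\infty}(z)<1\}$; hence the integrand is bounded by $\Prob{A_K\cap\{\eta(m)\geq x\}}$ with $A_K:=\{\exists z,|z|>K,H_{\infty}(z)<1\}$. A Fubini rewrite turns this part into $c\E{\I{A_K}W(x)}$ where $W(x):=\int_{x^{\alpha}}^{\infty}\I{\eta(x^{-\alpha}u)\geq x}\,u^{1/\alpha}\,\mathrm{d}u$. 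Computing the integral explicitly and using the self-similarity of $\eta$, one finds that $W(x)$ equals in distribution $-(1/\alpha+1)^{-1}\eta(1)^{1+\alpha}$ on the high-probability event $\eta^{-1}(x)\geq 1$, so the family $\{W(x)\}$ is bounded in $L^{2}$ uniformly in $x$; Cauchy--Schwarz then gives $\E{\I{A_K}W(x)}\leq\sqrt{\Prob{A_K}\,\E{W(x)^{2}}}$, and $\Prob{A_K}\to 0$ as $K\to\infty$ by Lemma~\ref{lem:propofH_infty}, which completes the argument. The main obstacle is establishing the $L^{2}$-boundedness of $W(x)$, which via Lemma~\ref{lem:sigmaH_max} reduces to the finiteness of a specific negative moment of $H_{\max}$ under $\N(\cdot\mid\sigma=1)$.
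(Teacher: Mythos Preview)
There is a genuine gap, and it comes from a sign error in the scaling identity. From the paper's definition $H^{[m]}(x)=m^{-\alpha}H(x/m)$ one gets
\[
\phi(H^{[x/\sigma]})(y)=(x/\sigma)^{-\alpha}\,\bar H(y\sigma/x),
\]
not $(x/\sigma)^{\alpha}\bar H(y\sigma/x)$; this is exactly the factor $(x^{-1}\eta)^{\alpha}$ appearing in the proof of Theorem~\ref{thm:stablefragfonc2} and in equation~(\ref{imp}). Hence the condition $\phi(H^{[x/\sigma]})(y)<1$ becomes, after $z=y\sigma/x$,
\[
\bar H(z)<(x/\sigma)^{\alpha}.
\]
Since $\alpha<0$, on $\{\sigma\ge x\}$ we have $(x/\sigma)^{\alpha}\ge 1$, so the event $\{\bar H(z)<(x/\sigma)^{\alpha}\}$ \emph{contains} $\{\bar H(z)<1\}$. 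Your claimed bound ``integrand $\le\Prob{\exists z,\ |z|>K,\ H_\infty^{(m)}(z)<1,\ \eta(m)\ge x}$'' therefore goes the wrong way, and your treatment of the main region $u>x^{\alpha}$ (where you pass to $A_K=\{\exists z,|z|>K,H_\infty(z)<1\}$ and then apply Cauchy--Schwarz) is built on a false inequality.

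This random threshold $(\eta(m)/x)^{-\alpha}\ge 1$ is precisely the obstruction the paper's proof handles by introducing an auxiliary $L\ge 1$ and splitting on $\{x\le\eta(m)<Lx\}$ versus $\{\eta(m)\ge Lx\}$: on the first event the threshold is capped by $L^{-\alpha}$, reducing to $\{\inf_{y\ge K}H_\infty(y)<L^{-\alpha}\}$, while the second event contributes $c\int\Prob{\eta(u)\ge L}\,u^{1/\alpha}\mathrm{d}u$, made small by taking $L$ large. Your part for $u\le x^{\alpha}$, where you simply drop the first indicator and bound by $\Prob{\eta(m)\ge x}$, is fine and parallels the paper's first term; what is missing is a device to control the random threshold on the complementary region. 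As a minor secondary point, your description of $W(x)$ is also inaccurate: self-similarity gives $W(x)\equidist\int_{x^{\alpha}}^{\infty}\I{\eta(u)\ge 1}\,u^{1/\alpha}\mathrm{d}u$, which is a function of $\eta^{-1}(1)$ rather than $\eta(1)$, so the moment you need is not the one you cite.
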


\begin{proof}
  By symmetry, it is sufficient to show that there exists $K>0$
  such that \linebreak $\Prob{\inf_{y \in[K,+\infty)} H_x(y)<1} \leq
  \epsilon$ for all $x$ sufficiently large.  As in the proof of
  Theorem \ref{thm:stablefragfonc2}, we combine Lemma \ref{lem:sigmaH_max}
  and Theorem \ref{thm:CVHcenter} to get
\begin{align} \label{imp}
& \Prob{\inf_{y \in[K,+\infty)} H_x(y)<1}  \\
& \qquad = c\int_{\mathbb R^+} \Prob{\inf_{y \in[K,+\infty)}
  H_{\infty}^{(x^{-\alpha}u)}(x^{-1}\eta(x^{-\alpha}u)y)<x^{\alpha}
\eta^{-\alpha}(x^{-\alpha}u),\eta(x^{-\alpha}u) \geq x}
u^{1/\alpha}\mathrm d u. \nonumber
\end{align}
For all $L \geq 1$, the probability in the integral can be bounded
above by
\[
\Prob{\inf_{y \in[x^{-1}\eta(x^{-\alpha}u)K,+\infty)}
  H_{\infty}^{(x^{-\alpha}u)}(y)<x^{\alpha}\eta^{-\alpha}(x^{-\alpha}u),
  Lx > \eta(x^{-\alpha}u) \geq x} 
+ \Prob{\eta(x^{-\alpha}u) \geq Lx}.
\]
The first probability in this sum is in turn smaller than 
\[
\Prob{\inf_{y \in[K,+\infty)}
  H_{\infty}^{(x^{-\alpha}u)}(y)<L^{-\alpha}, Lx > \eta(x^{-\alpha}u)
  \geq x},
\]
and so is also smaller than
\begin{equation} \label{Proba1}
\Prob{\inf_{y \in[K,+\infty)}
  H_{\infty}^{(x^{-\alpha}u)}(y)<L^{-\alpha}}
 \wedge \Prob{Lx > \eta(x^{-\alpha}u) \geq x}.
\end{equation}
Recall that by self-similarity, 
\[
\Prob{Lx > \eta(x^{-\alpha}u) \geq x}
=\Prob{L > \eta(u) \geq 1} \leq \Prob{\eta(u) \geq 1}.
\]
Hence, (\ref{Proba1}) can be bounded from above by $\Prob{\eta(u)
\geq 1}$ when $x^{-\alpha}u \leq L^{-\alpha}$, and by
\[
\Prob{\inf_{y \in[K,+\infty)} H_{\infty}(y)<L^{-\alpha}} 
\wedge \Prob{\eta(u) \geq 1}
\]
when $x^{-\alpha}u > L^{-\alpha}$. From the identity (\ref{imp}) and
these observations, we have
\begin{align} 
\Prob{\inf_{y \in[K,+\infty)} H_x(y)<1} \leq & \
c \int_0^{L^{-\alpha}x^{\alpha}} \Prob{\eta(u) \geq 1}
u^{1/\alpha}\mathrm d u \nonumber \\
& + c\int_0^{\infty} \Prob{\inf_{y \in[K,+\infty)}
  H_{\infty}(y)<L^{-\alpha}} \wedge \Prob{\eta(u) \geq
1} u^{1/\alpha}\mathrm d u \label{imp2} \\ 
& + c\int_0^{\infty}  \Prob{\eta(u) \geq L}
u^{1/\alpha}\mathrm d u. \nonumber
\end{align}
Recall that $c\int_0^{\infty} \Prob{\eta(u) \geq L}
u^{1/\alpha}\mathrm d u=\mathbb N[\sigma \geq
L]<\infty$. Then fix $L$ large enough that the third integral in the
right-hand side of (\ref{imp2}) is smaller than $\epsilon/3$. This $L$
being fixed, note that the first integral on the right-hand side of
(\ref{imp2}) is smaller than $\epsilon/3$ for all $x$ sufficiently
large.  Since $H_{\infty}(y) \to \infty$ a.s.\ as $y \to +\infty,
-\infty$, by dominated convergence we have that the second integral
(which does not depend on $x$) is smaller than $\epsilon/3$ for $K$
sufficiently large.  Hence, there exists some $K>0$ such that
$\Prob{\inf_{y \in[K,+\infty)} H_x(y)<1} \leq \epsilon$ for all $x$
sufficiently large.
\end{proof}

\begin{proof}[Proof of Corollary \ref{corostablerank}.]  Let $\epsilon>0$
and consider some real number $K$ such that, for $x$ large enough, the
events $E_x=\{\inf_{y \in (-\infty,K] \cup [K,+\infty)} H_x(y) < 1
\}$ all have probability smaller than $\epsilon/5$ (such a $K$ exists
by Lemma \ref{lemmarestriction}). Taking $K$ larger if necessary, we
also have that $E_{\infty}$ (defined in a similar way using
$H_{\infty}$) has probability smaller than $\epsilon/5$.

Re-stated in terms of the functions $H_x$, our goal is to check that
the decreasing sequence of lengths of the interval components of
$O_x:=\{y\in \mathbb R: H_x(y)<1\}$, say $|O_x|^{\downarrow}$,
converges in distribution as $x \rightarrow \infty$ to the decreasing
sequence of lengths, $|O_{\infty}|^{\downarrow}$, of the interval
components of $O_{\infty}:=\{y\in \mathbb R : H_{\infty}(y)<1\}$. We
recall that the topology considered on $\Sflfin$ is given by the
distance $d_{\Sflfin}(\mathbf s,\mathbf
{s'})=\sum_{i=1}^{\infty}|s_i-s_i'|.$ Now, let $O_x^K$ be the
restriction of $O_x$ to the open set $(-K,K)$, for $x \in
(0,\infty]$.  Let $|O^K_x|^{\downarrow}$ be the corresponding ranked
sequence of lengths of interval components. By the Skorokhod
representation theorem, we may suppose that the convergence stated in
Theorem \ref{thm:heightprocessconvdbis} holds almost surely. From
this, Lemma \ref{lemmaconditions} and Proposition
\ref{prop:fnconvimpliessetconv}, we have that $O_x^K$ converges to
$O_{\infty}^K$, in the sense that their complements in $[-K,K]$ converge in the Hausdorff distance.
Moreover,
the total length of $O_x^K$ converges to that of $O_{\infty}^K$. We
deduce that $|O^K_x|^{\downarrow}$ converges to
$|O_{\infty}^K|^{\downarrow}$ in the \emph{pointwise} distance on
$\Sflfin$ (see Proposition 2.2 of Bertoin~\cite{BertoinBook}). But
since we also have convergence of the total length of the open sets,
the convergence actually holds in the $d_{\Sflfin}$ distance.

Now, let $f: \Sflfin \rightarrow \mathbb R$ be any continuous function
such that $\sup_{\mathbf s \in \Sflfin}|f(\mathbf s)| \leq 1$. Since
$O_x=O^K_x$ on $E_x^{\mathrm{c}}$, we have
\begin{align*}
\Bigg|\E{f(|O_x|^{\downarrow})}-\E{f(|O_{\infty}|^{\downarrow})} \Bigg|
& \leq \Bigg|\E{f(|O^K_x|^{\downarrow})
  \mathbbm{1}_{E_x^{\mathrm{c}}}}-\E{f(|O^K_{\infty}|^{\downarrow})
\mathbbm{1}_{E_{\infty}^{\mathrm{c}}}} \Bigg|
+ \Prob{E_x} + \Prob{E_{\infty}} \\
& \leq \Bigg|\E{f(|O^K_x|^{\downarrow})}-
\E{f(|O^K_{\infty}|^{\downarrow})} \Bigg| + 2\Prob{E_x} + 2
\Prob{E_{\infty}}.
\end{align*}
Using the fact that $|O^K_x|^{\downarrow}$ converges in distribution
to $|O^K_{\infty}|^{\downarrow}$, we get that for all $x$ sufficiently
large, $\left|\E{f(|O_x|^{\downarrow})}-
  \E{f(|O_{\infty}|^{\downarrow})}\right| \leq \epsilon$. The result
follows.
\end{proof}


\section{Behavior of the last fragment}
\label{seclastfrag}

In this section, we prove the results stated in Corollary
\ref{lastfrag} and the remark which follows it.

First, it is implicit in the proofs in the previous sections that the
distribution of the length of the interval component of $\{y\in
\mathbb R :H_{x}(y)<1\}$ that contains $0$ (i.e.\ the distribution of
$xF_{*}((\zeta-x^{\alpha})^+)$) converges as $x \rightarrow \infty$ to
the distribution of the length of the interval component of $\{y \in
\R :H_{\infty}(y)<1\}$ that contains $0$.  By construction of the
function $H_{\infty}$ (see Section~\ref{sec:Williams}), this interval
component is distributed as $\eta(1)$.  Indeed, almost surely $1$ is
not one of the $t_i$'s and therefore $H_{\infty}(y)<1$ for every $y
\in (-\eta^-(1), \eta^+(1))$.  Moreover, it is easy to see that
$H_{\infty}(\eta^+(1))=H_{\infty}(\eta^-(1))=1$. In other words, we
have that
\[
t \left(F_{\ast}\left( (\zeta -t)^{+}\right)\right)^{\alpha}
\convlaw (\eta(1))^{\alpha} \text{ as $t \rightarrow 0$}.
\]
Recall then that $(\eta(1))^{\alpha}$ is distributed as
$\sigma^{\alpha}$ under $\mathbb N(\cdot | H_{\max}=1)$ which, by
Lemma~\ref{lem:sigmaH_max}, is easily seen to be distributed as the
$(-1/\alpha-1)$-size-biased version of $\zeta$ defined in
(\ref{eqn:biasedzeta}).

We now turn to the bounds given in (i) and (ii), Corollary
\ref{lastfrag}, for the tails of this size-biased version of $\zeta$.

(i) When $t \rightarrow \infty$, the given bounds are easy
consequences of Proposition 14 of \cite{HaasLossMass} on the
asymptotic behavior of $\Prob{\zeta>t}$ as $t \rightarrow
\infty$. Indeed, according to that result, there exist $A,B>0$
such that, for all $t$ large enough,
\[
\exp(-B\Psi(t)) \leq \Prob{\zeta \geq t} \leq \exp(-A\Psi(t)),
\]
where $\Psi$ is the inverse of the bijection $t\in[1,\infty)
\rightarrow t/\phi(t) \in [1/\phi(1),\infty)$ and $\phi$ is defined at
(\ref{eqn:Laplacetagged}).  Miermont \cite[Section 3.2]{Miermont}
shows that in the case of the stable fragmentation,
\[
\phi(t)=(1+\alpha)^{-1}\frac{\Gamma(t-\alpha)}{\Gamma(t)}.
\] 
Now let $c$ be a positive constant that may vary from line to
line. Using the fact that $\Gamma(z)$ is proportional to
$z^{z-1/2}\exp(-z)$ for large $z$, we get that $\phi(t)\sim
ct^{-\alpha}$ as $t \rightarrow \infty$.  So $\Psi(t)\sim
ct^{1/(1+\alpha)}$ as $t \rightarrow \infty$.  We just need to convert the
statements about the tails of $\zeta$ into statements about the tails
of $\zeta_{*^{\alpha}}$. On the one hand, note that
\[
\Prob{\zeta_{*^{\alpha}} \geq t} 
=  \frac{\E{\zeta^{-1/\alpha-1}\mathbbm{1}_{\{\zeta \geq t\}}}}{\E{\zeta^{-1/\alpha-1}}}
\geq \frac{t^{-1/\alpha-1} \Prob{\zeta \geq t} }{\E{\zeta^{-1/\alpha-1}}}
\geq \frac{\exp(-ct^{1/(1+\alpha)})}{\E{\zeta^{-1/\alpha-1}}} , 
\]
for $t$ sufficiently large. On the other hand, using the Cauchy-Schwarz
inequality and the fact that $\zeta$ has positive moments of all
orders, we easily get that
\[
\Prob{\zeta_{*^{\alpha}} \geq t}
=  \frac{\E{\zeta^{-1/\alpha-1}\mathbbm{1}_{\{\zeta \geq t\}}}}{\E{\zeta^{-1/\alpha-1}}}
\leq \frac{c \ \Prob{\zeta \geq t}^{1/2}}{\E{\zeta^{-1/\alpha-1}}} ,
\]
for all $t \geq 0$.  The result follows immediately.

(ii)  The second assertion is as straightforward to prove. First,
\begin{equation} \label{eqn:zetastar}
\Prob{\zeta_{*^{\alpha}} \leq t}
=  \frac{\E{\zeta^{-1/\alpha-1}\mathbbm{1}_{\{\zeta \leq t\}}}}{\E{\zeta^{-1/\alpha-1}}}
\leq \frac{t^{-1/\alpha-1} \Prob{\zeta \leq t}}{\E{\zeta^{-1/\alpha-1}}} . 
\end{equation}
In Section 4.2.1 of \cite{HaasLossMass} it is proved that for all
$q<1+\underline{p}/(-\alpha)$, $\Prob{\zeta \leq t} \leq t^{q}$ for
small $t$, where
\[
\underline{p}=\sup\left\{q \geq 0: \int_{(1,\infty)} \exp(qx)
L(\mathrm d x)<\infty \right\}.
\] 
Here, $L$ is the L\'evy measure of the subordinator $\xi$ with Laplace
exponent $\phi$ (see the beginning of Section~\ref{sec:proof} for the
definition). Using Miermont's results \cite[Section 3.2]{Miermont}
again, the measure $L$ is proportional to
$\exp(x)(\exp(x)-1)^{\alpha-1}\mathbbm{1}_{\{ x>0\}}\mathrm d x$.  It
follows that $\underline{p}=-\alpha$.  Combining this with
(\ref{eqn:zetastar}) gives the desired result.

\bigskip

We finish this section with the proof of equation (\ref{eqPhi}), which
is based on the fact that $\Phi(\lambda):= \E{\exp{(-\lambda
    \eta(1))}}=\E{\exp{(-
    \eta(\lambda^{-\alpha}))}}$, $\lambda \geq 0$.  We recall that it is assumed that $\alpha \in (-1/2,0)$, i.e. $\beta \in (1,2)$. Using the Poisson construction of
$\eta(1)$, the expression for the Laplace transform of a subordinator,
the self-similarity of the process $\eta$ and Campbell's formula for
Poisson point processes, we obtain
\begin{align*}
\Phi(\lambda) 
& = \E{\E{\exp \left(-\sum_{i: t_i
\leq \lambda^{-\alpha}} \left(\tau_+^{(i)}((1-v_i)r_i)+\tau_-^{(i)}(v_ir_i)\right)
 \right) \Bigg| v_i,r_i,t_i,i \geq 1}} \\ 
&= \E{\exp \left(-\sum_{i: t_i \leq \lambda^{-\alpha}} r_i
    \int_0^{\infty}(1-e^{- u}) \mathbb N[\sigma \in \mathrm d
    u, H_{\mathrm{max}}\leq t_i]\right)}  \\
&= \E{\exp \left(-(\beta-1)^{\beta/(\beta-1)} \sum_{i: t_i \leq \lambda^{-\alpha}} r_i
    \int_0^{t_i} \int_0^{\infty}(1-e^{- u}) \mathbb N[\sigma
    \in \mathrm d u | H_{\mathrm{max}}=v] v^{\beta/(1-\beta)} \mathrm
    d v \right) } \\ 
&= \E{\exp \left(-(\beta-1)^{\beta/(\beta-1)} \sum_{i: t_i \leq \lambda^{-\alpha}} r_i
    \int_0^{t_i} \E{1-e^{- \eta(v)}}
    v^{\beta/(1-\beta)}  \mathrm d v \right)} \\ 
&= \E{\exp \left(-(\beta-1)^{\beta/(\beta-1)} \sum_{i: t_i \leq \lambda^{-\alpha}} r_i
    \int_0^{t_i} (1-\Phi ( v^{-1/\alpha}))
    v^{\beta/(1-\beta)}  \mathrm d v \right)} \\
&=  \exp \left(\!\! - \int_{\mathbb R^+ \times [0,\lambda^{-\alpha}]} \! \! 
     (1-e^{-(\beta-1)^{\beta/(\beta-1)}r
    \int_0^t (1-\Phi( v^{-1/ \alpha})) v^{\beta/(1-\beta)}
    \mathrm d v})  \tfrac{\beta(\beta-1)}{\Gamma(2-\beta)}
  e^{-c_{\beta}rt^{1/(1-\beta)}} r^{-\beta} \mathrm d r \mathrm dt \right),
\end{align*}
where $c_{\beta}=(\beta-1)^{1/(1-\beta)}$.  Substituting $\beta = (1+
\alpha)^{-1}$, we obtain the desired expression.


\section{Almost sure logarithmic asymptotics}
\label{SectionLog} 

The following result describes the almost sure logarithmic behavior
near the extinction time of the largest fragment and the last fragment
processes. It is valid for general self-similar fragmentations with
parameters $\alpha<0$, $c=0$ and $\nu(\sum_{i=1}^{\infty} s_i<1)=0.$ We
recall that for such fragmentations, the extinction time $\zeta$
possesses exponential moments (see \cite[Prop. 14]{HaasLossMass}).

\begin{theorem}\label{thm:largestfrag}
  (i) Suppose there exists $\eta>0$ such that
  $\int_{\Sfl}s_1^{-\eta}\mathbf1_{\{s_1< 1/2 \}}\nu(\mathrm d \mathbf
  s)<\infty$. Then,
\begin{equation*}
\frac{\log (F_{1}((\zeta -t)^{+}))}{\log(t)}
\overset{\mathrm{a.s.}}{\rightarrow } - 1 / \alpha \text{ as $t
\rightarrow 0$.}
\end{equation*}

\noindent (ii) If, moreover, $\alpha \geq -\gamma_{\nu }:=-\inf
\{\gamma>0 :\lim_{\varepsilon \rightarrow 0} \varepsilon^{\gamma} \nu
(s_{1}<1-\varepsilon ) = 0\}$, then the last fragment process $F_*$ is
well-defined (i.e. the fragmentation $F$ can be encoded into an interval fragmentation for which there exists a unique point $x \in (0,1)$ which is
reduced to dust at time $\zeta$) and
\begin{equation*}
\frac{\log (F_{\ast}((\zeta -t)^{+}))}{\log
(t)}\overset{\mathrm{a.s.}}{\rightarrow }-1/\alpha \text{ as $t
\to 0$.}
\end{equation*}
\end{theorem}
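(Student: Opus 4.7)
The plan is to establish, almost surely, the two-sided sandwich
\[
t^{-1/\alpha+\epsilon} \leq F_1((\zeta-t)^+) \leq t^{-1/\alpha-\epsilon}
\]
eventually as $t \to 0$, for every $\epsilon > 0$; dividing by $\log t < 0$ then yields the claimed convergence of the ratio to $-1/\alpha$. The key tool is the \emph{branching identity} obtained by applying the branching property at a deterministic time $s$: with $(m_i(s))_{i \geq 1}$ the ranked block masses and $(\zeta^{(i)})_{i \geq 1}$ i.i.d.\ copies of $\zeta$ independent of $\mathcal{F}_s$,
\[
\zeta - s \;=\; \sup_{i \geq 1} m_i(s)^{-\alpha}\,\zeta^{(i)}.
\]

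For the upper bound on $F_1$, the event $\{F_1(s) \geq M\}$ forces $\zeta^{(1)} \leq (\zeta - s)M^{\alpha}$ (since $m_1^{-\alpha}\zeta^{(1)}$ is one of the terms in the supremum and $m_1 \geq M$ with $\alpha < 0$). Choosing $M = t^{-1/\alpha - \epsilon}$ gives $tM^{\alpha} = t^{|\alpha|\epsilon}$, and hence
\[
\Prob{F_1(s) \geq M,\ \zeta - s \leq t} \leq \Prob{F_1(s) \geq M}\cdot \Prob{\zeta \leq t^{|\alpha|\epsilon}}.
\]
The moment condition $\int s_1^{-\eta}\I{s_1 < 1/2}\nu(\mathrm{d}\mathbf{s}) < \infty$ gives a polynomial left-tail $\Prob{\zeta \leq u} \leq u^q$ with $q$ growing in $\eta$, by arguments analogous to Section 4.2.1 of \cite{HaasLossMass}. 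Using the fact that $F_1(\cdot)$ is non-increasing in its argument (blocks only split, which can only decrease $F_1$), one relates $\Prob{F_1((\zeta-t_n)^+) \geq M_n}$ to the Fubini integral $\int \Prob{F_1(s) \geq M_n,\ \zeta \in [s, s+t_n]}\,\mathrm{d}s$; combined with the upper tail of $\zeta$ to control the divergent part of the $s$-integral, this yields a bound summable along a geometric subsequence $t_n = 2^{-n}$. Borel--Cantelli plus monotone interpolation extend the bound to all sufficiently small $t$.

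For the lower bound on $F_1$, the same identity at $s = \zeta - t$ forces some winning index $j$ to attain the supremum, so that $m_j((\zeta-t)^+) = t^{-1/\alpha}(\zeta^{(j)})^{1/\alpha}$ and hence $F_1 \geq m_j \geq t^{-1/\alpha + \epsilon}$ unless $\zeta^{(j)} \geq t^{-|\alpha|\epsilon}$, an event ruled out with overwhelming probability by the stretched-exponential upper tail of $\zeta$ (Proposition 14 of \cite{HaasLossMass}). A union bound over the candidate winners, whose effective number is controlled by the moment assumption on $\nu$, together with a Borel--Cantelli argument along a geometric subsequence, gives the almost-sure bound. For Part (ii), the well-definition of $x_*$ under $\alpha \geq -\gamma_\nu$ follows from the regularity results on the extinction set in \cite{HaasRegularity}; then $F_* \leq F_1$ transfers the liminf direction of the ratio from $F_1$ to $F_*$, and the matching limsup direction comes from a Markovian analysis of $F_*$, where the block carrying $x_*$ always selects the descendant whose local extinction clock is largest. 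This induces a time-changed size-biased subordinator to which a standard SLLN applies, yielding rate $-1/\alpha$.

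\emph{Main obstacle.} The delicate step is the lower bound on $F_1$. The branching identity only guarantees \emph{some} block of size $\asymp t^{-1/\alpha}$ at time $\zeta - t$, but one must rule out \emph{almost surely} (not merely in distribution, as in Corollary~\ref{corostablerank}) the scenario in which the winning block's local clock $\zeta^{(j)}$ is anomalously large. Because the winning index $j$ depends on the whole history, $\zeta^{(j)}$ is not independent of the selection, so the argument requires a careful quantitative control of the number of candidate winners for which the moment condition on $\nu$ is essential. Carrying this through in the full generality of (i) — without resorting to the existence of $x_*$, which is only guaranteed under the stronger assumption of (ii) — is the most technical part of the proof.
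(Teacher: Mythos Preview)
Your overall architecture (Borel--Cantelli along a geometric sequence, branching identity, tail bounds on $\zeta$) is the same as the paper's, but the execution has a genuine gap and the role of the hypothesis is misidentified.

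\textbf{The random-time problem.} You apply the branching identity ``at $s=\zeta-t$'' for the lower bound. But $\zeta-t$ is not a stopping time (it depends on the entire future), so the representation $\zeta-s=\sup_j m_j(s)^{-\alpha}\zeta^{(j)}$ with the $\zeta^{(j)}$ independent of $\mathcal{F}_s$ is not available there. The paper avoids this by introducing the \emph{stopping times} $T_1=\inf\{t:F_1(t)\le e^{(\alpha^{-1}+\epsilon)i}\}$ and $T_2=\inf\{t:F_1(t)\le e^{(\alpha^{-1}-\epsilon)i}\}$, and using the equivalence (valid because $F_1$ is non-increasing)
\[
\{F_1((\zeta-e^{-i})^+)>e^{(\alpha^{-1}+\epsilon)i}\}=\{T_1>\zeta-e^{-i}\}=\{\zeta-T_1<e^{-i}\},
\]
then applying the strong branching property at $T_1$ (resp.\ $T_2$). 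This also obviates your vague ``Fubini integral'' step for the upper bound; once you work at $T_1$ and $T_2$, both directions become clean one-line estimates.

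\textbf{Where the moment condition is really used.} You invoke $\int s_1^{-\eta}\I{s_1<1/2}\nu(\mathrm d\mathbf s)<\infty$ to get a polynomial left tail for $\zeta$, and again to ``control the number of candidate winners'' in the lower bound. Neither is where the hypothesis enters in the paper. The left tail $\E{\zeta^{-\gamma}}<\infty$ for $0<\gamma<1$ comes for free from the tagged-fragment representation (Carmona--Petit--Yor), and the lower bound (BC2) is a plain union bound plus $\E{\zeta^q}<\infty$: no condition on $\nu$ needed. The moment hypothesis is needed \emph{only} for the upper bound (BC1), and for a different reason: at $T_1$ one has $F_1(T_1)=F_1(T_1-)\Delta(T_1)$ with $F_1(T_1-)\ge e^{(\alpha^{-1}+\epsilon)i}$, but $\Delta(T_1)$ can be arbitrarily small if $\nu$ puts mass near $s_1=0$. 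The paper controls $\mathbb P(\Delta(T_1)<1/2,\ldots)$ by a Doob--Meyer compensator argument: the sum $\sum_{t\in\mathcal D_1}s_1(t)^{\alpha\gamma}\I{s_1(t)<1/2}\I{F_1(t-)\ge\cdot}$ has compensator proportional to $\int s_1^{\alpha\gamma}\I{s_1<1/2}\nu(\mathrm d\mathbf s)\cdot\int_0^\infty F_1(r)^\alpha\I{F_1(r)\ge\cdot}\mathrm dr$, and finiteness of the first factor for small $\gamma$ is exactly the hypothesis.

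\textbf{Part (ii).} Your plan for the limsup of $F_*$ via a ``time-changed size-biased subordinator and SLLN'' is not the paper's route and would need substantial work to make rigorous (the block containing $x_*$ does not evolve as a nice Markov process in an obvious way). The paper instead uses that under $\alpha\ge-\gamma_\nu$ the encoding function $G$ is $\gamma$-H\"older for every $\gamma<-\alpha$ (Haas--Miermont), whence $t=G(x_*)-G(x_t^\pm)\le C|x_*-x_t^\pm|^\gamma$ gives $F_*(\zeta-t)\ge 2(t/C)^{1/\gamma}$ directly. Uniqueness of $x_*$ is proved separately, by showing $\zeta$ has no atoms (Kesten's theorem for the tagged-fragment subordinator), not by citing regularity of the extinction set.
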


\bigskip

\begin{corollary} 
\label{corologstable}
For the stable fragmentation with index $-1/2 \leq \alpha < 0$ (or,
equivalently, $1<\beta \leq 2$), with probability $1$,
\begin{equation*}
\lim_{t \rightarrow 0}\frac{\log (F_{1}((\zeta -t)^{+}))}{\log(t)}
=\lim_{t \rightarrow 0} \frac{\log (F_{\ast}((\zeta -t)^{+}))}{\log
(t)}= -\frac{1}{\alpha} = \frac{\beta}{\beta-1}.
\end{equation*}
\end{corollary}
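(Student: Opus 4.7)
The strategy is to verify the two hypotheses of Theorem~\ref{thm:largestfrag} for the stable dislocation measure $\nu$ and then invoke that theorem directly. The stable fragmentation has $c = 0$, $\nu(\sum s_i < 1) = 0$ and index $\alpha = 1/\beta - 1 < 0$, so no further structural check is required, and the common limit $-1/\alpha = \beta/(\beta-1)$ is precisely the one delivered by Theorem~\ref{thm:largestfrag}.

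\textbf{The Brownian case} ($\beta = 2$, $\alpha = -1/2$) is immediate. The explicit formula $\nu(s_1 \in dx) = 2(2\pi x^3(1-x)^3)^{-1/2}\mathbf{1}_{[1/2,1)}(x)\,dx$ recalled in Section~\ref{sec:Brownian} is supported on $\{s_1 \geq 1/2\}$, so the integral in hypothesis (i) vanishes trivially. An asymptotic analysis of the density near $x = 1$ yields $\nu(s_1 < 1 - \epsilon) \sim c\,\epsilon^{-1/2}$ as $\epsilon \to 0$, hence $\gamma_\nu = 1/2 = -\alpha$ and hypothesis (ii) holds.

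\textbf{The case $\beta \in (1, 2)$.} Here I would work from the subordinator representation
\[
\int_{\Sfl} f(\mathbf{s})\,\nu(d\mathbf{s}) = c_\beta\,\E{T_1\, f\!\left(T_1^{-1} \Delta T_{[0,1]}\right)},
\]
where $T$ is a $1/\beta$-stable subordinator with ranked jumps $X_1 \geq X_2 \geq \cdots$ on $[0,1]$ and $S = T_1$. Hypothesis (i) reduces to the finiteness of $\E{S^{1+\eta} X_1^{-\eta}\mathbf{1}_{\{X_1 < S/2\}}}$ for some $\eta > 0$: the restriction to $\{X_1 < S/2\}$ forces several comparable jumps, and via the Poisson-Dirichlet-type structure of the normalized jumps this prevents $X_1$ from being too small relative to $S$, yielding integrability for $\eta$ small. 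For hypothesis (ii), the same representation gives $\nu(s_1 < 1 - \epsilon) = c_\beta\,\E{S\,\mathbf{1}_{\{S - X_1 > \epsilon S\}}}$; using the L\'evy measure $\propto x^{-1/\beta-1}\,dx$ of $T$, together with the $\beta$-scaling $T_t \equidist t^\beta T_1$, one estimates that a second jump comparable to $\epsilon S$ has probability of order $\epsilon^{-(1-1/\beta)}$, leading to $\nu(s_1 < 1 - \epsilon) = O(\epsilon^{-(1-1/\beta)})$ and hence $\gamma_\nu \geq 1 - 1/\beta = -\alpha$.

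\textbf{Main obstacle.} The technical core is the analysis of the joint law of $(S, X_1)$ under the stable subordinator, in particular controlling the negative moment of $X_1/S$ on the event $\{X_1 < S/2\}$ required for hypothesis (i) and pinning down the precise order of $\nu(s_1 < 1-\epsilon)$ required for hypothesis (ii). Once these two estimates are secured, Theorem~\ref{thm:largestfrag}(i)--(ii) apply verbatim and deliver the stated almost-sure logarithmic limits for both $F_1$ and $F_\ast$.
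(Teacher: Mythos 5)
Your overall strategy is identical to the paper's: reduce to Theorem~\ref{thm:largestfrag} by verifying its two hypotheses on $\nu$. The paper, however, discharges both hypotheses by citation --- Haas and Miermont~\cite{Haas/Miermont} (Section~3.5) shows $\int_{\Sfl} s_1^{-1}\I{s_1<1/2}\nu(\mathrm d\mathbf s)<\infty$, which gives hypothesis (i) with $\eta=1$, and Section~4.4 of the same reference shows $\gamma_\nu = -\alpha$, which gives hypothesis (ii) --- whereas you attempt to re-derive these estimates from the subordinator representation of $\nu$. The problem is that you do not actually carry out the derivation for $\beta \in (1,2)$: the negative-moment bound needed for (i) and the two-sided asymptotic for $\nu(s_1<1-\epsilon)$ needed for (ii) are flagged as the ``main obstacle'' and left open. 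Your Brownian case is correct and complete; the stable case is a plan, not a proof.

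There is also a directional error in the way you argue hypothesis (ii). You write that ``$\nu(s_1<1-\epsilon) = O(\epsilon^{-(1-1/\beta)})$ and hence $\gamma_\nu \geq 1-1/\beta$.'' An upper bound $\nu(s_1<1-\epsilon)\leq C\epsilon^{-(1-1/\beta)}$ gives $\epsilon^\gamma \nu(s_1<1-\epsilon)\to 0$ whenever $\gamma > 1-1/\beta$, which yields $\gamma_\nu \leq 1-1/\beta$, not $\geq$. But hypothesis (ii) is the inequality $\alpha \geq -\gamma_\nu$, i.e.\ $\gamma_\nu \geq -\alpha = 1-1/\beta$, which requires a \emph{lower} bound $\nu(s_1<1-\epsilon)\gtrsim \epsilon^{-(1-1/\beta)}$. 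So even modulo the missing computations, the logic as written would not close the argument; you need the matching lower bound on $\nu(s_1<1-\epsilon)$ (or a direct proof that $\gamma_\nu = -\alpha$), which is exactly what the cited result of Haas and Miermont supplies.
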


\bigskip

\begin{proof}[Proof of Theorem \ref{thm:largestfrag}] (i) We first
  prove this result in the case where there exists some real number
  $a>0$ such that $\nu(s_1<a)=0.$ We will then explain how to adapt it
  to the more general assumption $\int_{\Sfl}s_1^{-\eta}\mathbf
  1_{\{s_1< 1/2\}}\nu(\mathrm d \mathbf s)<\infty$.  By the first
  Borel-Cantelli lemma, it is sufficient to show that, for any
  $\epsilon > 0$,
\begin{align}
\sum_{i=1}^{\infty} \Prob{ F_1((\zeta - e^{-i})^{+}) >
\exp((\alpha^{-1} + \epsilon) i )}
& < \infty  \label{eqn:BC1} \\
\sum_{i=1}^{\infty} \Prob{ F_1((\zeta - e^{-i})^{+}) \leq
\exp((\alpha^{-1} - \epsilon) i) } & < \infty.  \label{eqn:BC2}
\end{align}
(Note that if $i^{-1}\log (F_{1}((\zeta -e^{-i})^{+}))$ converges
almost surely to $1/\alpha$ as $i \to \infty$, then almost surely for all
sequences $(t_n,n\geq 0)$ converging to 0, $\log (F_{1}((\zeta
-t_n)^{+})) / \log(t_n)$ converges to $-1/\alpha$, since
\[
\frac{F_1((\zeta - e^{-(i_n+1)})^{+})}{-i_n} \leq \frac{F_1((\zeta -
  t_n)^{+})}{\log (t_n)} \leq \frac{F_1((\zeta - e^{-i_n})^{+})}{-(i_{n}+1)},
\]
where $i_n = \fl{-\log(t_n)}$).

Let $\mathcal{F}_t = \sigma(F(s): s \leq t)$ and suppose that $T$ is a
$(\mathcal{F}_t)_{t \geq 0}$-stopping time such that $T < \zeta$
a.s. According to \cite{BertoinSSF}, the branching and self-similarity
properties of $F$ hold for $(\mathcal{F}_t)_{t \geq 0}$-stopping
times, hence
\[
\zeta - T = \sup_{j \geq 1} \left\{
F_j(T)^{-\alpha} \zeta_j \right\},
\]
where $(\zeta_j, j \geq 1)$ are independent copies of $\zeta$,
independent of $F(T)$.  Let
\begin{align*}
T_1 & = \inf\{t \geq 0: F_1(t) \leq \exp((\alpha^{-1} + \epsilon)i) \\
T_2 & = \inf\{t \geq 0: F_1(t) \leq \exp((\alpha^{-1} -
\epsilon)i).
\end{align*}
We start by proving (\ref{eqn:BC1}).  We have
\begin{align*}
\Prob{F_1((\zeta - e^{-i})^+) > \exp((\alpha^{-1} + \epsilon)i)}
& = \Prob{T_1 > \zeta - e^{-i}} \\
& = \Prob{\sup_{j \geq 1} F_j(T_1)^{-\alpha} \zeta_j < e^{-i}} \\
& \leq \Prob{F_1(T_1)^{-\alpha} \zeta_1 < e^{-i}}.\end{align*}
Since we have assumed that there exists $a>0$ such that
$\nu(s_1<a)=0$, we have $F_1(T_1) \geq aF_1(T_1-) \geq a
\exp((\alpha^{-1} + \epsilon)i)$ a.s., and so
\begin{align}
\label{key}
\Prob{F_1(T_1)^{-\alpha} \zeta_1 < e^{-i}} & \leq
\Prob{a^{-\alpha}
\exp(-\alpha(\alpha^{-1} + \epsilon)i) \zeta <e^{-i}} \\
\nonumber & = \Prob{\zeta < a^{\alpha} e^{\alpha \epsilon i}}.
\end{align}
Let $(\lambda(t))_{t \geq 0}$ be the size of a tagged fragment as
defined at the beginning of Section \ref{sec:proof}, and let $\xi$ be
the related subordinator. Then consider the first time at which
$\lambda$ reaches $0$, i.e.
\[
\sigma = \inf\{t \geq 0 : \lambda(t) = 0\}=\int_0^{\infty}
\exp(\alpha\xi(r))\mathrm d r.
\]
Of course, $\sigma \leq \zeta$.  Moreover, by Proposition 3.1 of
Carmona, Petit and Yor~\cite{Carmona/Petit/Yor}, $\sigma$ has moments
of all orders strictly greater than $-1$; this implies, in particular, that $\mathbb
E[\zeta^{-\gamma}]<\infty$ for $0<\gamma<1$. So, by Markov's inequality, we have that
\[
\Prob{F_1((\zeta - e^{-i})^+) > \exp((\alpha^{-1} + \epsilon)i)} \leq
a^{\alpha / 2} e^{\alpha \epsilon i / 2} \E{\zeta^{-1/2}},
\]
which is summable in $i$.

Now turn to (\ref{eqn:BC2}).  Arguing as before, we have
\begin{align*}
\Prob{F_1((\zeta - e^{-i})^+) \leq \exp((\alpha^{-1} - \epsilon)i)}
& = \Prob{T_2 \leq \zeta - e^{-i}} \\
& = \Prob{\sup_{j \geq 1} F_j(T_2)^{-\alpha} \zeta_j \geq e^{-i}}.
\end{align*}
Take $q > -1/\alpha$.  Then, since $\zeta_1, \zeta_2, \ldots$ are
independent and identically distributed and independent of
$F(T_2)$,
\begin{align*}
\Prob{\sup_{j \geq 1} F_j(T_2)^{-\alpha} \zeta_j \geq e^{-i}}
& \leq \sum_{j \geq 1} \Prob{F_j(T_2)^{-\alpha} \zeta_j \geq e^{-i}/2} \\
& \leq 2^q\E{\zeta^q} e^{iq} \E{ \sum_{j \geq 1} F_j(T_2)^{-\alpha
q}}.
\end{align*}
The expectation $\E{\zeta^q}$ is finite and, since $-\alpha q - 1
> 0$, we have
\[
\sum_{j \geq 1} F_j(T_2)^{-\alpha q} \leq F_1(T_2)^{-\alpha q - 1}
\sum_{j \geq 1} F_j(T_2) \leq F_1(T_2)^{-\alpha q - 1}.
\]
But then
\[
\Prob{F_1((\zeta - e^{-i})^+) \leq \exp((\alpha^{-1} - \epsilon)i)} \leq
2^q\E{\zeta^q} \exp(i(\epsilon \alpha q - \alpha^{-1} + \epsilon)),
\]
which is summable in $i$ for large enough $q$.

\bigskip

It remains to adapt this proof to the more general case where
$\int_{\Sfl}s_1^{-\eta}\mathbf 1_{\{s_1<
  1/2\}}\nu(\mathrm d \mathbf s)<\infty$ for some $\eta>0$. The key
inequality (\ref{key}) is then no longer valid and we have to check
that $\Prob{F_1(T_1)^{-\alpha} \zeta_1 < e^{-i}}$ is still summable in
$i$. The rest of the proof remains unchanged. So, denote by
$\Delta(T_1)$ the size of the ``multiplicative" jump of $F_1$ at time
$T_1$ (i.e.\ $\Delta(T_1) := F_1(T_1)/ F_1(T_1-)$) and recall that
$\zeta_1$ denotes a random variable with the same distribution as
$\zeta$ and independent of $F_1(T_1)$. Note that we may, and will,
suppose that $\zeta_1$ is independent of the whole fragmentation $F$.
Then,
\begin{align}
\nonumber
&\mathbb P\left(F_1^{-\alpha}(T_1)\zeta_1<e^{-i}\right)\\
\nonumber
& = \mathbb
P\left(F_1^{-\alpha}(T_1-)(\Delta(T_1))^{-\alpha}\zeta_1<e^{-i}\right)
\\ 
\nonumber 
&\leq \mathbb P\left(e^{-\alpha(\alpha^{-1}+\epsilon)i}
  (\Delta(T_1))^{-\alpha}\zeta_1<e^{-i}\right) \\
\label{comp}
&=\mathbb P\left((\Delta(T_1))^{-\alpha}\zeta_1<e^{\alpha \epsilon
    i},\Delta(T_1)<1/2\right)
+\mathbb P\left(( \Delta(T_1))^{-\alpha}\zeta_1<e^{\alpha \epsilon
    i},\Delta(T_1)\geq1/2\right).
\end{align}
The second term in this last line is bounded from above, for all $\gamma>0$, by
\[
e^{\gamma \alpha \epsilon i} \mathbb E[\zeta^{-\gamma}]2^{-\alpha\gamma},
\]
which is finite and summable in $i$ if we take $0<\gamma<1$.  To bound
the first term in (\ref{comp}), introduce $\mathcal
D_1$, the set of jump times of $F_1$. For $t \in \mathcal D_1$, let
$\mathbf{s}(t)$ be the relative mass frequencies obtained by the
dislocation of $F_1(t-)$, and let $\Delta(t):=F_1(t)/F_1(t-)$.  Since the largest fragment coming from $F_1$ at the time of a split may not be the largest block overall after the split,
$s_1(t)\leq \Delta(t)$. Then,
\begin{align*}
&\mathbb P((\Delta(T_1)^{-\alpha}\zeta_1<e^{\alpha \epsilon i},\Delta(T_1)<1/2) \\
&=\mathbb E\left[\sum_{t \in \mathcal D_1}
 \I{(\Delta(t))^{-\alpha}\zeta_1<e^{\alpha \epsilon
      i},\Delta(t)<1/2} \I{F_1(t-) \geq
    e^{(\alpha^{-1}+\epsilon)i}, F_1(t)\leq
    e^{(\alpha^{-1}+\epsilon)i} }\right] \\ 
&\leq e^{\gamma \alpha \epsilon i}\mathbb E\left[\sum_{t \in \mathcal
    D_1}(s_1(t))^{\alpha\gamma} \I{s_1(t)<1/2\}} \I{F_1(t-) \geq
    e^{(\alpha^{-1}+\epsilon)i} }\right] \mathbb E[\zeta_1^{-\gamma}]. 
\end{align*}
for all $\gamma>0$.  The expectation $\mathbb E[\zeta_1^{-\gamma}] $
is finite when $\gamma<1$, which we assume for the rest of the
proof.  Now, the process $(\Sigma(u),u \geq 0)$ defined
by 
\[
\Sigma(u)=\sum_{t \in \mathcal D_1, t \leq u}(s_1(t))^{\alpha \gamma}
\I{s_1(t)<1/2} \I{F_1(t-) \geq e^{(\alpha^{-1}+\epsilon)i}}
\] 
is increasing, c\`adl\`ag, and adapted to the filtration $(\mathcal
F_t,t \geq 0)$ (and hence optional).  So, according to the Doob-Meyer
decomposition \cite[Section VI]{Rogers/Williams2}, \cite[Theorem 1.8,
Chapter 2]{Jacod/Shiryaev}, it possesses an increasing predictable
compensator $(A(u),u \geq 0)$ such that $\mathbb
E[\Sigma(\infty)]=\mathbb E[A(\infty)]$. Moreover, since the
fragmentation process is a pure jump process where each block of size
$m$ splits at rate $m^{\alpha}\nu(\mathrm d \mathbf s)$ into blocks of
lengths $(ms_1,ms_2,...)$, this compensator is given by
\[
 A(u) = \int_0^{u} F_1(t)^{\alpha}\I{F_1(t) \geq
   e^{(\alpha^{-1}+\epsilon)i} } \mathrm dt    \int_{\Sfl}s_1^{\alpha
   \gamma} \I{s_1<1/2} \nu (\mathrm d \mathbf s), \quad u \geq 0.
\]
The second integral in this product is finite for small enough
$\gamma>0$, by assumption. The first is clearly smaller than
$e^{-\delta(\alpha^{-1}+\epsilon)i} \int_0^{s} F_1(t)^{\alpha+\delta}
\mathbf 1_{\{F_1(t)>0\}}\mathrm dt $, for all $\delta \geq 0$. This
leads us to
\begin{equation}
\label{eqmaj}
\mathbb P\left((\Delta(T_1))^{-\alpha}\zeta_1<e^{\alpha \epsilon
  i},\Delta(T_1)<1/2\right) \leq C_{\gamma}e^{(\gamma \alpha \epsilon
-\delta(\alpha^{-1}+\epsilon)) i}  \mathbb E\left [\int_0^{\zeta}
F_1(r)^{\alpha+\delta} \mathrm dr\right], 
\end{equation}
where $C_{\gamma}$ is a constant depending only on $\gamma>0$ and
which is finite provided $\gamma$ is sufficiently small.  To finish,
we claim that for all $0<\delta<-\alpha$, $\mathbb E[\int_0^{\zeta}
F_1(r)^{\alpha+\delta} \mathrm dr]<\infty$ (note this finiteness is
obvious for $\delta \geq -\alpha$, since $\mathbb
E[\zeta]<\infty$). Indeed, from Bertoin \cite{BertoinSSF}, we know
that the $\alpha$-self-similar fragmentation process $F$ (and its
interval counterpart) can be transformed through (somewhat
complicated) time-changes into a $(-\delta)$-self-similar
fragmentation process with same dislocation measure. We refer to
Bertoin's paper for details. In particular, if $|O_x(t)|$ denotes the
length of the fragment containing $x \in (0,1)$ at time $t$ in the
interval $\alpha$-self-similar fragmentation, and if $\zeta_x$ denotes
the first time at which this length reaches 0, we have
 \[
 \int_0^{\zeta_x} |O_x(r)|^{\alpha+\delta}\mathrm
 dr=\zeta_x^{(\delta)} \leq \zeta^{(\delta)}, 
 \]
 where $\zeta_x^{(\delta)}$ is the time at which the point $x$ is
 reduced to dust in the fragmentation with parameter $-\delta$ and
 $\zeta^{(\delta)}:=\sup_x \zeta_x^{(\delta)}$ is the time at which
 the whole fragmentation with parameter $-\delta$ is reduced to
 dust. Now, since $\alpha+\delta<0$, we have $(F_1(r))^{\alpha+\delta}
 \leq |O_x(r)|^{\alpha+\delta}$ for all $x \in (0,1)$, $r \geq 0$ and,
 therefore,
\[
\mathbb E \left [\int_0^{\zeta} F_1(r)^{\alpha+\delta} \mathrm dr
\right]=\mathbb E \left [\sup_x \int_0^{\zeta_x}
  F_1(r)^{\alpha+\delta} \mathrm dr \right] \leq \mathbb E \left
  [\sup_x \int_0^{\zeta_x} |O_x(r)|^{\alpha+\delta}\mathrm dr \right]
\leq \mathbb E [ \zeta^{(\delta)}]<\infty.
\]
Hence, if we choose $\delta$ small enough that $\gamma \alpha
\epsilon-\delta(\alpha^{-1}+\epsilon)<0$, we get from (\ref{eqmaj})
that $\mathbb P((\Delta(T_1))^{-\alpha}\zeta_1<e^{\alpha \epsilon
  i},\Delta^{(1)}(T_1)<1/2)$ is summable in $i$.

\smallskip

(ii) The additional assumption on $\nu $ implies that it is infinite,
hence we know (see Haas and Miermont~\cite{Haas/Miermont}) that the
fragmentation can be encoded into a continuous function $G$ which is,
moreover, $\gamma$-H\"{o}lder, for all $\gamma <(-\alpha) \wedge
\gamma_{\nu}$ ($= -\alpha$ here). In particular, the maximum, $\zeta$,
of $G$ is attained for some $x \in (0,1)$. More precisely, we claim it
is attained at a \textit{unique} point, which is denoted by
$x_*$. See the end of the proof for an explanation of this
uniqueness. It implies, in particular, that the last fragment process
is well-defined: for each $t < \zeta$, we denote by $O_*(t)$ the
interval component of $\{x \in (0,1) : G(x)>t\}$ which contains
$x_*$, and by $F_*(t)$ the length of this interval. For $t<\zeta $,
let
\begin{align*}
x_{t}^{-} & = \sup\{x \leq x_*: G(x) \leq \zeta - t\} \\
x_{t}^{+} & = \inf\{x \geq x_*: G(x) \leq \zeta - t\},
\end{align*}
so that $O_{\ast }(\zeta -t)= (x_{t}^{-},x_{t}^{+})$.  Then, for all
$0\leq \gamma <-\alpha $, there exists some constant $C$ such that
\begin{align*}
t & = G(x_*)-G(x_{t}^{-})\leq C(x_*-x_{t}^{-})^{\gamma} \\
t & = G(x_*)-G(x_{t}^{+})\leq C(x_{t}^{+}-x_*)^{\gamma},
\end{align*}
and, consequently, $F_{\ast}(\zeta -t)=x_{t}^{+}-x_{t}^{-}\geq
2(t/C)^{1/\gamma}$.  This implies that
\begin{equation*}
\limsup_{t\rightarrow 0}\left( \frac{\log (F_{\ast}((\zeta
-t)^{+}))}{\log (t)} \right) \leq -1/\alpha .
\end{equation*}
For the liminf, use part (i) and the fact that $F_*(t) \leq F_1(t)$
for all $t \geq 0$.

Finally, we have to prove that there is a unique $x \in (0,1)$ such that
$G(x)=\zeta$. Note that
\begin{align*}
&\mathbb{P}\left( \exists t<\zeta :\text{at least two fragments present at
time }t\text{ die at }\zeta \right) \\
&=\mathbb{P}\left( \exists t\in \mathbb{Q}, t < \zeta :%
\text{at least two fragments present at time }t\text{ die at }\zeta \right)
\end{align*}
and this latter probability is equal to $0$ if, for all $t\in
\mathbb{Q}$,
\begin{align*}
&\mathbb{P}\left( \text{at least two fragments present at time }t\text{ die
at }\zeta, t <\zeta\right) \\
&=\mathbb{P}\left( \exists i \neq j:F_{i}^{-\alpha }(t)\zeta_i
=F_{j}^{-\alpha }(t)\zeta_j \text{ and }F_{i}(t)\neq 0\right)=0,
\end{align*}
where $\left( \zeta_i,\zeta_j \right) $ are independent and
distributed as $\zeta$, independently of $F(t)$.  Clearly, this is
satisfied if the distribution of $\zeta $ has no atoms. Now recall
that we are in the case where $\nu $ is infinite and suppose that
there exists $t>0$ such that $\mathbb{P(\zeta}=t\mathbb{)}>0$. Recall
also that, conditional on $u<\zeta$, $\zeta=u+\sup_{i \geq
  1}F_i(u)^{-\alpha} \zeta_i $ where $(\zeta_i, i \geq 1)$ are
independent copies of $\zeta$, independent of $F(u)$.  Moreover, the
supremum is actually a maximium, since we know there exists $x \in
(0,1)$ such that $G(x)=\zeta$.  Then for all $0 < u < t$,
\begin{align*}
&\mathbb{P}\left( \exists i:F_{i}^{-\alpha }(u)\zeta_i =t-u\right) >0 \\
&\Leftrightarrow \exists i:\mathbb{P}\left( F_{i}^{-\alpha }(u)\zeta
=t-u\right) >0\text{ (with }\zeta \text{ independent of }F(u)\text{)} \\
&\Leftrightarrow \mathbb{P}\left( \lambda ^{-\alpha }(u)\zeta =t-u\right) >0,
\end{align*}
where $\lambda $ denotes the tagged fragment process. 
Recall that $\lambda(u) = \exp(-\xi(\rho(u)))$, where $\xi$ is a subordinator with Laplace exponent given by (\ref{eqn:Laplacetagged}).  Now, for any $b > 0$,
\[
\Prob{\xi(\rho(u)) = b} \leq \Prob{\exists v \geq 0: \xi(v) = b}.
\]
But we know from Kesten's theorem (Proposition 1.9 in
Bertoin~\cite{BertoinStFl}) that the right-hand side is 0 because the
L\'evy measure of the subordinator $\xi$ is infinite and it has no
drift.
 Hence, $\mathbb{P}\left( \lambda ^{-\alpha }(u)\zeta
  =t-u\right) =0$ for all $0<u<t$, and we can deduce the claimed
uniqueness.
\end{proof}

\bigskip
\begin{proof}[Proof of Corollary \ref{corologstable}.] It has been
proved in Haas and Miermont \cite[Section 3.5]{Haas/Miermont} that the
dislocation measure $\nu$ of any stable fragmentation satisfies 
\[
\int_{\Sfl} s_1^{-1}\I{s_1<1/2} \nu (\mathrm d \mathbf s)<\infty.
\]
(Note that this is obvious in the Brownian case since the
fragmentation is binary and so $\nu(s_1<1/2)=0$.)  From \cite[Section
4.4]{Haas/Miermont}, we know that the parameter $\gamma_\nu$ (defined
in Theorem \ref{thm:largestfrag} (ii)) associated with the
dislocation measure $\nu$ of the stable fragmentation with index $\alpha$ is given
by $\gamma_{\nu}=-\alpha$. Hence, both assumptions of Theorem
\ref{thm:largestfrag} (i) and (ii) are satisfied.
\end{proof}


\noindent \textbf{Acknowledgments.}  We are grateful to Jean Bertoin
and Gr\'egory Miermont for helpful discussions.
C.~G.\ would like to
thank the SPINADA project for funding which facilited this
collaboration, and Pembroke College, Cambridge for its support during
part of the project.  C.~G.\ was funded by EPSRC Postdoctoral
Fellowship EP/D065755/1.


\bibliographystyle{abbrv}

\bibliography{frag}

\end{document}